\documentclass[a4paper,leqno,12pt]{article}


\usepackage[utf8]{inputenc}
\usepackage[T1]{fontenc}
\usepackage[english]{babel}


\usepackage{crimson}



\usepackage{geometry}

\usepackage{fancyhdr}

\usepackage{hyperref}
\hypersetup{
  colorlinks   = true, 
  urlcolor     = blue, 
  linkcolor    = blue, 
  citecolor   = black 
}


\usepackage[inline]{enumitem}

\usepackage{graphicx}


\usepackage{color}


\usepackage[intlimits]{amsmath}
\usepackage{amssymb, amsfonts}


\usepackage[thmmarks, amsmath, amsthm]{ntheorem}

\usepackage{MnSymbol}
\usepackage{mathbbol}

\usepackage{bbm}

\usepackage{mathrsfs}



\usepackage[all]{xy}



\geometry{left=2.5cm,right=2.5cm,top=2cm,bottom=2cm,includeheadfoot}


\pagestyle{plain}

\sloppy


\nonfrenchspacing


\numberwithin{equation}{section}
\newtheorem{theoremcounter}{theoremcounter}[section]



\theoremstyle{plain}

\newtheorem{corollary}[theoremcounter]{Corollary}
\newtheorem{lemma}[theoremcounter]{Lemma}
\newtheorem{proposition}[theoremcounter]{Proposition}

\newtheorem{theorem}[theoremcounter]{Theorem}

\theoremnumbering{Alph}
\newtheorem{introtheorem}{Theorem}


\theoremstyle{definition}

\newtheorem{definition}[theoremcounter]{Definition}

\theoremstyle{remark}

\newtheorem{notation}[theoremcounter]{Notation}

\newtheorem{remark}[theoremcounter]{Remark}






\usepackage{xargs}                      
\usepackage[pdftex,dvipsnames]{xcolor}  
\usepackage[colorinlistoftodos,prependcaption,textsize=tiny]{todonotes}
\newcommandx{\unsure}[2][1=]{\todo[linecolor=red,backgroundcolor=red!25,bordercolor=red,#1]{#2}}
\newcommandx{\change}[2][1=]{\todo[linecolor=blue,backgroundcolor=blue!25,bordercolor=blue,#1]{#2}}
\newcommandx{\info}[2][1=]{\todo[linecolor=OliveGreen,backgroundcolor=OliveGreen!25,bordercolor=OliveGreen,#1]{#2}}
\newcommandx{\improvement}[2][1=]{\todo[linecolor=Plum,backgroundcolor=Plum!25,bordercolor=Plum,#1]{#2}}











\newcommand{\cA}{\ensuremath{\mathcal{A}}}
\newcommand{\cB}{\ensuremath{\mathcal{B}}}

\newcommand{\cH}{\ensuremath{\mathcal{H}}}

\newcommand{\cK}{\ensuremath{\mathcal{K}}}
\newcommand{\cL}{\ensuremath{\mathcal{L}}}
\newcommand{\cM}{\ensuremath{\mathcal{M}}}

\newcommand{\cO}{\ensuremath{\mathcal{O}}}

\newcommand{\cS}{\ensuremath{\mathcal{S}}}

\newcommand{\cU}{\ensuremath{\mathcal{U}}}

\newcommand{\cZ}{\ensuremath{\mathcal{Z}}}



\newcommand{\rB}{\ensuremath{\mathrm{B}}}

\newcommand{\rD}{\ensuremath{\mathrm{D}}}

\newcommand{\rK}{\ensuremath{\mathrm{K}}}

\newcommand{\rM}{\ensuremath{\mathrm{M}}}

\newcommand{\rS}{\ensuremath{\mathrm{S}}}

\newcommand{\rV}{\ensuremath{\mathrm{V}}}


\newcommand{\rmc}{\ensuremath{\mathrm{c}}}
\newcommand{\rmd}{\ensuremath{\mathrm{d}}}

\newcommand{\rmr}{\ensuremath{\mathrm{r}}}


\newcommand{\vphi}{\ensuremath{\varphi}}






\newcommand{\eqstop}{\ensuremath{\, \text{.}}}
\newcommand{\eqcomma}{\ensuremath{\, \text{,}}}


\newcommand{\NN}{\ensuremath{\mathbb{N}}}
\newcommand{\ZZ}{\ensuremath{\mathbb{Z}}}

\newcommand{\RR}{\ensuremath{\mathbb{R}}}
\newcommand{\CC}{\ensuremath{\mathbb{C}}}

\renewcommand{\Re}{\ensuremath{\mathop{\mathrm{Re}}}}



\newcommand{\id}{\ensuremath{\mathrm{id}}}
\newcommand{\ra}{\ensuremath{\rightarrow}}
\newcommand{\lra}{\ensuremath{\longrightarrow}}
\newcommand{\hra}{\ensuremath{\hookrightarrow}}













\newcommand{\ot}{\ensuremath{\otimes}}


\newcommand{\Cstar}{\ensuremath{\mathrm{C}^*}}

\newcommand{\bo}{\ensuremath{\mathcal{B}}}






\newcommand{\supp}{\ensuremath{\mathop{\mathrm{supp}}}}
\newcommand{\im}{\ensuremath{\mathop{\mathrm{im}}}}

\newcommand{\Cstarred}{\ensuremath{\Cstar_\mathrm{red}}}



















\newcommand{\Ltwo}{\ensuremath{{\offinterlineskip \mathrm{L} \hskip -0.3ex ^2}}}

\newcommand{\ltwo}{\ensuremath{\ell^2}}
\newcommand{\cc}{\ensuremath{\mathrm{c}_\mathrm{c}}}


\newcommand{\linfty}{\ensuremath{{\offinterlineskip \ell \hskip 0ex ^\infty}}}

\newcommand{\lspan}{\ensuremath{\mathop{\mathrm{span}}}}
\newcommand{\cspan}{\ensuremath{\mathop{\overline{\mathrm{span}}}}}


































\newcommand{\authors}{Piotr Nowak $\bullet$ Sanaz Pooya $\bullet$ Sven Raum $\bullet$ Adam Skalski}
\renewcommand{\title}{Delocalised traces on Hecke algebras of right-angled, hyperbolic type }




\begin{document}


\thispagestyle{empty}

\noindent
\begin{minipage}{\linewidth}
  \begin{center}
    \textbf{\Large \title} \\
    \authors    
  \end{center}
\end{minipage}

\renewcommand{\thefootnote}{}
 \footnotetext{last modified on \today}
\footnotetext{
  \textit{MSC classification: 19D55; 46L05}
}
\footnotetext{
  \textit{Keywords: Hecke algebras, Coxeter groups, smooth subalgebras, delocalised traces}
}

\vspace{2em}
\noindent
\begin{minipage}{\linewidth}
  \textbf{Abstract}.
  For every Hecke C*-algebra of right-angled, hyperbolic type, we construct a smooth subalgebra to which traces associated with arbitrary conjugacy classes in the associated Coxeter group extend.  We calculate the pairing with K-theory of the delocalised traces arising this way, showing that it is faithful on K-theory.
\end{minipage}


\section{Introduction}
\label{sec:introduction}

The study of delocalised traces forms an important tool in noncommutative geometry and index theory, particularly in the analysis of delocalised $\Ltwo$-invariants and higher indices associated with manifolds, discrete groups and their operator algebras. These traces, which extend the familiar notion of the canonical trace on group C*-algebras, allow one to distinguish K-theory classes that are indistinguishable by the standard trace. A foundational instance of their use appears in the work of Lott \cite{lott1999}, who investigated for example delocalised $\Ltwo$-Betti numbers and delocalised $\Ltwo$-torsion of hyperbolic manifolds.  The theoretical underpinning for the pairing of delocalised traces with K-theory classes in the context of hyperbolic groups was developed by Puschnigg \cite{puschnigg2010}, who constructed dense subalgebras of group C*-algebras which are at the same time stable under holomorphic functional calculus and host bounded traces that sum coefficients over possibly infinite conjugacy classes. This might be viewed as a natural continuation of the work of Jolissaint  \cite{jolissaint1989-k-theory}, who, inspired by the earlier results of Connes, produced natural smooth subalgebras contained in reduced group C*-algebras, remembering the K-theory of their C*-counterparts.  More generally, Puschnigg was able to prove that cyclic homology of the group algebra injects into local cyclic homology of the associated reduced group C*-algebra.

While this story is well-developed for group C*-algebras of hyperbolic groups, a parallel and increasingly important direction involves Hecke C*-algebras, which arise as deformations of group algebras associated with Coxeter systems. Historically, spherical and affine Hecke algebras appeared naturally in number theory and representation theory and as such have become a central part of a deep and well-developed theory.  They are closely connected to the representation theory of finite groups of Lie type and reductive p-adic groups, respectively.  Delorme-Opdam's work in \cite{delormeopdam2008} considered Hecke C*-algebra of affine types and introduced Schwartz algebras in this context.  Notably, by Bernstein's theorem, affine Hecke algebras are finitely generated modules over their centre, so that classical methods related the Fourier transform could be employed.  These Hecke-Schwartz algebras of affine type were instrumental in Opdam-Solleveld's approach to determine discrete series representations \cite{opdamsolleveld2010-discrete-series} and generalised principal series \cite{delormeopdam2011} of affine Hecke algebras.

Hecke operator algebras of indefinite type drew attention only after the introduction of Hecke von Neumann algebras in the context of weighted $\Ltwo$-cohomology of buildings by Davis-Dymara-Januszkiewicz-Okun \cite{davisdymarajanusykiewiczokun07} around two decades ago.  Specifically right-angled Hecke operator algebras since then gained prominence in the operator algebra community due to their rich structural features, which are at the same time amenable to a variety of established methods from the field.  Known features of right-angled Hecke C*-algebras include the rapid decay property \cite{caspersklisselarsen2021}, simplicity results \cite{caspersklisselarsen2021,klisse2023-simplicity} and K-theory calculations \cite{raumskalski2022}. 
 Moreover, Hecke operator algebras encode significant aspects of unitary representation theory, particularly through their role in classifying Iwahori-spherical unitary representations of groups acting strongly transitively on buildings such as certain completed Kac-Moody groups.  See e.g.\ \cite[Corollary B]{raumskalski2023}.

The cyclic homology of affine Hecke algebras, was clarified in work Baum-Nistor \cite{baumnistor2002} in the purely algebraic setup and by Solleveld \cite{solleveld2009-thesis,solleveld2009-hp} in the operator algebraic setup of Hecke-C*-algebras as well as in the setup of non-commutative geometry provided by Schwartz algebras.

In this work we consider Hecke algebras of right-angled, hyperbolic type.  In this setup, delocalised traces account for all of cyclic cohomology, so that a natural first step is to determine traces on such Hecke algebras.  This is achieved in the next theorem, which is also proven in the greater generality of generic Hecke algebras in Theorem~\ref{thm:delocalised-trace-identification}.  For simplicity we state it in a simplified form for Iwahori-Hecke algebras.
\begin{introtheorem}
  \label{thmintro:delocalised-trace-identification-simplified}
  Let $\cH$ be an Iwahori Hecke algebra of right-angled type $(W,S)$.
  \begin{enumerate}
  \item For every conjugacy class $\cO \subseteq W$, there is a unique trace $\vphi_\cO$ on $\cH$ that satisfies $\vphi_\cO(T_w) = \mathbb{1}_\cO(w)$ for every element $w \in W$ that is of minimal length in its conjugacy class.
  \item The assignment $(c_\cO)_{\cO} \mapsto \sum_\cO c_\cO \vphi_\cO$ defines an isomorphism between the space of complex sequences on the set of conjugacy classes of $W$ and the space of tracial functionals on $\cH$.
  \end{enumerate}
\end{introtheorem}
We mention that in order to prove this result, we establish in Theorem~\ref{thm:right-angled-cocentre-basis} that the cocentre of Iwahori Hecke algebra of right-angled type has a natural basis indexed by representatives of conjugacy classes.  The problem of describing the cocentre of Hecke algebras is well-known in the context of character theory.  See e.g. \cite{geckpfeiffer2000-characters} for spherical Coxeter types and \cite{henie2014} for affine Coxeter types.  The recent work \cite{chen2025-centralizers} explicitly states folklore conjectures about the cocentre for indefinite Coxeter types.

The next theorem extends Puschnigg's construction of Schwartz algebras for hyperbolic groups to the realm of right-angled Hecke algebras.  We remark that Moussong's characterisation of hyperbolic Coxeter groups \cite[Theorem B]{moussong1988} implies that a right-angled Coxeter group is hyperbolic if and only if its commutation graph does not contain any induced squares.
\begin{introtheorem}
  \label{introthm:Schwartz-algebra-existence}
  Let $(W,S)$ be a right-angled, hyperbolic Coxeter system and $\cH$ the complex Iwahori-Hecke algebra of type $(W,S)$ with deformation parameter $(q_s)_{s \in S} \in \RR_{> 0}^S$.  There is a  subalgebra $\cS(W, S, q) \subseteq \Cstarred(W, S, q)$, which
  \begin{itemize}
  \item is dense and closed under holomorphic functional calculus,
  \item and such that for every conjugacy class $\cO \subseteq W$, the trace $\vphi_\cO$ continuously extends to $\cS(W, S, q)$.
  \end{itemize}
\end{introtheorem}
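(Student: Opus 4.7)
The plan is to model the construction on Puschnigg's smooth subalgebras for hyperbolic groups \cite{puschnigg2010}, transplanted to the Hecke setting. Letting $|w|$ denote the word length in the Coxeter generators $S$, for $a = \sum_{w \in W} a_w T_w \in \cH$ I would define the Sobolev-type seminorms
\[
  \|a\|_k := \biggl( \sum_{w \in W} |a_w|^2 (1 + |w|)^{2k} \biggr)^{1/2},
\]
and take $\cS(W,S,q)$ to be the closure of $\cH$ inside $\Cstarred(W,S,q)$ with respect to the Fréchet topology they generate. This subalgebra is manifestly dense by construction, so the remaining work is to verify holomorphic closure and the extension of traces.

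The first step is to check that $\cS(W,S,q)$ is stable under holomorphic functional calculus in $\Cstarred(W,S,q)$. For right-angled Hecke C*-algebras, the rapid decay property of Caspers--Klisse--Larsen \cite{caspersklisselarsen2021} bounds the operator norm of the left-regular representation of an element by a weighted $\ltwo$-norm of its Fourier coefficients, which is precisely the hypothesis powering Jolissaint's criterion \cite{jolissaint1989-k-theory}. Running Jolissaint's argument verbatim, with minor bookkeeping to accommodate the Hecke deformation of the convolution, shows that $\cS(W,S,q)$ is a Fréchet algebra continuously embedded in $\Cstarred(W,S,q)$ and closed under holomorphic functional calculus.

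The heart of the proof is the continuous extension of each delocalised trace $\vphi_\cO$ to $\cS(W,S,q)$. By Theorem~\ref{thm:right-angled-cocentre-basis}, the cocentre of $\cH$ has a basis indexed by minimal-length conjugacy class representatives, so the numbers $\vphi_\cO(T_w)$ are well-defined class polynomials in the parameters $(q_s - q_s^{-1})_{s \in S}$, vanishing unless $w$ is conjugate to an element of $\cO$. I would then establish a polynomial bound
\[
  |\vphi_\cO(T_w)| \,\leq\, C_\cO \, (1 + |w|)^{N_\cO} \cdot \mathbb{1}_\cO(w),
\]
which, combined with the polynomial growth of $\cO$ in the word length metric (a consequence of hyperbolicity, since centralisers of infinite-order elements in hyperbolic groups are virtually cyclic, forcing infinite conjugacy classes to be thin), yields absolute convergence of $\vphi_\cO(a) = \sum_w a_w \, \vphi_\cO(T_w)$ for every $a \in \cS(W,S,q)$ via a straightforward Cauchy--Schwarz estimate against any seminorm $\|\cdot\|_k$ with $k$ sufficiently large.

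The main obstacle is the polynomial bound on $\vphi_\cO(T_w)$. To derive it I would exploit the explicit cyclic-shift reduction underlying Theorem~\ref{thm:right-angled-cocentre-basis}: each reduction step is either a length-preserving cyclic permutation that permutes basis elements, or an application of the Hecke quadratic relation $T_s^2 = (q_s - q_s^{-1}) T_s + 1$ accompanied by a strict decrease of word length. Hyperbolicity of $(W, S)$ bounds the length of the conjugator realising the reduction linearly in $|w| - \ell_c(w)$, where $\ell_c(w)$ is the minimal length within the conjugacy class of $w$; this controls the number of quadratic-relation applications and thereby the degree of the resulting class polynomial. Feeding this into the Cauchy--Schwarz estimate of the previous paragraph completes the construction of the continuous trace extension.
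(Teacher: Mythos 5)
Your construction of the candidate algebra and the verification of density and holomorphic closure are essentially the paper's Sections 4--5: the Haagerup-type inequality of Caspers--Klisse--Larsen does power a Jolissaint-style argument showing that the Sobolev completion $\ltwo_{\mathrm{RD}}(W,S,q)$ is a smooth subalgebra of $\Cstarred(W,S,q)$ (Theorem~\ref{thm:ltwo-rd-smooth}, Corollary~\ref{cor:unconditional-differential-seminorm}). The gap is in the trace-extension step, and for your choice of algebra it is fatal. First, the asserted vanishing $\vphi_\cO(T_w) = 0$ for $w \notin \cO$ is false: by Lemma~\ref{lem:non-minimal-element-cocentre} the image of $T_w$ in the cocentre is a combination of $T_\cK$ over \emph{all} conjugacy classes $\cK$ of elements Bruhat subordinate to $w$, so for instance $\vphi_{\cO}(T_{sts}) = b_s \neq 0$ for $\cO$ the class of $ts$, even though $sts$ is conjugate to $t$ and not to $ts$. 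Second, and more seriously, the growth claim is backwards: a virtually cyclic centraliser makes a conjugacy class \emph{large}, since $\cO$ is in bijection with $W/\cZ_W(w)$, and the number of elements $\gamma m \gamma^{-1}$ of length at most $n$ is comparable to the number of conjugators $\gamma$ of length at most $n/2$ — exponential in $n$ for any nonelementary hyperbolic $W$. Consequently $\sum_w |\vphi_\cO(T_w)|^2 (1+|w|)^{-2k}$ diverges for every $k$ when $\cO$ is infinite, and the Cauchy--Schwarz estimate against Sobolev norms cannot close. Already in the undeformed case $q \equiv 1$ one sees directly that $\vphi_\cO$ is unbounded on $\ltwo_{\mathrm{RD}}(W,S,q)$ for infinite $\cO$: normalised indicator sums of $\cO$ intersected with balls tend to $0$ in every Sobolev norm while their trace stays equal to $1$. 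This is precisely the obstruction that forces Puschnigg — and this paper — to pass to a strictly smaller smooth subalgebra.

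What is missing is a seminorm that controls coefficients along geodesic splittings rather than merely weighting by length. The paper's route is to introduce the quasi-derivation $\Delta(T_w) = \sum T_{w_1} \ot T_{w_2}$, summed over reduced decompositions $w = w_1 w_2$, prove using the word combinatorics of Section 3 and hyperbolicity that it is a quasi-derivation for every unconditional norm (Theorem~\ref{thm:quasi-derivation-hecke-case}), and take $\cS(W,S,q)$ to be the completion with respect to the augmented seminorms $\rD_n + \rD_{n-1}\circ\Delta$ (Corollary~\ref{cor:hecke-schwartz-algebra-continuous-quasi-derivation}), which is still smooth by the differential-seminorm machinery of Section 4. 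The trace is then extended not by a coefficientwise estimate but structurally: writing $w = \gamma(w)\rho(w)\gamma(w)^{-1}$ with $\rho(w)$ of minimal length in its class, the map $T_w \mapsto T_{\gamma(w)\rho(w)} \ot T_{\gamma(w)^{-1}}$ is dominated coefficientwise by $\Delta$, hence continuous into the unconditional tensor square, and $\vphi_\cO$ factors as $\Sigma_\cO$ composed with multiplication, the tensor flip and this map (Theorem~\ref{thm:delocalised-trace-existence}). Your outline needs to be rebuilt around some device of this kind; the algebra $\ltwo_{\mathrm{RD}}(W,S,q)$ alone does not carry the delocalised traces of infinite conjugacy classes.
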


From the point of view of operator algebras, one of the main purposes of delocalised traces is to determine K-theory classes via their pairing.  The K-theory of right-angled Hecke C*-algebras was calculated in \cite{raumskalski2022}.  We briefly recall that for every clique $C$ in the commutation graph of a right-angled Coxeter system there is a projection $p_C$, such that the classes $([p_C])_C$ freely generate K-theory.  In Remark~\ref{rem:finite-order-element-cliques}, we observe that these cliques are in natural bijection with finite order elements of a right-angled Coxeter system.  We obtain the following result, which determines the pairing of K-theory and delocalised traces.
\begin{introtheorem}
  \label{thmintro:pairing}
  Let $(W,S)$ be a right-angled, hyperbolic Coxeter system and $(q_s)_{s \in S} \in \RR_{> 0}^S$ a deformation parameter.  Denote by $\mathrm{Cliq}(\Gamma)$ the set of cliques of the commutation graph $\Gamma$ of $(W, S)$, identified with the set of conjugacy classes of finite order elements in $W$.  For a clique $C \in \mathrm{Cliq}(\Gamma)$ let $p_C$ be the associated projection and let $\vphi_C$ be the delocalised trace associated with $C$.  Then the pairing between K-theory of $\Cstarred(W, S, q)$ and delocalised traces of finite order elements is determined by the formula
  \begin{gather*}
    \vphi_C(p_D) =
    \begin{cases}
      0 & C \nsubseteq D\eqcomma \\
      \prod_{s \in C} \frac{1}{q_s^{1/2} + q_s^{-1/2}} \prod_{s \in D \setminus C} \frac{1}{1 + q_s} & C \subseteq D \eqstop
    \end{cases}
  \end{gather*}
  In particular, the pairing $\rK_0(\Cstarred(W,S,q)) \times \RR^{\mathrm{Cliq(\Gamma)}} \lra \RR$ is non-degenerate.
\end{introtheorem}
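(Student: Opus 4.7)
The plan is to reduce the computation to an explicit expansion of the projections $p_C$ against the characterisation of the delocalised traces given by Theorem~\ref{thmintro:delocalised-trace-identification-simplified}. Theorem~\ref{introthm:Schwartz-algebra-existence} ensures that $\vphi_C$ is a continuous functional on a holomorphically closed dense subalgebra of $\Cstarred(W,S,q)$, so the pairing with $\rK_0$ is well defined and, since the generators $[p_C]$ already come from $\cH$, may be computed at the level of the Hecke algebra itself.

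First I would recall from \cite{raumskalski2022} that each normalised generator $T_s$ satisfies a quadratic relation with eigenvalues $q_s^{1/2}$ and $-q_s^{-1/2}$, giving a spectral projection $p_s = (T_s + q_s^{-1/2})/(q_s^{1/2} + q_s^{-1/2})$, and that for a clique $C$ the product $p_C = \prod_{s \in C} p_s$ is well defined and a self-adjoint projection because the elements of $C$ pairwise commute in $W$. Expanding the product yields
\begin{equation*}
  p_D \;=\; \frac{1}{\prod_{s \in D}(q_s^{1/2} + q_s^{-1/2})} \sum_{E \subseteq D} \Bigl(\prod_{s \in D \setminus E} q_s^{-1/2}\Bigr)\, T_{w_E}\eqcomma
\end{equation*}
where $w_E = \prod_{s \in E} s$ is the longest element of the finite parabolic subgroup generated by $E$, isomorphic to $(\ZZ/2)^{|E|}$.

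The key combinatorial observation is then that each such $w_E$ is of minimal length in its conjugacy class, and that $E \mapsto [w_E]$ realises the bijection between cliques and conjugacy classes of finite-order elements of $W$ used in the statement. Granted this, Theorem~\ref{thmintro:delocalised-trace-identification-simplified} gives $\vphi_C(T_{w_E}) = \mathbb{1}_{E = C}$, so only the term $E = C$ survives the expansion above, immediately yielding $\vphi_C(p_D) = 0$ when $C \not\subseteq D$. When $C \subseteq D$, the identity $q_s^{-1/2}/(q_s^{1/2} + q_s^{-1/2}) = 1/(1 + q_s)$ transforms the surviving term into exactly the product formula in the statement.

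For non-degeneracy, I would order $\cC$ by any linear extension of inclusion. The vanishing result above makes the Gram matrix $(\vphi_C(p_D))_{C, D \in \cC}$ triangular, with strictly positive diagonal entries $\prod_{s \in C}(q_s^{1/2} + q_s^{-1/2})^{-1}$, hence invertible; since $([p_C])_{C \in \cC}$ freely generates $\rK_0(\Cstarred(W,S,q))$ by \cite{raumskalski2022}, the pairing is non-degenerate. The main obstacle is the combinatorial step: identifying conjugacy classes of finite-order elements with cliques and proving minimality of $w_E$. This is folklore in the right-angled setting, but it needs to be pinned down carefully in the hyperbolic case handled here; everything else is a routine expansion.
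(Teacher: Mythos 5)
Your proposal is correct and follows essentially the same route as the paper: expand $p_D$ as a linear combination of the $T_{w_E}$ for $E \subseteq D$, observe that each $w_E$ is a minimal length representative of its conjugacy class and that distinct cliques give non-conjugate elements (this is exactly Remark~\ref{rem:finite-order-element-cliques} together with Theorem~\ref{thm:cyclic-reduction-shifts-right-angled}, so the combinatorial step you flag as the main obstacle is already in place), then apply $\vphi_C = \Sigma_C \circ \Phi$ termwise. Your triangular Gram matrix argument for non-degeneracy is the same in substance as the paper's choice of a maximal (resp.\ minimal) clique with non-vanishing coefficient.
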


As mentioned above, in the last section of \cite{puschnigg2010} Puschnigg used his main results to obtain certain decompositions of cyclic cohomology for his smooth algebras and the group C*-algebras of hyperbolic groups, close in spirit to the computation of cyclic homology of group rings due to Burghelea \cite{burghelea1985, burghelea2023-erratum-for-1985}. This line of research was later continued, using the Jolissaint's rapid decay subalgebras, for example in \cite{jiogleramsey14} and in \cite{engel20}. Our results in principle open the way to obtaining similar decompositions of cyclic cohomology in the context of Hecke algebras, such as the Fr{\'e}chet algebra $\cS(W, S, q)$ appearing in Theorem \ref{introthm:Schwartz-algebra-existence}. However, as we obtain a faithful pairing between delocalised traces and K-theory already, we leave this direction of research for future work.  It would chime with the recent progress in understanding noncommutative geometric properties of the Iwahori-Hecke algebras such as in the recent work \cite{klisseperovic25}.

This paper is organised as follows: after this introduction, in the preliminaries we describe background material and fix notation, focusing on Coxeter groups and Hecke algebras.  Then in the third section, we investigate the combinatorics of the multiplication in right-angled Hecke algebras, which plays a major role in this work.  In the fourth section, we adapt work of Blackadar-Cuntz and Puschnigg on differential seminorms and quasi-derivations to the needed generality.  In the fifth section, we use this framework in order to exhibit specific smooth subalgebras of the Hecke algebras under consideration, to which the natural quasi-derivation extends.  In the sixth section, we describe traces on right-angled Hecke algebras, and in that last section combine this with the previous results on smooth subalgebras, to obtain delocalised traces on Hecke-Schwartz algebras and calculate their pairing with K-theory.

\subsection*{Acknowledgements}

SP was partially supported by European Research Council (ERC) under the European Union’s Horizon 2020 research and innovation programme (grant agreement no. 677120-INDEX) and the Knut and Alice Wallenberg Foundation (grant number KAW 2020.0252).

SR was partially supported by the Swedish Research Council (grant number 2018-04243) and the German Research Foundation (DFG project no. 550184791).

PN was supported by the National Science Center Grant Maestro-13 UMO-2021/42/A/ST1/00306.

AS was partially supported by the National Science Center Grant OPUS-29 UMO-2025/57/B/ST1/00057.

The authors would like to thank IMPAN and its B{\c e}dlewo Conference Center as well as the University to Potsdam for their hospitality during respective visits.

\section{Preliminaries}
\label{sec:preliminaries}

As this article combines the theory of Coxeter groups and Hecke algebras with noncommutative geometry and operator algebras, we provide in this section references to necessary background material and recall key notions that are used in our work.

\subsection{Coxeter groups}
\label{sec:coxeter-groups}

We refer to the books of Humphreys \cite{humphreys90}, Garrett \cite{Garrettbook} and Abramenko-Brown \cite{abramenkobrown2008-buildings} for background material on Coxeter groups.

A marked group is a pair $(\Gamma, S)$ of a group with a finite generating subset $S \subseteq \Gamma$.  We write $S^*$ for the monoid of words with letters in $S$ and $|s_1 \dotsm s_n| = n$ for the length of a word in $S^*$.  Assuming $S$ is closed under inverses, for every $g \in \Gamma$ there is a word $s_1 \dotsm s_n \in S^*$ such that $s_1 \dotsm s_n = g$ in $\Gamma$.  We say that $s_1 \dotsm s_n$ represents $g$.  If $n$ is the minimal length of a word representing $g$, we say that $s_1 \dotsm s_n$ is reduced and write $|g| = n$.  The word $s_1 \dotsm s_n$ starts with $s_1$ and ends with $s_n$.  For $1 \leq k \leq n$, a word of the form $s_1 \dotsm s_k$  is called an initial piece and a word of the form $s_k \dotsm s_n$ is called a terminal piece of $s_1 \dotsm s_n$.

A Coxeter system is a marked group $(W, S)$ which admits a presentation
\begin{gather*}
  W = \langle S \mid (st)^{m_{st}} = e \rangle
\end{gather*}
for some matrix $(m_{st})_{s,t} \in \rM_S(\NN \cup \{\infty\})$ satisfying
\begin{itemize}
\item $m_{ss} = 1$ for all $s \in S$,
\item $m_{st} \geq 2$ for all $s, t \in S$ with $s \neq t$, and
\item $m_{st} = m_{ts}$ for all $s,t \in S$.
\end{itemize}
It is a fact that $m_{st} = \mathrm{ord}(st)$ for all $s,t \in W$, so that we can refer to these numbers without ambiguity. Note that $(st)^\infty=1$ is interpreted as the empty relation. 

A Coxeter system $(W, S)$  is called right-angled, if $m_{st} \in \{1,2,\infty\}$ for all $s,t \in S$.  The commutation graph of a right-angled Coxeter system $(W, S)$ is by definition the simplicial graph whose vertex set is $S$ and in which $s,t \in S$ are adjacent if and only if $m_{st} = 2$ holds, that is $st = ts$ in $W$.

For a subset $T \subseteq S$, we denote by $W_T \unlhd W$ the subgroup generated by $T$. A subgroup of the form $W_T$  is called a standard parabolic subgroup of $W$.

For every element $w \in W$ in a Coxeter system $(W, S)$, there is a subset $S(w) \subseteq S$ such that every reduced word $s_1 \dotsm s_n$ representing $w$ satisfies $S(w) = \{s_1, \dotsc, s_n\}$.  We say that the letters in $S(w)$ appear in $w$. A letter $s \in S$ is said to be an initial letter of the group element $w \in W$, if $|sw| < |w|$ holds.  Similarly, $s$ is a terminal letter of $w$ if $|ws| < |w|$. An expression $w = x_1 x_2 x_3 \dotsc x_k$ will be called a reduced decomposition of $w \in W$ if $x_1, \dotsc, x_k \in W \cup S^*$, the equality holds in $W$ and $|w| = |x_1| + \dotsm + |x_k|$ holds.  We will make use of the following fact for right-angled Coxeter systems, which is a consequence of Tits' solution to the word problem in Coxeter groups.
\begin{proposition}
  \label{prop:racs-cancellation}
  Let $(W, S)$ be a right-angled Coxeter system, let $s_1 \dotsm s_n$ be a reduced word in $S$ and let $s \in S$ be a terminal letter of the associated group element.  Then there is a unique index $1 \leq i \leq n$ such that $s = s_i$ and $[s_j, s] = e$ for all $i \leq j \leq n$. 
\end{proposition}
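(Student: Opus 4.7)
The plan is to deduce the proposition from Tits' solution to the word problem for Coxeter systems, which in the right-angled case (all finite $m_{st}$ equal to $2$) asserts that any two reduced expressions for the same group element are connected by a finite sequence of elementary moves, each swapping two adjacent, commuting generators.

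For existence, since $s$ is a terminal letter of $w = s_1 \cdots s_n$, the element $w$ admits a reduced expression $u_1 \cdots u_{n-1} s$ ending in $s$. Tits' theorem yields a sequence $w^{(0)} = s_1 \cdots s_n, w^{(1)}, \ldots, w^{(T)} = u_1 \cdots u_{n-1} s$ in which each $w^{(k+1)}$ is obtained from $w^{(k)}$ by a single such swap. The next step is to assign to every one of the $n$ positions in $w^{(T)}$ its origin position in $w^{(0)}$, obtained by tracing back through the swaps; since a swap preserves the actual letters sitting at the two positions involved, this tracking yields a permutation $\pi$ of $\{1, \ldots, n\}$ such that the letter at tracked origin $k$ equals $s_k$. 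In particular, the final $s$ at position $n$ originates at some position $i$, so $s_i = s$ and $\pi(i) = n$.

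To obtain the commutation condition, for each $j$ with $i < j \leq n$ I would observe that $\pi$ is a bijection, so $\pi(j) < n = \pi(i)$, meaning the relative order of the tracked letters originating at $i$ and $j$ has been reversed. Since the relative order of two tracked positions flips exactly when they are swapped in the sequence, the pair $(i, j)$ must undergo at least one swap, and at such a step the corresponding tracked letters, namely $s_i = s$ and $s_j$, are adjacent and commute. Hence $[s, s_j] = e$. For uniqueness, if $i_1 < i_2$ both satisfied the conclusion, then $s_{i_1} = s = s_{i_2}$ and $s$ would commute with each $s_\ell$ for $i_1 < \ell < i_2$, so the subword $s_{i_1} s_{i_1+1} \cdots s_{i_2}$ rewrites via these commutations as $s_{i_1+1} \cdots s_{i_2-1} \cdot s \cdot s = s_{i_1+1} \cdots s_{i_2-1}$, contracting $s_1 \cdots s_n$ to an element of length at most $n - 2$, contradicting reducedness.

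The delicate point is the inversion argument in the existence proof, establishing that two tracked origin positions whose final relative order is reversed must be exchanged an odd (hence nonzero) number of times in the sequence of adjacent swaps, and that each such exchange is a genuine commutation of the two tracked letters. A fallback I would be ready to carry out instead is an induction on $n$ based on the standard fact that if two distinct generators $s, t$ are both right descents of $w$ then $m_{st}$ is finite; in the right-angled case this forces $[s, t] = e$ and produces a reduced expression for $w$ ending in $ts$, allowing the inductive step to replace $w$ by the shorter element $s_1 \cdots s_{n-1}$ whose terminal letter is still $s$.
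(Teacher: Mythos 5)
Your proof is correct and follows exactly the route the paper indicates: the paper states this proposition without proof, merely noting it is a consequence of Tits' solution to the word problem, and your position-tracking argument through the sequence of adjacent commuting swaps is a complete and valid realization of that derivation --- the key point, that two tracked letters whose relative order is reversed must at some step be directly (and hence commutingly) swapped, is sound, and both the uniqueness argument and the descent-set fallback are also correct. Nothing further is needed.
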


We will use the Bruhat order on elements of a Coxeter system, described for example in \cite[Sections 5.9 and 5.10]{humphreys90} or \cite[Section 1.8]{Garrettbook}.  Given a Coxeter system $(W, S)$ and $w,x \in W$, we write $x \leq w$ if there is a reduced expression $w = s_1\dotsm s_n$ and indices $1 \leq i_1 < \dotsm < i_m \leq n$ such that $x = s_{i_1} \dotsm s_{i_m}$.

\subsubsection*{Spherical subsets and their centralisers}

A subset $T \subseteq S$ for a Coxeter system $(W, S)$ is called spherical, if the standard parabolic subgroup $W_T$ is finite.

Let $(W, S)$ be a right-angled Coxeter system and $w \in W$ a product of pairwise commuting elements from $S$.  Then the centraliser of $w$ in $W$ can be described using Tits' solution to the word problem as described for example in \cite[Theorem 2.33]{abramenkobrown2008-buildings}.
\begin{gather*}
  \cZ_W(w) = \langle s \in S \mid \forall t \in S(w) \colon [s,t] = e \rangle
  \eqstop
\end{gather*}
In particular, $\cZ_W(w)$ is a standard parabolic subgroup.  For a subset $T \subseteq S$ we write $\mathrm{Comm(T)} \subseteq S$ for the set of generators commuting with all elements from $T$.  We abbreviate $\mathrm{Comm}(w) = \mathrm{Comm}(S(w))$.  Then $\cZ_W(w) = W_{\mathrm{Comm}(w)}$.

Let us fix the next piece of information as a remark for later reference.
\begin{remark}
  \label{rem:finite-order-element-conjugacy}
  By \cite[Proposition 2.87]{abramenkobrown2008-buildings}, every finite subgroup of a Coxeter group is conjugate to a subgroup of a spherical standard parabolic subgroup.  In particular, an element $w \in W$ has finite order if and only if it is conjugate into a spherical standard parabolic.
\end{remark}

Since  $\mathrm{ord}(st) = m_{st}$ for all $s,t \in S$, it follows that a standard parabolic subgroup $W_T \subseteq W$ in a right-angled Coxeter system $(W, S)$ is finite if and only if it is isomorphic with a direct sum $(\ZZ/2\ZZ)^{\oplus T}$.

The following remark will be used in Section~\ref{sec:traces-schwartz}.
\begin{remark}
  \label{rem:finite-order-element-cliques}
  Let $\Gamma$ be the commutation graph of a right-angled Coxeter system $(W, S)$.  Denote by $\mathrm{Cliq}(\Gamma)$ the set of cliques in $\Gamma$, that is the set of complete subgraphs of $\Gamma$ (including the empty graph).  Identifying vertices of $\Gamma$ with $S$, we consider the map $C \mapsto \prod_{s \in \rV(C)} s$.  By Remark~\ref{rem:finite-order-element-conjugacy} every finite order element is conjugate to an element in the image of this map.  Further, if $\prod_{s \in \rV(C)} s$ and $\prod_{s \in \rV(D)} s$ are conjugate, they must be equal by Tits solution to the word problem.  It follows that $C \mapsto \prod_{s \in \rV(C)} s$ defines a bijection between $\mathrm{Cliq}(\Gamma)$ and the set of conjugacy classes of finite order elements of $W$.
\end{remark}

\subsubsection*{Conjugacy classes}

We continue to consider a right-angled Coxeter system $(W, S)$ and summarise how one can describe conjugacy classes in $W$.   For simple reference in this article, we state the following theorem which summarises work on conjugacy classes in Coxeter groups from \cite{marquis2014-straight,marquis2020-structure-conjugacy-class,marquis2021-cyclically-reduced}.  Recall that a cyclic reduction of an element $w \in W$ is an element $w ' = s_k \dotsm s_1 w s_1 \dotsm s_k \in W$ such that $|s_{i+1} \dotsm s_1 w s_1 \dotsm s_{i+1}| < |s_i \dotsm s_1 w s_1 \dotsm s_i|$ for all $1 \leq i < k$.  We say $w$ cyclically reduced to $w'$.  If there is no cyclic reduction of $w$, we call it cyclically reduced.  Given a cyclically reduced element $w$ with reduced expression $w = s_1 \dotsc s_n$, every element $s_i \dotsm s_n s_1 \dotsm s_{i + 1} \in W$ for $1 \leq i \leq n$ is called a cyclic shift of $w$.
\begin{theorem}[Marquis]
  \label{thm:cyclic-reduction-shifts-right-angled}
  Let $(W, S)$ be a right-angled Coxeter system and $w \in W$.  Then the following statements hold true:
  \begin{itemize}
  \item every element $w$ can be cyclically reduced to a minimal length element in its conjugacy class;
  \item every pair of minimal length elements in the same conjugacy class are cyclic shifts of each other.
  \end{itemize}
\end{theorem}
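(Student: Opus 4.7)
The plan is to reduce both statements to a single \emph{Key Lemma}: any two cyclically reduced conjugate elements in a right-angled Coxeter system $(W, S)$ are connected by a sequence of cyclic shifts. Granting this, statement (ii) is immediate, since a minimal length element in a conjugacy class is automatically cyclically reduced. For statement (i), any single step of cyclic reduction strictly decreases length, so iterating starting from $w$ terminates in finitely many steps at some cyclically reduced $w^* \in W$ in the conjugacy class of $w$. To see that $w^*$ has minimal length, take any conjugate $v$, cyclically reduce it to some $v^*$, and invoke the Key Lemma to deduce that $v^*$ is a cyclic shift of $w^*$, whence $|w^*| = |v^*| \leq |v|$.

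For the Key Lemma I would first establish a local analysis of single-generator conjugation $w \mapsto sws$ for cyclically reduced $w$, using Proposition~\ref{prop:racs-cancellation} together with the sharpened form of Tits' solution to the word problem for right-angled systems (two reduced expressions of the same element differ only by commutations of adjacent commuting letters). The targeted dichotomy is: either $|sws| > |w|$, or $|sws| = |w|$, and in the latter case either $s \notin S(w)$ commutes with every letter of $w$ so that $sws = w$, or $s \in S(w)$ appears as an initial or terminal letter of some reduced expression for $w$, in which case $sws$ is a cyclic shift of $w$. The remaining sub-case, $s \in S(w)$ neither initial nor terminal, is excluded by a direct argument showing $|sws| = |w| + 2$ in that scenario.

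The Key Lemma itself then follows by induction on the length of a shortest conjugator $g \in W$ satisfying $g w_1 g^{-1} = w_2$. Writing $g = t_1 \dotsm t_l$ reduced, one considers the chain of intermediate conjugates $u_i = t_{i+1} \dotsm t_l w_1 t_l \dotsm t_{i+1}$ for $0 \leq i \leq l$, so that $u_l = w_1$, $u_0 = w_2$, and each $u_{i-1}$ is obtained from $u_i$ by conjugation by the single generator $t_i$. The main obstacle will be that the length sequence $(|u_i|)_i$ need not be monotone. A diamond-type argument handles this: whenever the sequence exhibits a local maximum $|u_{i-1}| < |u_i| > |u_{i+1}|$, the right-angled structure and the local analysis force the consecutive generators $t_i$ and $t_{i+1}$ to interact with $u_i$ in a coordinated fashion, yielding a strictly shorter conjugator to which the inductive hypothesis applies. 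Iteratively removing peaks and invoking cyclic reducedness of $w_1$ and $w_2$ at the two endpoints reduces the length sequence to a constant, after which every intermediate step is, by the local analysis, either a cyclic shift or a trivial identity conjugation, proving the lemma.
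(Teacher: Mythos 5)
You should first be aware that the paper does not prove this theorem from first principles: it is attributed to Marquis, and the proof consists of checking that right-angled Coxeter groups satisfy Marquis' property $(\mathbf{Cent})$ via Tits' solution to the word problem, and then citing \cite{marquis2014-straight} (Corollary~C for the first bullet, Theorem~A for infinite-order elements in the second bullet) together with the elementary Remark~\ref{rem:finite-order-element-cliques} for finite-order elements. Your proposal instead attempts a self-contained combinatorial proof. The reduction of both bullets to your Key Lemma is correct and clean, and the local analysis of $w \mapsto sws$ for cyclically reduced $w$ is essentially right and provable from Proposition~\ref{prop:racs-cancellation}. The difficulty is that the Key Lemma is exactly where the substance of Marquis' work lives, and your sketch of it has genuine gaps.

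Concretely. (1) Your claim that a local maximum $|u_{i-1}| < |u_i| > |u_{i+1}|$ yields a \emph{strictly shorter} conjugator is only correct in the sub-case where $t_i$ cancels the prepended occurrence of $t_{i+1}$, so that $u_{i-1} = u_{i+1}$ and both letters can be deleted from $g$. In the other sub-case $t_i$ commutes with $t_{i+1}$ and is an initial letter of $u_{i+1}$; swapping $t_i t_{i+1} \to t_{i+1} t_i$ gives a conjugator of the \emph{same} length in which the peak value $|u_i|$ has merely been replaced by $|t_i u_{i+1} t_i| \leq |u_{i+1}|$. Termination therefore needs a secondary well-founded measure (e.g.\ lexicographic induction on the pair consisting of $|g|$ and $\sum_i |u_i|$); this is fixable but must be stated, since as written the induction does not close. (2) More seriously, eliminating strict local maxima does not force the length sequence to be constant: the maximum can be attained along a plateau, i.e.\ an ascent followed by a nonempty run of equal-length steps (each of which is, by your local analysis, a rotation or a trivial conjugation) followed by a descent. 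Lowering such a plateau requires commuting the descending conjugation past a chain of cyclic shifts, an argument your sketch does not contain. (3) Relatedly, when the maximal plateau abuts an endpoint you need to know that a cyclic shift of a cyclically reduced element is again cyclically reduced; this does not follow from your local analysis and cannot be deduced from length-minimality, which is precisely what you are trying to establish. Finally, a smaller point: your Key Lemma produces a chain of elementary cyclic shifts, while the statement says the two minimal elements \emph{are} cyclic shifts of each other; you should either show the chain composes to a single rotation of some reduced expression, or note that the equivalence-closure reading is the intended one (it is the one Marquis' Theorem~A provides).
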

\begin{proof}
  We first observe that every element in a right-angled Coxeter group satisfies Marquis' property $(\mathbf{Cent})$ by Tits solution to the word problem.  Now the first statement is a consequence of \cite[Corollary C]{marquis2014-straight}.  The second statement follows from a combination of \cite[Theorem A]{marquis2014-straight} treating the case of infinite order elements and the elementary Remark~\ref{rem:finite-order-element-cliques} treating the finite order case.
\end{proof}

\subsection{Hecke algebras}
\label{sec:hecke-algebras}

\begin{definition}
  \label{def:hecke-algebra}
  Let $(W, S)$ be a Coxeter system, let $R$ be a ring and let $a = (a_s)_{s \in S}$, $b = (b_s)_{s \in S}$ be elements in $R$ such that $(a_s, b_s) = (a_t, b_t)$ for all $s,t \in S$ that are conjugate in $W$.  Then the \emph{generic Hecke algebra} of type $(W, S)$ over $R$ with deformation parameters $a$ and $b$ is the unique $R$-algebra $\cH(W, S, a, b)$ which is free as a module over $R$ with designated basis $(T_w)_{w \in W}$ and multiplication determined by the following formula:
  \begin{gather*}
    T_s T_w =
    \begin{cases}
      T_{sw}  & \text{if } |sw| > |w| \\
      a_s T_w + b_s T_{sw} & \text{if } |sw| < |w|
    \end{cases}
    \qquad
    \text{ for  }
    s \in S, w \in W
    \eqstop
  \end{gather*}

  Given elements $q = (q_s)_{s \in S} \in \RR_{> 0}^S$ such that $q_s = q_t$ for all $s,t \in S$ that are conjugate in $W$, we put $p_s = q_s^{1/2} - q_s^{-1/2}$, $p = (p_s)_{s \in S}$ and let the \emph{Iwahori-Hecke algebra} with parameter $(q_s)_{s \in S}$ be defined as the complex generic Hecke algebra $\CC(W, S, q) = \cH(W, S, p, 1)$.

  Extending the action of an Iwahori-Hecke algebra by left multiplication on itself to the Hilbert space completion with respect to the inner product satisfying $\langle T_x, T_w \rangle = \delta_{x,w}$, we obtain the \emph{reduced Hecke C*-algebra} $\Cstarred(W, S, q)$ as the completion of $\CC(W, S, q)$ in the operator norm.
\end{definition}

\begin{remark}
  \label{rem:ra-hecke-algebras-parameters}
  In this article, we will only consider Hecke algebras of right-angled type $(W,S)$.  In this case, all tuples $(a_s)_{s \in S}, (b_s)_{s \in S}$ can be used to define a Hecke algebra, since different generators from $S$ are non-conjugate in $S$.  This is an immediate consequence of Tits' solution to the word problem in Coxeter systems.
\end{remark}

Generic Hecke algebras of right-angled type are often isomorphic to the group algebra of their Coxeter group.  For a reference in the case of Iwahori-Hecke algebras see e.g. \cite[Proposition 4.2]{caspersklisselarsen2021}.  As some of our results are phrased in the more general setup of generic Hecke algebras, we provide a proof in this case too.
\begin{lemma}
  \label{lem:order-preserving-base-change}
  Let $(I, \leq)$ be a partially ordered set, $R$ a domain and $(v_i)_{i \in I}$ an $R$-linearly independent family in an $R$-module $M$.  Assume that $(w_i)_{i \in I}$ is another family in $M$ such that $w_i = \sum_{j \leq i} c_{i,j} v_j$ for certain coefficients $c_{i,j} \in R$ satisfying $c_{i,i} \neq 0$.  Then $(w_i)_i$ is linearly independent.
\end{lemma}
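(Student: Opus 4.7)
The plan is a standard triangularity argument: assume a nontrivial vanishing linear combination of the $w_i$, look at the coefficient of $v_k$ for a carefully chosen maximal index $k$, and use the domain hypothesis together with the hypothesis $c_{i,i}\neq 0$ to contradict $R$-linear independence of the $v_i$.

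More concretely, suppose for contradiction that there is a finite subset $F \subseteq I$ and coefficients $(\lambda_i)_{i \in F}$ in $R$, not all zero, such that $\sum_{i \in F} \lambda_i w_i = 0$. Replacing $F$ by the support $F' = \{i \in F : \lambda_i \neq 0\}$, I may assume every $\lambda_i$ is nonzero. The first key step is to pick a maximal element $k \in F'$ with respect to the restriction of $\leq$ to the finite set $F'$; such a $k$ exists because any finite poset has maximal elements.

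The second step is to expand the vanishing relation in the basis $(v_j)$ using the triangularity hypothesis:
\begin{gather*}
  0 \;=\; \sum_{i \in F'} \lambda_i w_i \;=\; \sum_{i \in F'} \lambda_i \sum_{j \leq i} c_{i,j} v_j \;=\; \sum_{j} \Bigl( \sum_{\substack{i \in F' \\ j \leq i}} \lambda_i c_{i,j} \Bigr) v_j \eqstop
\end{gather*}
By $R$-linear independence of the family $(v_i)_{i \in I}$, each coefficient vanishes. In particular, the coefficient of $v_k$ is $\sum_{i \in F',\, k \leq i} \lambda_i c_{i,k} = 0$. By maximality of $k$ in $F'$, the only index $i \in F'$ satisfying $k \leq i$ is $i = k$ itself, so this sum reduces to the single term $\lambda_k c_{k,k}$. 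Thus $\lambda_k c_{k,k} = 0$, and since $R$ is a domain and $c_{k,k} \neq 0$ by hypothesis, we conclude $\lambda_k = 0$, contradicting $k \in F'$.

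I do not expect a serious obstacle here. The only subtlety worth flagging is the need for $R$ to be a domain (used exclusively to cancel $c_{k,k}$) and the use of the existence of maximal elements in finite posets, which is what makes the reduction from an arbitrary index set $I$ to a finite support work without any additional hypothesis on $(I, \leq)$.
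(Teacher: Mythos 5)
Your proof is correct and follows essentially the same route as the paper's: expand in the basis $(v_j)$, pick a maximal index in the (finite) support of the coefficients, observe that the coefficient of the corresponding basis vector collapses to a single term $\lambda_k c_{k,k}$, and use the domain hypothesis. The only difference is that you make explicit the reduction to a finite support before invoking the existence of a maximal element, which the paper leaves implicit.
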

\begin{proof}
  Consider a linear combination $\sum d_i w_i = 0$.  Then $\sum_{i,j \in I, j \leq i} d_i c_{i,j}v_j = 0$.  Assuming that $d_i \neq 0$ for some $i$, we let $i_0$ be a maximal element with this property.  If $c_{j,i_0} \neq 0$, we infer that $j \geq i_0$.  If additionally $d_j \neq 0$, then maximality implies $j = i_0$.   So the coefficient corresponding to $v_{i_0}$ in the expansion above equals $d_{i_0}c_{i_0,i_0}$. Thus   
  \begin{gather*}
    0 = (\sum_{i,j \in I, j \leq i} d_i c_{i,j}v_j)_{i_0} = d_{i_0}c_{i_0,i_0}
  \end{gather*}
  implying that $d_{i_0} = 0$, as $R$ is a domain and $c_{i_0,i_0} \neq 0$ by assumption.  This contradicts the choice of $i_0$ and completes the proof.
\end{proof}

\begin{proposition}
  \label{prop:isomorphism-ra-hecke-algebra}
  Let $(W,S)$ be a right-angled Coxeter system, let $R$ be a domain of characteristic different from $2$ and let $(a_s)_{s \in S}$, $(b_s)_{s \in S}$ be elements in $R$ such that $b_s + \frac{a_s^2}{4}$ is invertible and has a root in $R$ (which we shall fix).  Then there is an isomorphism of unital $R$-algebras $\vphi\colon R W \lra \cH(W, S, (a_s)_{s \in S}, (b_s)_{s \in S})$ satisfying $\vphi(u_s) = c_s T_s + d_s$ with
  \begin{gather*}
    c_s = (b_s + \frac{a_{s}^2}{4})^{-1/2}
    \quad
    \text{ and }
    d_s = -\frac{c_s a_s}{2}, \;\; s \in S
    \eqstop
  \end{gather*}
  It satisfies
  \begin{gather*}
    \vphi(u_w) = \sum_{w' \leq w} c_{w',w} T_{w'}
  \end{gather*}
  for certain coefficients $c_{w',w} \in R$ satisfying $c_{w,w} \in R^\times$.
\end{proposition}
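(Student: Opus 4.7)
The plan is to proceed in three steps: verifying that $\varphi$ is well defined as an $R$-algebra homomorphism, establishing the triangular expansion formula by induction on length, and finally deducing bijectivity from Lemma~\ref{lem:order-preserving-base-change}. For the first step, since $(W,S)$ is right-angled, $RW$ has the $R$-algebra presentation with generators $u_s$, $s\in S$, and relations $u_s^2 = 1$ together with $u_s u_t = u_t u_s$ whenever $m_{st}=2$ (the case $m_{st}=\infty$ imposing no relation). The quadratic relation for the candidate images reduces to direct computation: using $T_s^2 = a_s T_s + b_s$ one finds $(c_s T_s + d_s)^2 = (c_s^2 a_s + 2 c_s d_s) T_s + (c_s^2 b_s + d_s^2)$, and the choices $d_s = -c_s a_s/2$ (requiring $\mathrm{char}\,R\neq 2$) and $c_s^2 = (b_s + a_s^2/4)^{-1}$ make this equal to $1$. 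The commutation relation for $m_{st}=2$ reduces to showing $T_s T_t = T_t T_s$, which holds because both $st$ and $ts$ are reduced words of length $2$ representing the same element of $W$, so $T_s T_t = T_{st} = T_{ts} = T_t T_s$.

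For the expansion formula, I would induct on $|w|$, with the trivial base case $\varphi(u_e) = 1 = T_e$ giving $c_{e,e} = 1$. For $|w|\geq 1$, pick an initial letter $s\in S$ (so $|sw| < |w|$) and write $\varphi(u_w) = (c_s T_s + d_s)\varphi(u_{sw})$. The inductive hypothesis expands $\varphi(u_{sw}) = \sum_{x\leq sw} c_{x,sw} T_x$ with $c_{sw,sw}\in R^\times$. Multiplying termwise, the rule $T_s T_x = T_{sx}$ (if $|sx|>|x|$) or $T_s T_x = a_s T_x + b_s T_{sx}$ (otherwise) shows that every term appearing in $\varphi(u_w)$ is a scalar multiple of $T_y$ with $y\in\{x, sx\}$ for some $x\leq sw$. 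The Bruhat lifting property, applied with $s\in D_L(w)$, guarantees that both $x\leq w$ and $sx\leq w$ whenever $x\leq sw$, so every index $y$ that appears satisfies $y\leq w$. Tracking the coefficient of $T_w$: it arises only from the term $c_s\cdot c_{sw,sw}\cdot T_s T_{sw} = c_s c_{sw,sw} T_w$, since $|w|>|sw|$ places us in the first case of the multiplication rule, while every other product yields some $T_y$ with $|y|<|w|$. Thus $c_{w,w} = c_s c_{sw,sw} \in R^\times$ by the inductive hypothesis.

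With the expansion formula established, Lemma~\ref{lem:order-preserving-base-change} applies to $(W,\leq_{\text{Bruhat}})$ with $v_w = T_w$ and $w_w = \varphi(u_w)$, yielding linear independence of $(\varphi(u_w))_w$; since $(u_w)_w$ is an $R$-basis of $RW$, $\varphi$ is injective. For surjectivity, inverting the triangular relation gives $T_w = c_{w,w}^{-1}\bigl(\varphi(u_w) - \sum_{w'<w} c_{w',w} T_{w'}\bigr)$, and a second induction on $|w|$ shows $T_w\in\varphi(RW)$. The main technical point, and the only real obstacle, is invoking the Bruhat lifting property in the inductive step to control which basis elements $T_y$ can appear after multiplying by $T_s$; once this triangular structure is in place, the rest is a routine inversion of exactly the kind Lemma~\ref{lem:order-preserving-base-change} is designed to handle.
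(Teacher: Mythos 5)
Your proposal is correct and follows essentially the same route as the paper: verify the quadratic and commutation relations to obtain the homomorphism $\vphi$, establish the Bruhat-triangular expansion of $\vphi(u_w)$ with unit diagonal coefficients, deduce injectivity from Lemma~\ref{lem:order-preserving-base-change}, and get surjectivity from the invertibility of the $c_s$. The only difference is that you supply the inductive argument (via the Bruhat lifting property) for the triangularity that the paper dismisses as clear, and you invert the triangular system for surjectivity where the paper simply notes the image is a subalgebra containing the generators $T_s$; both variations are sound.
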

\begin{proof}
  We write $\cH = \cH(W, S, (a_s)_{s \in S}, (b_s)_{s \in S})$. The elements $c_sT_s + d_s$ are invertible and self-inverse in $\cH$.  Indeed, we have
  \begin{gather*}
    (c_sT_s - d_s)^2
    =
    c_s^2 (a_s T_s + b_s) + 2c_sd_s T_s + d_s^2
    =
    c_s^2 (a_s T_s + b_s) - a_sc_s^2 T_s + c_s^2\frac{a_s^2}{4}
    =
    c_s^2(b_s + \frac{a_s^2}{4})
    =
    1
    \eqstop
  \end{gather*}
  Assuming $[s,t] = e$ in $W$, we also obtain $[c_sT_s + d_s, c_t T_t + d_t]  = 0$ in $\cH$.  So there is a uniquely determined $R$-algebra homomorphism $\vphi \colon R W \lra \cH$ satisfying $\vphi(u_s) = c_s T_s + d_s$ for all $s \in S$.  It is clear that $\vphi(u_w) = \sum_{w' \leq w} c_{w',w} T_{w'}$ for certain coefficients $c_{w',w} \in R$, with $c_{w,w} \neq 0$.  Since the $c_s, s \in S$ are invertible by assumption, the image of $\vphi$ is a unital $R$-subalgebra of $\cH$ containing its generators $T_s, s \in S$.  It follows that $\vphi$ is surjective.   Further, Lemma~\ref{lem:order-preserving-base-change} implies that the family $(\vphi(u_w))_{w \in W}$ is $R$-linearly independent, which implies that $\vphi$ is injective.
\end{proof}

Note that the isomorphism above need not extend to the natural reduced C*-completions (when these are available); this is the key feature making the study of $q$-deformed Hecke operator algebras interesting.

\subsection{Unconditional seminorms}
\label{sec:unconditional-seminorms}

Below we recall the notion of unconditional seminorms introduced by Bost and Lafforgue. In Puschnigg's work on smooth subalgebras of reduced group C*-algebras of hyperbolic groups \cite[Definition 2.1]{puschnigg2010}, they play a crucial role.  We also recall the notion of unconditional Fr{\'e}chet algebras from \cite[Definition 2.6]{puschnigg2010} needed for our work.
\begin{definition}
  \label{def:unconditional-seminorm}
  A seminorm $\| \cdot \|$ on a  complex vector space with a fixed basis, $(V, (v_i)_{i \in I})$, is unconditional if $\|\sum_i a_i v_i\| \leq \|\sum_i b_i v_i\|$ follows whenever $|a_i| \leq |b_i|$ holds for all $i \in I$.  A complex  algebra  with a fixed basis $(A, (a_i)_{i \in I})$ is called unconditional, if the kernel function of its multiplication has positive coefficients; in other words, if $a_i a_j = \sum_k c_{i,j}^k a_k$ for some $c_{i,j}^k \geq 0$.  The completion of an unconditional algebra with respect to a family of unconditional seminorms is called an unconditional Fr{\'e}chet algebra.
\end{definition}

  Given based vector spaces $(U, (u_i)_{i \in I})$ and $(V, (v_j)_{j \in J})$ with unconditional seminorms, \cite[Definition and Lemma 2.5]{puschnigg2010} introduces the largest unconditional seminorm on $(U \ot V, (u_i \ot v_j)_{(i,j) \in I \times J})$ satisfying $\|u \otimes v\|_{\mathrm{uc}} = \|u\| \|v\|$ for all $u \in U$, $v \in V$.  This leads to the notion of unconditional tensor product of unconditional Fr{\'e}chet spaces, which will be used in Section~\ref{sec:smooth-subalgebras-hecke} and onwards.  As shown in \cite[Lemma 2.7]{puschnigg2010}, the multiplication on an unconditional Fr{\'e}chet algebra is bounded with respect to the unconditional tensor product.

\section{Combinatorics of multiplication in right-angled Hecke algebras}
\label{sec:combinatorics-hecke-algebra}

In this section, we describe the multiplication of basis elements $(T_w)_{w \in W}$ in right-angled Hecke algebras in terms of word combinatorics.  We are interested in a combinatioral description of a small subset $A \subseteq W$ such that $T_wT_x$ is supported in $A$, that is $T_wT_x = \sum_{z \in A} c_zT_z$ for certain coefficients $c_z$.  While this question has a simple answer in group algebras, a description of multiplication in Hecke algebras has to take into account the fact that if $ws$ is shorter than $w$ then the product $T_wT_s$ admits the extra term proportional to $T_w$, more specifically $T_w T_s = a_s T_w + b_s T_{ws}$.  As this occurs precisely if the group element $w$ ends with the letter $s$, we think of skipping cancellation to obtain the extra term.  This perspective motivates the following terminology.
\begin{notation}
  \label{not:skipping-multiplication}
  Let $(W, S)$ be a Coxeter system.  Let $w = s_1 \dotsm s_m$ and $x = t_1 \dotsm t_n$ be reduced words in $W$.  Given $1 \leq j_1 < \dotsm < j_k \leq n$ such that for all $a \in \{1, \dotsc, k\}$ the group element $s_1 \dotsm s_m t_1 \dotsm \hat t_{j_1} \dotsm \hat t_{j_{a - 1}} \dotsm t_{j_a - 1}$  ends with $t_{j_a}$, we say that $s_1 \dotsm s_m t_1 \dotsm \hat t_{j_1} \dotsm \hat t_{j_k} \dotsm t_n$ arises from \emph{multiplying the word $s_1 \dotsm s_m$ with $t_1 \dotsm t_n$ while skipping cancellation at positions $j_1,\dotsc, j_k$}.  We then say that \emph{when multiplying $w$ and $x$, the cancellation can be skipped at positions $j_1,\dotsc, j_k$}.
\end{notation}
Let us make an additional comment on the precise formulation in the previous notation.  It picks up the definition of products $T_wT_t$ for a Coxeter generator $t$ and applies it iteratively to a product $T_wT_x = T_wT_{t_1}T_{t_2} \dotsm T_{t_n}$.  This leads to the description of elements that arise from multiplying $T_{s_1 \dotsm s_m t_1 \dotsm \hat t_{j_1} \dotsm \hat t_{j_{a - 1}} \dotsm t_{j_a - 1}}$ with the next factor of the term $T_{t_{j_a}} T_{t_{j_a + 1}} \dotsm T_{t_n}$.

\begin{theorem}
  \label{thm:skipping-cancellation-normal-form}
  Let $(W, S)$ be a right-angled Coxeter system and let $w = s_1 \dotsm s_m$, $x= t_1 \dotsm t_n$ be reduced words such that when multiplying them, cancellation can be skipped at positions $1\leq j_1 < \dotsm < j_k \leq n$.  Then
  \begin{itemize}
  \item $[t_{j_a}, t_{j_b}] = e$ for all $a,b \in \{1, \dotsc, k\}$;
  \item there exist words $w_1, x_1, x_2 \in W$ such that the decompositions $w = w_1 t_{j_1} \dotsm t_{j_k} x_1^{-1}$ and $x = x_1 t_{j_1} \dotsm t_{j_k} x_2$ are reduced and further  $w_1 t_{j_1} \dotsm t_{j_k} x_2$ is a reduced expression.
  \end{itemize}
\end{theorem}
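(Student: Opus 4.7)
The plan is to induct on $k$, the number of skipped positions, and to use Proposition~\ref{prop:racs-cancellation} as the principal combinatorial input at each step.  For the base case $k = 0$, assertion $(i)$ is vacuous and assertion $(ii)$ reduces to the well-known ``shuffle decomposition'' in Coxeter groups: for any $w, x \in W$ there exist $w_1, x_1, x_2 \in W$ such that $w = w_1 x_1^{-1}$ and $x = x_1 x_2$ are reduced and $w_1 x_2 = wx$ is reduced.  In the right-angled setting this can be obtained by iteratively stripping off an initial letter of a suitable reduced expression of $x$ that occurs as a terminal letter of the current accumulated product, using Proposition~\ref{prop:racs-cancellation} to identify where each cancellation takes place.

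For the inductive step, suppose the theorem has been proved for $k - 1$.  Given a valid skip sequence $J = \{j_1 < \dotsm < j_k\}$, I first observe that $J' = \{j_1, \dotsc, j_{k-1}\}$ is itself valid, since the skipping condition at position $j_a$ depends only on earlier skipped positions.  Applying the inductive hypothesis to $J'$ yields the pairwise commutativity of $t_{j_1}, \dotsc, t_{j_{k-1}}$ together with reduced decompositions
\begin{gather*}
  w = w_1' \, t_{j_1} \dotsm t_{j_{k-1}} \, (x_1')^{-1}, \quad x = x_1' \, t_{j_1} \dotsm t_{j_{k-1}} \, x_2',
\end{gather*}
such that $w_1' \, t_{j_1} \dotsm t_{j_{k-1}} \, x_2'$ is reduced.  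I then incorporate the $k$-th skip condition: writing $P_k$ for the prefix $t_1 \dotsm t_{j_k - 1}$ of $x$ with positions in $J'$ removed, the group element $y^{(k)} := w \cdot P_k$ ends with $t_{j_k}$.  Rewriting $y^{(k)}$ via the inductive decompositions of $w$ and of the relevant prefix of $x$, and then applying Proposition~\ref{prop:racs-cancellation} to a reduced expression for $y^{(k)}$, pinpoints the unique position of the letter $t_{j_k}$ that commutes with every letter to its right.

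The main obstacle is tracing this distinguished position through the inductive structure to extract both the new commutativity relation and the refined reduced decompositions.  Using Tits' solution to the word problem in right-angled Coxeter systems, I arrange a reduced expression for $y^{(k)}$ in which the inductive middle block $t_{j_1} \dotsm t_{j_{k-1}}$ lies entirely to the right of the identified position of $t_{j_k}$; Proposition~\ref{prop:racs-cancellation} then forces $[t_{j_k}, t_{j_a}] = e$ for every $a < k$, which together with the inductive commutativity gives claim $(i)$.  Exploiting these freshly established relations, the letter $t_{j_k}$ can be commuted into the middle block, and the corresponding adjustment of $w_1', x_1', x_2'$ by commutation moves produces the desired words $w_1, x_1, x_2$ for claim $(ii)$.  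Reducedness of the three resulting decompositions is checked by length accounting against the inductive reduced lengths and the extra letter $t_{j_k}$; the key verification is that no new cancellation is introduced when $t_{j_k}$ is inserted, which is precisely what the commutation relations and Proposition~\ref{prop:racs-cancellation} guarantee.
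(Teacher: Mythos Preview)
Your overall strategy matches the paper's: induct on $k$ and use Proposition~\ref{prop:racs-cancellation} at each step.  However, there is a gap at the crucial point.  You apply the induction hypothesis to $w$ and the \emph{full} word $x$ with skip set $J' = \{j_1,\dotsc,j_{k-1}\}$, obtaining a decomposition of $x$; but the quantity $y^{(k)} = w\cdot P_k$ that carries the $k$-th skip condition depends only on the \emph{prefix} $t_1\dotsm t_{j_k-1}$.  Your sentence ``rewriting $y^{(k)}$ via the inductive decompositions of $w$ and of the relevant prefix of $x$'' has no backing, since you never decomposed that prefix; and the subsequent claim that in some reduced expression for $y^{(k)}$ the middle block $t_{j_1}\dotsm t_{j_{k-1}}$ lies entirely to the right of the distinguished occurrence of $t_{j_k}$ is exactly what needs proving and is left unjustified.

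The paper resolves this by applying the induction hypothesis to $w$ and the prefix $t_1\dotsm t_{j_k-1}$ instead.  Then $t_{j_k}$ is literally the next letter of $x$, and the element required to end in $t_{j_k}$ is already the inductive reduced expression $w_1' \, t_{j_1}\dotsm t_{j_{k-1}}\, x_2'$.  Since the full expression for $x$ is reduced, so is its subword $t_{j_1}\dotsm t_{j_{k-1}}\, x_2'\, t_{j_k}$; hence $t_{j_k}$ cannot be a terminal letter of $t_{j_1}\dotsm t_{j_{k-1}}\, x_2'$, and Proposition~\ref{prop:racs-cancellation} forces the terminal $t_{j_k}$ to come from $w_1'$ and to commute with every letter of $t_{j_1}\dotsm t_{j_{k-1}}\, x_2'$.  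This delivers both the new commutation relations and the updated decomposition of $w$ at once.  The price of working with the prefix is that the remaining letters $t_{j_k}\, t_{j_k+1}\dotsm t_n$ of $x$ must then be processed separately; the paper does this with a short iterative argument, absorbing any further cancellations into $x_1$.  Your sketch, by using the full $x$ from the outset, tries to sidestep this suffix step, but in doing so leaves the central commutation argument without support.
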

\begin{proof}
  We prove the given statement by induction on $k \in \NN_0$. For $k = 0$ the statement follows from standard Coxeter combinatorics.  Assume it holds for some $k \in \NN_0$ and consider reduced words $w = s_1 \dotsm s_m$ and $x = t_1 \dotsm t_n$ such that when multiplying them, cancellation can be skipped at positions $j_1 < \dotsm < j_k < j_{k+1}$.  Then cancellation can be skipped at positions $j_1, \dotsc, j_k$ when multiplying the words $s_1 \dotsm s_m$ and $t_1 \dotsm t_{j_{k+1} - 1}$.  By induction hypothesis we infer that $[t_{j_a}, t_{j_b}] = e$ holds for all $a,b \in \{1, \dotsc, k\}$ and there are reduced decompositions $w = w_1 t_{j_1} \dotsm t_{j_k} x_1^{-1}$ and $x = x_1 t_{j_1} \dotsm t_{j_k} x_2 t_{j_{k+1}} x_3$ such that $w_1 t_{j_1} \dotsm t_{j_k} x_2$ is a reduced expression and $x_1 t_{j_1} \dotsm t_{j_k} x_2  = t_1 \dotsc t_{j_{k+1} - 1}$ as elements of $W$.  As cancellation can also be skipped at $j_{k+1}$ when multiplying the chosen words representing $w$ and $x$, the group element $w_1 t_{j_1} \dotsm t_{j_k} x_2$ ends with $t_{j_{k+1}}$.  As $x_1 t_{j_1} \dotsm t_{j_k} x_2 t_{j_{k+1}} x_3$ is reduced, also the subword $t_{j_1} \dotsm t_{j_k} x_2 t_{j_{k+1}}$ is reduced.  Hence, $t_{j_{k+1}}$ is a terminal letter of $w_1$ and by Proposition~\ref{prop:racs-cancellation} it commutes with all letters appearing in $t_{j_1} \dotsm t_{j_k} x_2$.  It follows that $[t_{j_a}, t_{j_b}] = e$ holds for all $a,b \in \{1, \dotsc, k+1\}$.  Further, removing a suitable occurrence of $t_{j+1}$ from $w_1$, we obtain a subexpression $w_1' \leq w_1$ such that $w = w_1' t_{j_1} \dotsm t_{j_k} t_{j_{k+1}} x_1^{-1}$ is a reduced decomposition and $w_1' t_{j_1} \dotsm t_{j_k} t_{j_{k+1}} x_2$ is a reduced expression.  Also notice that $x = x_1 t_{j_1} \dotsm t_{j_k} t_{j_{k+1}} x_2 x_3$ is a reduced decomposition.

  If $x_3$ is the empty word, the proof is finished.  Otherwise, let $x_3 = u_1 \dotsm u_l$ be a reduced word and let $i \in \{0, 1, \dotsc, l\}$ be maximal such that
  \begin{gather*}
    |w_1' t_{j_1} \dotsm t_{j_k} t_{j_{k+1}} x_2u_1 \dotsm u_i| = |w_1' t_{j_1} \dotsm t_{j_k} t_{j_{k+1}} x_2| + i
    \eqstop
  \end{gather*}
  If $i < l$, we infer from Proposition~\ref{prop:racs-cancellation} that $u_{i+1}$ commutes with the letters of $t_{j_1} \dotsm t_{j_{k+1}} x_2 u_1 \dotsc u_i$ and it is a terminal letter of $w_1'$.  So removing an occurrence of $u_{i+1}$ from $w_1'$, we obtain a subexpression $w_1'' \leq w_1'$  such that $w = w_1'' t_{j_1} \dotsm t_{j_k} t_{j_{k+1}} u_{i+1}x_1^{-1}$ is a reduced decomposition.  Also $x = x_1 u_{i+1} t_{j_1} \dotsm t_{j_k} t_{j_{k+1}} x_2 u_1 \dotsm u_{i-1} u_{i+1} \dotsm u_l$ is a reduced decomposition.  Inductively, this provides subexpressions $w_1'' \leq w_1'$ and $x_3' \leq x_3$ together with an element $y \in W$ such that
  \begin{itemize}
  \item $y$ commutes with all letters from $t_{j_1} \dotsm t_{j_{k+1}} x_2$, 
  \item $w_1' = w_1'' y$ is a reduced decomposition,
  \item $w = w_1'' t_{j_1} \dotsm t_{j_k} t_{j_{k+1}} y x_1^{-1}$ is a reduced decomposition, 
  \item $x = x_1 y^{-1} t_{j_1} \dotsm t_{j_k} t_{j_{k+1}} x_2 x_3'$ is a reduced decomposition, and 
  \item $|w_1'' t_{j_1} \dotsm t_{j_k} t_{j_{k+1}} x_2x_3'| = |w_1'' t_{j_1} \dotsm t_{j_k} t_{j_{k+1}} x_2| + |x_3'|$.
  \end{itemize}
  It follows that $w_1'' t_{j_1} \dotsm t_{j_k} t_{j_{k+1}} x_2y$ is a reduced expression and so is its initial piece $w_1'' t_{j_1} \dotsm t_{j_k} t_{j_{k+1}} x_2$.  We can conclude that also $w_1'' t_{j_1} \dotsm t_{j_k} t_{j_{k+1}} x_2x_3'$ is a reduced expression.  This finishes the proof.
\end{proof}

In order to formulate consequences of Theorem~\ref{thm:skipping-cancellation-normal-form} for Hecke algebras, we introduce some more notation.
\begin{notation}
  \label{not:product-set-hecke-algebra}
  Let $(W, S)$ be a Coxeter system and $w, x \in W$.  For a deformation parameter $q \in \RR_{>0}^{(W,S)}$, the product set of $w, x$ in the associated Iwahori-Hecke algebra is $\mathrm{Prod}(w,x) = \supp(T_w T_x)$.  The coefficients $p_{u,w,x}$ of the product, are defined by the formula $T_wT_x = \sum_{u \in \mathrm{Prod}(w,x)} p_{u,w,x} T_u$.

  Note that $\mathrm{Prod}(w,x)$ only depends on the set $\{s \in S \mid q_s = 1\}$.  In particular, $\mathrm{Prod}(w,x) = \{w x\}$ for $q \equiv 1$.
\end{notation}

\begin{remark}
  \label{rem:product-set-hecke-algebra-cancellation}
  Consider the setup of Notation~\ref{not:product-set-hecke-algebra}.  By the definition of multiplication in Hecke algebras, for every $w,x \in W$ elements with given reduced expressions $w = s_1 \dotsm s_m$ and $x = t_1 \dotsm t_n$, every element in $\mathrm{Prod}(w,x)$ arises by skipping cancellation at some positions when multiplying the given reduced words.  This makes Theorem~\ref{thm:skipping-cancellation-normal-form} applicable to describe elements of $\mathrm{Prod}(w,x)$.
\end{remark}

The following corollary leverages Remark~\ref{rem:product-set-hecke-algebra-cancellation} in order to give (crude) bounds on the size of product sets in Hecke algebras.
\begin{corollary}
  \label{cor:hecke-multiplication-combinatorics}
  Let $(W, S)$ be a right-angled Coxeter system.  Given $w,x \in W$ the following statements hold true for every deformation parameter $q \in \RR_{> 0}^S$.
  \begin{itemize}
  \item $|\mathrm{Prod}(w,x)| \leq (1 + \min\{|w|, |x|\})^{|S|}$.
  \item If $W$ is additionally hyperbolic, then there is a constant $C > 0$ such that for all $0 \leq n \leq |w| + |x|$ we have $|\{u \in \mathrm{Prod}(w,x) \mid |u| = n\}| \leq C$.
  \item For every $u \in \mathrm{Prod}(w,x)$  we have $p_{u, w,x} \leq \prod_{s \in S} p_s$.
  \end{itemize}
\end{corollary}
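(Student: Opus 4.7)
\emph{Outline.} All three bounds reduce to Theorem~\ref{thm:skipping-cancellation-normal-form}, applied via Remark~\ref{rem:product-set-hecke-algebra-cancellation} to a chosen pair of reduced expressions $w = s_1 \dotsm s_m$ and $x = t_1 \dotsm t_n$.  The common preparatory step is the following distinctness observation: if $u \in \mathrm{Prod}(w,x)$ arises by skipping cancellations at positions $j_1 < \dotsm < j_k$, then by Theorem~\ref{thm:skipping-cancellation-normal-form} the letters $t_{j_1}, \dotsc, t_{j_k}$ pairwise commute and the block $t_{j_1} \dotsm t_{j_k}$ is a reduced factor of a reduced decomposition of $x$.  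A reduced word of pairwise commuting involutions in a right-angled Coxeter system must have all letters distinct, for otherwise Tits' solution to the word problem would shorten it.  Hence $\{t_{j_1}, \dotsc, t_{j_k}\}$ is a subset of $S$ of cardinality $k$.

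\emph{Proof of (iii).}  Tracking $T_w T_{t_1} \dotsm T_{t_n}$ one letter at a time, each extension contributes a factor $1$, each genuine cancellation a factor $b_{t_j} = 1$, and each skipped cancellation a factor $a_{t_j} = p_{t_j}$.  Thus $p_{u,w,x} = \prod_{a=1}^{k} p_{t_{j_a}}$, which by the distinctness observation is a product over a subset of $S$ and is therefore bounded by $\prod_{s \in S} p_s$.

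\emph{Proof of (i).}  To $u \in \mathrm{Prod}(w,x)$ I would associate the tuple $(c_s)_{s \in S}$, where $c_s$ counts the number of genuine (non-skipped) $s$-cancellations along the chosen sequence of operations.  Since each cancellation pairs an $s$-letter of $x$ with an $s$-letter of $w$, one has $c_s \leq \min(|w|_s, |x|_s) \leq m$.  A verification using Theorem~\ref{thm:skipping-cancellation-normal-form} together with the rigidity of reductions in right-angled Coxeter systems shows that $(c_s)_{s \in S}$ is independent of the chosen decomposition and uniquely determines $u$.  This embeds $\mathrm{Prod}(w,x)$ into $\{0,1,\dotsc,m\}^{S}$, whence $|\mathrm{Prod}(w,x)| \leq (1+m)^{|S|}$.

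\emph{Proof of (ii) and main obstacle.}  Assume $(W,S)$ is in addition hyperbolic, fix $n$, and let $u \in \mathrm{Prod}(w,x)$ have length $n$.  By Theorem~\ref{thm:skipping-cancellation-normal-form}, $u = w_1 \sigma x_2$ with $w_1 \sigma$ an initial segment of a reduced expression for $w$ and, after left-translation by $w_1$, with $\sigma x_2$ a terminal segment of a reduced expression for $x$.  Consequently $u$ lies within distance at most $|\sigma| \leq |S|$ of a concatenation of two geodesics running from $e$ to $w$ and from $w$ to $wx$ in the Cayley graph of $W$.  The $\delta$-thinness of geodesic triangles in a hyperbolic Cayley graph then forces such length-$n$ elements $u$ into a thin tubular neighbourhood of any geodesic from $e$ to $wx$, which contains only boundedly many elements of any fixed length.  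Converting the word-combinatorial content of Theorem~\ref{thm:skipping-cancellation-normal-form} into a clean geometric statement and extracting an explicit constant $C = C(\delta, |S|)$ is the main technical hurdle of this corollary; parts (i) and (iii) follow quickly from the distinctness observation above.
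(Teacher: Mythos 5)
Your distinctness observation and part (iii) are correct and agree with the paper's proof. The problems are in parts (i) and (ii).

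In (i) the entire weight of the argument rests on the sentence ``a verification \dots shows that $(c_s)_{s\in S}$ is independent of the chosen decomposition and uniquely determines $u$'', and that verification is the hard part, not a formality. In the notation of Theorem~\ref{thm:skipping-cancellation-normal-form} your tuple is $(|x_1|_s)_{s}$, so you must prove (a) that a prefix $x_1$ of the fixed reduced word $x$ is determined by its letter-content vector (true in the right-angled case because two occurrences of the same letter in a reduced word are separated by a non-commuting letter, so the occurrences of each letter form a chain in the heap --- but this needs to be said), and (b) that $x_1$ in turn determines the skipped clique $\sigma$ (true because a letter $t\in\sigma'\setminus\sigma$ admissible for the same $x_1$ would be a terminal letter of $w_1$ and an initial letter of $x_2$ commuting with $\sigma$, contradicting reducedness of $w_1\sigma x_2$). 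The paper avoids all of this by counting in the other direction: since the skipped letters are pairwise distinct, the skip data is a map $S\to\{0,1,\dots,|x|\}$ recording the position of each skipped letter, and the skip data determines $u$ by definition; no injectivity from $u$ is required, and the bound $(1+\min\{|w|,|x|\})^{|S|}$ follows from the symmetry $\mathrm{Prod}(w,x)=\mathrm{Prod}(x^{-1},w^{-1})^{-1}$. Your route is completable but, as written, has a real gap.

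Part (ii) is not a proof: you explicitly defer ``the main technical hurdle'', and the geometric mechanism you propose is false. Take $W=\langle s,t,r\rangle$ a free product of three copies of $\ZZ/2\ZZ$, $w=s$ and $x=s(tr)^N$; then $u=s(tr)^N\in\mathrm{Prod}(w,x)$ (skip the cancellation at the first letter), $\sigma=\{s\}$, yet $u$ is at distance $2N$ from the concatenation of the geodesics $[e,w]$ and $[w,wx]$ and at distance at least $2N$ from the geodesic $[e,wx]=[e,(tr)^N]$. So neither ``$u$ lies within $|\sigma|\le|S|$ of the concatenation'' nor ``$u$ lies in a thin neighbourhood of a geodesic from $e$ to $wx$'' holds, and the intended conclusion cannot be extracted this way. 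The paper's argument is of a different shape: from $|u|=|w|+|x|-n$ and $n=2|x_1|+|\sigma|$ one confines $|x_1|$ to an interval of length at most $|S|$ around $n/2$, so that $u$ is determined by a suffix $y$ of $w$ with $|y|$ in that interval; $\delta$-thinness is then applied to the quadrilateral with corners $e$, $wy_1^{-1}$, $wy_2^{-1}$, $w$ to show that any two suffixes of $w$ of equal length differ by an element of $\rB_{2\delta}(e)$, which yields $C=(2|S|+1)|\rB_{2\delta}(e)|$. The hyperbolicity is used to count same-length suffixes of $w$, not to place $u$ near a geodesic.
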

\begin{proof}
Fix reduced expressions of length $m = |w|$ and $n = |x|$ for the group elements $w$ and $x$, respectively.  By Remark~\ref{rem:product-set-hecke-algebra-cancellation}, for element $u \in \mathrm{Prod}(w,x)$ there are indices $1 \leq i_1 < \dotsm < i_l \leq n$ such that $u$ arises from from multiplying the reduced expressions for $w$ and $x$, while skipping cancellation at positions $i_1, \dotsc, i_l$.  By Theorem~\ref{thm:skipping-cancellation-normal-form} the letters $t_{i_1}, \dotsc, t_{i_l}$ are pairwise different, as otherwise $w_1 t_{j_1}\cdots t_{j_k} x^{-1}$ could be reduced.  So $u$ is uniquely defined by a map $S \lra \{0, 1, \dotsc, |x|\}$, where a letter in $S$ is mapped to $0$ if it is not skipped.  Hence $|\mathrm{Prod}(w,x)| \leq (1 + |x|)^{|S|}$.  Since $\mathrm{Prod}(w,x) = \mathrm{Prod}(x^{-1}, w^{-1})^{-1}$, we also find $|\mathrm{Prod}(w,x)| \leq (1 + |w^{-1}|)^{|S|} = (1 + |w|)^{|S|}$.  This shows the first claim of the corollary.

  We now assume that $W$ is hyperbolic and prove the second claim.   For $0 \leq n \leq |w| + |x|$ consider $u \in \mathrm{Prod}(w,x)$ satisfing $|u| = |w| + |x| - n$.  By Remark~\ref{rem:product-set-hecke-algebra-cancellation}, we can apply Theorem~\ref{thm:skipping-cancellation-normal-form} to find reduced decompositions $w = w_1 t_1 \dotsc t_k x_1^{-1}$ and $x = x_1 t_1 \dotsc t_k x_2$, such that the elements $t_1, \dotsc, t_k$ commute pairwise and $u = w_1 t_1 \dotsc t_k x_2$ is a reduced decomposition.  It follows that $n = 2|x_1| + k$ and $k \leq |S|$.  We infer that $n/2 - |S| \leq |x_1| \leq n/2 + |S|$.

  Let $n/2 - |S| \leq l \leq n/2 + |S|$ and let $y_1,y_2 \in W$ satisfy $|y_1| = |y_2| = l$ and $|w y_1^{-1}| = |w| - l = |wy_2^{-1}|$.  Let $\delta > 0$ be a hyperbolicity constant for the Cayley graph of $(W, S)$.  Consider the quadrilateral whose corners are $e, wy_1^{-1}, wy_2^{-1}, w$,  whose geodesics connected with $e$ are given by $wy_1^{-1}, wy_2^{-1}$ and whose geodesics connected with $w$ are given by $y_1, y_2$.  Hyperbolicity applied to this quadrilateral shows that
  \begin{gather*}
    |w| + |(wy_1^{-1})^{-1}(w y_2)|
    \leq
    \max\{|wy_1^{-1}| + |y_1|, |wy_2^{-1}| + |y_2|\}  + 2\delta
    =
    |w| + 2\delta
    \eqstop
  \end{gather*}
  So we infer that $|y_1^{-1}y_2| \leq 2 \delta$.  

  Summarising the arguments above, we find that
  \begin{align*}
    & |\{u \in \mathrm{Prod} \mid |u| = |w| + |x| - n\}|  \\
     \leq &
    |\{y \in W \mid \frac{n}{2} - |S| \leq |y| \leq \frac{n}{2} + |S| \text { and } |wy^{-1}| = |w| - |y|\}| \\
    \leq &
           (2|S| + 1) |\rB_{2\delta}(e)|
    \eqstop
  \end{align*}
  So the second statement follows for $C = (2 |S| + 1) |\rB_{2\delta}(e)|$.

  Let us prove the last claim of the corollary.  Fix $u \in \mathrm{Prod}(w,x)$.  By Theorem~\ref{thm:skipping-cancellation-normal-form} there are reduced decompositions $w = w_1 t_1\dotsm t_l x_1^{-1}$ and $x = x_1 t_1 \dotsm t_l x_2$ such that $t_1, \dotsc, t_l \in S$ commute pairwise and $u = w_1t_1\dotsm t_l x_2$ is a reduced decomposition.  So by definition of the Iwahori-Hecke algebra multiplication, we obtain $p_{u,w,x} = p_{t_1} \dotsm p_{t_l} \leq \prod_{s \in S} p_s$.
\end{proof}

The following result connects cyclic reduction in right-angled Coxeter groups with the associated product in its Hecke algebra.
\begin{proposition}
  \label{prop:minimal-representatives-hecke-conjugation}
  Let $(W, S)$ be a right-angled Coxeter system and let $w \in W$.  Assume that  $w = x w' x^{-1}$ (for some $x,w' \in W$) is a reduced decomposition such that $w'$ is of minimal length in its conjugacy class.  Then every element in $\mathrm{Prod}(x^{-1}, x w')$ is of minimal length in its conjugacy class.
\end{proposition}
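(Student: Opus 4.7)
My approach is to reduce the statement to showing that $z \in \mathrm{Prod}(x^{-1}, xw')$ is cyclically reduced. By Theorem~\ref{thm:cyclic-reduction-shifts-right-angled}, an element of a right-angled Coxeter group is of minimal length in its conjugacy class if and only if it is cyclically reduced, i.e., $|szs| \geq |z|$ for every $s \in S$; equivalently, $z$ admits no reduced expression of the form $s z_0 s$ with $|z_0| = |z|-2$. Applying Theorem~\ref{thm:skipping-cancellation-normal-form} to $(x^{-1}, xw')$, and using that $xw' = x \cdot w'$ is itself reduced, each such $z$ admits a reduced expression $z = u_1 r_1 \dotsm r_l u_1^{-1} w'$ of length $2|u_1| + l + |w'|$, where $r_1, \dotsc, r_l \in S$ pairwise commute and there is $v_1 \in W$ with reduced decompositions $x = v_1 r_1 \dotsm r_l u_1^{-1}$ and $xw' = v_1 r_1 \dotsm r_l u_1^{-1} w'$.

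The heart of the argument is a structural constraint on the terminal letters of $x$. For any such letter $s$, writing $x = x_0 s$ reduced, the reduced decomposition $w = x_0 \cdot s w' s \cdot x_0^{-1}$ of length $2|x| + |w'|$ forces $sw's$ to be reduced of length $|w'| + 2$. Unpacking this, $s$ is neither an initial nor a terminal letter of $w'$, and $s$ does not commute with every letter of $w'$. Two consequences follow: every initial letter of $u_1$ is a terminal letter of $x$ (it is an initial letter of $x^{-1} = u_1 r_1 \dotsm r_l v_1^{-1}$), and any $r_i$ that commutes with every letter of $u_1$ can be moved to the front of that reduced expression of $x^{-1}$, exhibiting $r_i$ as a terminal letter of $x$ as well. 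Finally, $w' x^{-1}$ being reduced implies $w' u_1$ is reduced, so no terminal letter of $w'$ is an initial letter of $u_1$.

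Suppose now, for contradiction, $z = s z_0 s$ is reduced for some $s \in S$. Then there exist two distinct occurrences $\alpha, \beta$ of $s$ in the invariant letter-multiset of $z$ such that $\alpha$ commutes past all earlier letters of the standard expression $u_1 r_1 \dotsm r_l u_1^{-1} w'$, and $\beta$ past all later letters. Case analysis on the blocks among $u_1, r_1\dotsm r_l, u_1^{-1}, w'$ containing $\alpha$ and $\beta$ yields a contradiction in every configuration. If $\alpha$ lies in $u_1$, or if $\alpha = r_i$ commutes with all of $u_1$, then $s$ is a terminal letter of $x$, so by the previous paragraph $s$ avoids the initial and terminal letters of $w'$ and does not commute with all of $w'$; however, every placement of $\beta$ in blocks 2, 3 or 4 violates one of these constraints, namely $\beta = r_j$ or $\beta \in u_1^{-1}$ forces $s$ to commute with all of $w'$, and $\beta \in w'$ forces $s$ to be a terminal letter of $w'$. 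Symmetric arguments handle $\alpha \in u_1^{-1}$ and $\alpha \in w'$. When both $\alpha$ and $\beta$ lie in $u_1$ (respectively, both in $u_1^{-1}$), the combined commutation conditions force $s$ to commute with every letter of $u_1$, so two distinct occurrences of $s$ within $u_1$ may be commuted adjacent to each other and cancelled, contradicting the reducedness of $u_1$. Finally, when both $\alpha, \beta$ lie in $w'$, their commutation conditions transfer to commutations within $w'$, exhibiting a reduced expression $w' = s w'' s$ with $|w''| = |w'| - 2$, which contradicts the cyclic reducedness, equivalently the minimality, of $w'$. The principal obstacle is organising this case-by-case analysis; the unifying insight, namely that both ``initial letter of $u_1$'' and ``$r_i$ commuting with $u_1$'' imply ``terminal letter of $x$'', collapses the sixteen block-configurations into a handful of essentially distinct arguments.
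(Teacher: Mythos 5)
Your overall strategy coincides with the paper's: reduce the claim to showing that each $z\in\mathrm{Prod}(x^{-1},xw')$ is cyclically reduced, put $z$ in the normal form $u_1 r_1\dotsm r_l u_1^{-1}w'$ via Theorem~\ref{thm:skipping-cancellation-normal-form}, and then analyse where the occurrences of a hypothetical letter $s$ with $|szs|<|z|$ can sit, using the reducedness of $xw'x^{-1}$ and the cyclic reducedness of $w'$. Your central structural observation --- that an initial letter of $u_1$, or an $r_i$ commuting with all of $u_1$, is a terminal letter of $x$ and therefore is neither an initial nor a terminal letter of $w'$ and does not commute with all of $w'$ --- is correct and is essentially the same engine that drives the paper's proof.

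Two sub-cases are, however, justified incorrectly as written. First, when $\alpha$ and $\beta$ both lie in $u_1$ you assert that ``the combined commutation conditions force $s$ to commute with every letter of $u_1$''; they do not, since neither condition controls the letters strictly between $\alpha$ and $\beta$. The sub-case is nonetheless covered by your own main argument: $\beta$ commutes with every letter to its right, hence with all of $w'$, while $\alpha$ makes $s$ a terminal letter of $x$, contradicting your structural constraint. Second, the case $\alpha\in u_1^{-1}$ is not handled by a literal symmetry with $\alpha\in u_1$, because an initial letter of $u_1^{-1}$ sits in the interior of $x^{-1}=u_1r_1\dotsm r_l v_1^{-1}$ and the implication ``initial letter of the block $\Rightarrow$ terminal letter of $x$'' is not available by the same token. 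The correct treatment is that this case is vacuous: such an $\alpha$ commutes with every letter preceding it, in particular with all of $u_1$, and since $S(u_1^{-1})=S(u_1)$ the letter $s$ also occurs in $u_1$; moving $\alpha$ to the front of the word then produces a cancellation against that occurrence, contradicting the reducedness of $z$. With these two local repairs your proof is complete and follows the same route as the paper's.
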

\begin{proof}
  Observe similarly to Remark~\ref{rem:product-set-hecke-algebra-cancellation} that every $u \in \mathrm{Prod}(x^{-1}, xw')$ has a reduced decomposition $u = u'w'$ for $u' \in \mathrm{Prod}(x^{-1}, x)$.  By Theorem~\ref{thm:skipping-cancellation-normal-form}, there are reduced decompositions $x^{-1} = y t_1 \dotsm t_k z^{-1}$ and $x = z t_1 \dotsm t_k a$ such that the letters $t_1, \dotsc, t_k$ commute pairwise, $u' = y t_1 \dotsm t_k a$ is a reduced decomposition and $ya = x^{-1}x = e$ as elements of $W$.  It follows that $a = y^{-1}$.

Suppose that $u$ was not of minimal length in its conjugacy class.  Then  Theorem~\ref{thm:cyclic-reduction-shifts-right-angled} due to Marquis says that $u$ is not cyclically reduced.  So it suffices to consider $s \in S$ which is an initial and a terminal letter of $u$, and prove that it commutes with all elements of $S(u)$.  Record for use in the proof that all subexpressions of consecutive letters of the reduced expression $x w' x^{-1} = z t_1 \dotsm t_k y^{-1} w' y t_1 \dotsm t_k z^{-1}$ are reduced.  In particular, the expressions $w'y$, $y^{-1} w' y$ and $w' y t_1 \dotsm t_k$ are reduced.  We now consider the first occurrence of $s$ in the reduced decomposition $u = u'w' = y t_1 \dotsm t_k y^{-1} w'$.

Assume first that $s$  is a letter of $y$.  Then it automatically is an initial letter of $y$.  If $s$ was a terminal letter of $w'$, we would obtain a contradiction to the fact that $w'y$ is reduced.  It follows that $s$ must commute with $w'$, which now contradicts the fact that the expression $y^{-1} w' y$ is reduced.  In conclusion, $s$ is not a letter of $y$.

Assume then that $s$ appears in $t_1, \dotsc, t_k$.  Then $s$ commutes with all letters $t_1, \dotsc, t_k$ and with $y$.  So it commutes with $y t_1 \dotsm t_k$.  Hence, it can only appear once in the reduced decomposition $u' = y t_1 \dotsm t_k y^{-1}$.  So $s$ must be a terminal letter of $w'$.  But this contradicts the fact that $w' y t_1 \dotsm t_k$ is reduced.  We infer that $s$ must be an initial and terminal letter of $w'$ and commute with $y t_1 \dotsm t_k y^{-1}$.  Since $w'$ is cyclically reduced, it follows that $s$ also commutes with $w'$. So it commutes with all of $u$.  This is what we had to show.
\end{proof}

\section{Smooth subalgebras revisited}
\label{sec:smooth-subalgebras-revisited}

We refer to Bost's work \cite[Appendix A]{bost1990-principe-oka} for a reference on smooth subalgebras.  Let us recall their definition before we outline the content of this secton.
\begin{definition}
  \label{def:smooth-subalgebra}
  Let $A$ be unital C*-algebra.  We say that a subset $\cA \subseteq A$ is a smooth subalgebra if
  \begin{itemize}
  \item $\cA$ is a unital Frech{\'e}t algebra whose group of invertible elements is open and such that inversion is a continuous map on $\mathrm{GL}(\cA)$,
  \item the inclusion map $\cA \hra A$ is continuous,
  \item $\cA$ is dense in $A$, and
  \item $\cA$ is inverse-closed, that is if $a \in \cA$ is invertible in $A$, then $a^{-1} \in \cA$.
  \end{itemize}
\end{definition}

We introduce an abstract framework capturing the classical argument of Connes \cite{connes1986} and Jolissaint \cite{jolissaint1989-k-theory} providing smooth subalgebras.
\begin{definition}
  \label{def:empowered-differential-seminorm}
  Let $(\cA, \| \cdot \|)$ be a unital, C*-normed *-algebra.  An empowered differential seminorm on $\cA$ is a map $\rD\colon \cA \lra \rmc(\NN_0, \RR_{\geq 0})$  such that
  \begin{itemize}
  \item[(i)] each component $\rD_n\colon \cA \lra \RR_{\geq 0}$ is a seminorm,
  \item[(ii)] there is $C > 0$ satisfying $\rD_0(a) \leq C \|a\|$ for all $a \in \cA$,
  \item[(iii)] for every $k \in \NN_0$ there is a polynomial $f_k \in \RR[(X_n)_{n \in \NN_0}, (Y_n)_{n \in \NN_0}, U, V]$ with zero constant coefficient such that for all $a,b \in \cA$ we have
    \begin{gather*}
      \rD_k(ab) \leq f_k((\rD_n(a))_{n \in \NN_0}, (\rD_n(b))_{n \in \NN_0}, \|a\|, \|b\|)
      \eqcomma \text{ and }
    \end{gather*}
  \item[(iv)] there is $C' > 0$ such that for every $k \in \NN_0$ there is a polynomial $g_k \in \RR[(X_n)_{n \in \NN_0}, Y]$ with zero constant coefficient and some function $h_k\colon \NN_0 \lra \RR_{\geq 0}$ vanishing at infinity such that for all $1 \leq M \leq N$ and for all $a \in \cA$ satisfying $\|a\| < C$ we have
    \begin{gather*}
      \rD_k(\sum_{l = M}^Na^l) \leq h_k(M) \cdot g_k((\rD_n(a))_{n \in \NN_0}, \|a\|) 
      \eqstop
    \end{gather*}
  \end{itemize}
\end{definition}

\begin{remark}
  \label{rem:empowered-differential-seminorm-frechet-algebra}
  If $\cA$ is a unital, C*-normed *-algebra with an empowered differential seminorm $\rD$, then the entries $\rD_n(a) = \rD(a)_n$ for $n \in \NN_0$, $a \in \cA$, define a family of seminorms such that the separation-completion of $\cA$ with respect to $(\rD_n)_n$ and $\| \cdot \|$ is a Frech{\'e}t algebra.  Indeed, the inequality
  \begin{gather*}
    \rD_k(ab) \leq f_k((\rD_n(a))_{n \in \NN_0}, (\rD_n(b))_{n \in \NN_0}, \|a\|, \|b\|)
  \end{gather*}
  for all $a,b \in \cA$ shows continuity of the multiplication on the resulting Frech{\'e}t space.  Often we will assume that there are $C > 0$ and $N \in \NN_0$ such that $\|a\| \leq C \rD_N(a)$ for all $a \in \cA$.  In this case, we can equivalently consider the completion solely with respect to the family $(\rD_n)_n$.
\end{remark}

In Section~\ref{sec:smooth-subalgebras-hecke} the following proposition will be used to find empowered differential seminorms, by proving parts of the required inequalities for a family of seminorms which is equivalent in a suitable sense to the desired one.
\begin{proposition}
  \label{prop:empowered-differential-seminorm-equivalence}
  Assume that $\cA$ is a unital, C*-normed *-algebra with two maps $\rD, \rD'\colon \cA \lra \rmc(\NN_0, \RR_{\geq 0})$ satisfying the following properties.
  \begin{itemize}
  \item[(i)] Each component $\rD_n$ is a seminorm on $\cA$,
  \item[(ii)] there is $C > 0$ satisfying $\rD(a)_0 \leq C \|a\|$ for all $a \in \cA$,
  \item[(iii)] for every $k \in \NN_0$ there is a polynomial $f_k \in \RR[(X_n)_{n \in \NN_0}, (Y_n)_{n \in \NN_0}, N, M]$ with zero constant coefficient such that for all $a,b \in \cA$ we have
    \begin{gather*}
      \rD_k(ab) \leq f_k((\rD_n(a))_{n \in \NN_0}, (\rD_n(b))_{n \in \NN_0}, \|a\|, \|b\|)
      \eqcomma
    \end{gather*}
  \item[(iv')] there is $C > 0$ such that for every $k \in \NN_0$ there is a polynomial $g_k \in \RR[(X_n)_{n \in \NN_0}, Y]$ with zero constant coefficient and some function $h_k\colon \NN_0 \lra \RR_{\geq 0}$ vanishing at infinity such that for all $1 \leq M \leq N$ and for all $a \in \cA$ satisfying $\|a\| < C$ we have
    \begin{gather*}
      \rD'_k(\sum_{l = M}^Na^l) \leq h_k(M) \cdot g_k((\rD'_n(a))_{n \in \NN_0}, \|a\|) 
      \eqcomma \text{ and }
    \end{gather*}
  \item[(v)] for every $k \in \NN_0$ there are polynomials with zero constant coefficient $p_k,q_k \in \RR[(X_n)_n, Y]$ such that
    \begin{gather*}
      \rD_k(a) \leq p_k((\rD'_n(a))_{n \in \NN_0}, \|a\|)
      \quad
      \text{and}
      \quad
      \rD'_k(a) \leq q_k((\rD_n(a))_{n \in \NN_0}, \|a\|)
      \qquad
      \text{for all } a \in \cA
      \eqstop
    \end{gather*}
  \end{itemize}
  Then  $\rD$ is an empowered differential seminorm.
\end{proposition}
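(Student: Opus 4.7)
The plan is to verify condition (iv) of Definition~\ref{def:empowered-differential-seminorm} for $\rD$, since hypotheses (i), (ii), (iii) of the proposition coincide with the corresponding items of the definition. The strategy is to transport the decay estimate (iv') for $\rD'$ to $\rD$ by sandwiching (iv') between two applications of the equivalence (v), while handling the appearance of $\|\sum_l a^l\|$ via a geometric series argument.

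First, without loss of generality I would shrink the constant $C$ in hypothesis (iv') to $C' := \min(C, \tfrac{1}{2})$, so that $\|a\| < C' < 1$ whenever the hypothesis applies. The geometric estimate $\|\sum_{l=M}^{N} a^l\| \leq \|a\|^M / (1 - \|a\|) \leq (C')^M/(1-C')$ then yields an upper bound $K(M) := (C')^M/(1-C')$ depending only on $M$ and vanishing at infinity.

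Fix $k \in \NN_0$. Applying (v) to $\sum_{l=M}^{N} a^l$ gives $\rD_k(\sum_l a^l) \leq p_k((\rD'_n(\sum_l a^l))_n, \|\sum_l a^l\|)$. Since $p_k$ depends on only finitely many variables $X_n$, say for $n$ in some finite set $F_k$, I would apply (iv') for each such $n$ to bound $\rD'_n(\sum_l a^l) \leq h_n(M)\, g_n((\rD'_m(a))_m, \|a\|)$, and then apply (v) in the reverse direction to each $\rD'_m(a) \leq q_m((\rD_j(a))_j, \|a\|)$.  Substituting, and using $\|\sum_l a^l\| \leq K(M)$, I obtain a bound of the form $\rD_k(\sum_l a^l) \leq p_k((h_n(M)\, G_n)_{n \in F_k}, K(M))$ where each $G_n$ is a polynomial in $(\rD_j(a))_j$ and $\|a\|$ with zero constant coefficient.

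To finish, set $H(M) := \max(\{h_n(M) : n \in F_k\} \cup \{K(M)\})$, which vanishes at infinity as a finite maximum of such functions. Since $p_k$ has zero constant coefficient, every monomial in its expansion carries at least one factor drawn from the $h_n(M)$ or from $K(M)$, hence at least one factor of $H(M)$. Choosing $M_0$ so that $H(M) \leq 1$ for $M \geq M_0$, each monomial of total degree $d \geq 1$ in those factors is bounded by $H(M)^d \leq H(M)$ times a monomial in the remaining arguments. Collecting terms produces a polynomial $g_k$ with zero constant coefficient so that $\rD_k(\sum_l a^l) \leq H(M)\, g_k((\rD_n(a))_n, \|a\|)$ for all $M \geq M_0$; rescaling on the finite initial segment $M < M_0$ (where the bound is in any case finite) yields the estimate for all $M \geq 1$.

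The main obstacle I anticipate is the clean extraction of a single vanishing factor $h_k(M)$ from the composed polynomial bound. This rests squarely on the zero-constant-coefficient assumption attached to every polynomial in (iii), (iv'), (v), combined with the geometric-series trick that controls $\|\sum_{l=M}^{N} a^l\|$ by a function depending only on $M$ --- hence the mild but necessary shrinking of $C$ below $1$ in the very first step.
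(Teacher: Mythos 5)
Your overall strategy---transporting the decay estimate (iv$'$) through the equivalence (v) in both directions and extracting a single vanishing factor of $M$ by exploiting the zero constant coefficient of $p_k$---is exactly the route the paper takes. However, one step fails as written: the treatment of the monomials of $p_k$ that are pure powers of the variable $Y$. Such monomials $cY^f$ with $f \geq 1$ are permitted, since (v) only forbids a constant term (e.g.\ $p_k = Y$ is allowed). Under your substitution $Y \mapsto K(M)$ the entire monomial is absorbed into the vanishing factor $H(M)$, and the ``monomial in the remaining arguments'' left behind is the bare constant $c$. Hence the $g_k$ you collect at the end has a nonzero constant coefficient, contradicting your claim and violating requirement (iv) of Definition~\ref{def:empowered-differential-seminorm}. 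This is not cosmetic: the absence of a constant term in $g_k$ is precisely what is used later (in the proof of Theorem~\ref{thm:smooth-subalgebra-differential-seminorm}) to conclude that $\rD_n(\sum_{l \geq 1} c^l) \to 0$ as $c \to 0$, i.e.\ continuity of inversion at $1$.

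The repair is short and you already wrote down the needed inequality: instead of discarding all of $\|\sum_{l=M}^N a^l\|$ into $K(M)$, keep one factor of $\|a\|$ by writing $\|\sum_{l=M}^N a^l\| \leq \frac{(C')^{M-1}}{1-C'}\,\|a\|$, so that a pure-$Y$ monomial $cY^f$ is bounded by a function of $M$ vanishing at infinity times $c\|a\|^f$, which has zero constant coefficient. This is exactly the paper's argument: it splits $p_k = p_{k,1} + p_{k,2}$ with $p_{k,1}$ in the ideal generated by the $X_n$ and $p_{k,2} \in \RR[Y]$ without constant term, bounds $p_{k,2}(\|\sum_{l=M}^N a^l\|) \leq \tilde C \|a\|^{M-1}\|a\|$, and treats $p_{k,1}$ essentially as you do. Two further minor points: you should record that $p_k$ may be replaced by the polynomial with the absolute values of its coefficients, so that substituting upper bounds into it is legitimate; and your final ``rescaling on the finite initial segment $M < M_0$'' suffers from the same constant-term defect until the above fix is made.
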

\begin{proof}
  We have to find $C > 0$ allowing us to provide a suitable estimate for $\rD_k(\sum_{l = M}^Na^l)$ for all $k \in \NN_0$, all $1 \leq M \leq N$ and all $a \in \cA$ satisfying $\|a\| < C$.  Let $C > 0$ be a constant such that there are $h_k$ and $g_k$ as in (iv') satisfying
  \begin{gather*}
    \rD'_k(\sum_{l = M}^Na^l) \leq h_k(M) \cdot g_k((\rD'_n(a))_{n \in \NN_0}, \|a\|) 
    \eqcomma 
  \end{gather*}
  for all $1 \leq M \leq N$ and all $a \in \cA$ satisfying $\|a\| < C$.  We may assume that $p_k$ as in the statement of the proposition has non-negative coefficients, so that the associated polynomial function is monotone.  Write $p_k = p_{k,1} + p_{k,2}$ with $p_{k,1} \in \sum_m X_m\RR[(X_n)_n, Y]$ and $p_{k,2} \in \RR[Y]$ with zero constant coefficient.  Then 
    \begin{align*}
      \rD_k(\sum_{l = M}^Na^l)
      & \leq
      p_k((\rD'_n(\sum_{l = M}^Na^l))_{n \in \NN_0}, \|\sum_{l = M}^Na^l\|) \\
      & =
      p_{k,1}((\rD'_n(\sum_{l = M}^Na^l))_{n \in \NN_0}, \|\sum_{l = M}^Na^l\|) + p_{k,2}(\|\sum_{l = M}^N a^l\|)
      \eqstop
    \end{align*}
   As $p_{k,2}$ has no constant coefficient, using $\|a\| \leq C < 1$, we obtain that
    \begin{gather*}
      p_{k,2}(\|\sum_{l = M}^N a^l\|)
      \leq
      \tilde C \|a\|^M
      =
      \tilde C \|a\|^{M-1} \|a\|
    \end{gather*}
    for some $\tilde C > 0$.

    The other summand can be bounded as follows:
    \begin{gather*}
      p_{k,1}((\rD'_n(\sum_{l = M}^Na^l))_{n \in \NN_0}, \|\sum_{l = M}^Na^l\|)
      \leq
      p_{k,1}((h_n(M) g_n((\rD'_m(a))_{m \in \NN_0}, \|a\|))_{n \in \NN_0}, \|\sum_{l = M}^Na^l\|)
      \eqstop
    \end{gather*}
    Since $\|\sum_{l = M}^Na^l\| \leq \sum_{l = M}^N \|a\|^l \leq \frac{1}{1 - C}$ and every monomial appearing in $p_{k,1}$ is divisible by some $X_n$, we find that
    \begin{gather*}
      p_{k,1}((\rD'_n(\sum_{l = M}^Na^l))_{n \in \NN_0}, \|\sum_{l = M}^Na^l\|)
      \leq
      \tilde h(M) \tilde g_k((\rD'_m(a))_{m \in \NN_0}, \|a\|)
    \end{gather*}
    for the function $\tilde h = \max_{n \leq \deg p_{k,1}} h_n$ vanishing at infinity and some polynomial $\tilde g_k \in \RR[(X_n)_{n \in \NN_0}, Y]$.  Using the estimate $\rD'_m(a) \leq q_m((\rD_i(a))_{i \in \NN_0}, \|a\|)$, we obtain an estimate on $\rD_k(\sum_{l = M}^Na^l)$ needed to show that $\rD$ is an empowered differential seminorm.
\end{proof}

\begin{theorem}
  \label{thm:smooth-subalgebra-differential-seminorm}
  Let $D\colon \cA \ra \rmc(\NN_0, \RR_{\geq 0})$ be an empowered differential seminorm on a unital, C*-normed *-algebra $\cA$. Assume that there is $C > 0$ and $N \in \NN_0$ such that $\|a\| \leq C \rD_N(a)$ for all $a \in \cA$.  Denote by $A$ the C*-algebra closure of $\cA$ and by $\cB$ its completion with respect to the family of seminorms $(D_n)_n$.  Then $\cA \subseteq A$ extends to a continuous inclusion $\cB \lra A$ whose image is a smooth subalgebra.
\end{theorem}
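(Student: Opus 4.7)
The strategy is to verify each clause of Definition~\ref{def:smooth-subalgebra} in turn, drawing on the axioms (i)--(iv) of an empowered differential seminorm. First, I would check that $\cB$ is a unital Fréchet algebra and that the inclusion extends to a well-defined continuous algebra homomorphism $\iota\colon\cB \to A$ with dense image. The family $(\rD_k)_k$ extends by continuity to seminorms on $\cB$. The polynomial bound in property (iii), together with the hypothesis $\|a\| \leq C\rD_N(a)$, shows that multiplication on $\cA$ is jointly continuous in the Fréchet topology, hence extends uniquely to a jointly continuous associative multiplication on $\cB$. The same norm estimate shows that the identity map $\cA \to A$ is continuous for the Fréchet topology, so extends to a continuous algebra homomorphism $\iota\colon\cB \to A$; since $\cA$ is contained in the image, $\iota(\cB)$ is dense in $A$. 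Injectivity of $\iota$ follows because the Fréchet topology on $\cA$ is finer than the norm topology and the resulting completion inherits enough separation.

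The heart of the proof is inverse-closedness of $\iota(\cB)$ in $A$, which I would reduce to the following claim: if $b \in \cB$ has $\|\iota(b)\| < \min(1, C')$, where $C'$ is the constant from (iv), then the inverse of $1 - \iota(b) \in A^\times$ lies in $\iota(\cB)$. The general case (invertible elements close to some given $b_0 \in \cB^\times$) reduces to this base case via the identity $b = b_0(1 - b_0^{-1}(b_0 - b))$, using continuity of left multiplication by $b_0^{-1}$ on $\cB$ granted by (iii). For the base case, the Neumann series $(1 - \iota(b))^{-1} = \sum_{l \geq 0} \iota(b)^l$ converges in $A$, and I would lift this convergence to $\cB$ by bounding the Fréchet tails of the partial sums $S_N = \sum_{l=0}^N b^l \in \cB$ via property (iv):
\begin{gather*}
  \rD_k(S_N - S_{M-1}) \;\leq\; h_k(M) \cdot g_k\bigl((\rD_n(b))_{n \in \NN_0},\,\|\iota(b)\|\bigr)
  \eqstop
\end{gather*}
Since $h_k$ vanishes at infinity and the $g_k$-factor on the right is a finite real number, the tails go to $0$ as $M \to \infty$. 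Hence $(S_N)$ is Cauchy in $\cB$ with some Fréchet limit $c$, and continuity of $\iota$ yields $\iota(c) = (1 - \iota(b))^{-1}$.

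Finally, openness of $\cB^\times$ and continuity of inversion both come from the same Neumann-series mechanism applied inside the Fréchet algebra $\cB$: for $b_0 \in \cB^\times$ and $b$ close to $b_0$ in the Fréchet topology, one expresses $b^{-1}$ as $(1 - (1 - b_0^{-1}b))^{-1} b_0^{-1}$, a Fréchet-convergent series whose convergence and continuous dependence on $b$ are again controlled by (iii) and (iv). The main obstacle I anticipate is the delicate bookkeeping involved in transferring the polynomial estimates (iii) and (iv) from the dense subalgebra $\cA$ to the completion $\cB$, in particular verifying that every polynomial in $(\rD_n(b))_n$ remains finite-valued at points of $\cB$ (which is automatic since for each $k$ the polynomial $g_k$ has finite degree and involves only finitely many seminorm variables) and that the Fréchet-convergent series defining inverses genuinely lie in $\cB$ rather than merely in $A$. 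A secondary technical point is ensuring the injectivity of $\iota$ with enough care so that $\cB$ can be genuinely identified with a subalgebra of $A$, not just continuously mapped into it.
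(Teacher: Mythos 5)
Your treatment of continuity, density, openness of the invertible group, and continuity of inversion matches the paper's, and your base case --- summing the Neumann series $\sum_l b^l$ in each seminorm $\rD_k$ via axiom (iv) when $\|b\|$ is below the threshold constant --- is exactly the mechanism the paper uses. The gap is in inverse-closedness. You reduce ``the general case'' to the base case via $b = b_0\bigl(1 - b_0^{-1}(b_0 - b)\bigr)$ for some $b_0 \in \mathrm{GL}(\cB)$ close to $b$. But that reduction only covers elements that are already small perturbations of elements known to be invertible \emph{in $\cB$}; it proves that $\mathrm{GL}(\cB)$ is open and that $\mathrm{GL}(\cB)$ is relatively open in $\cB \cap \mathrm{GL}(A)$, not that the two sets coincide. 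For inverse-closedness you must start from an arbitrary $b \in \cB$ that is invertible in $A$, and there is no a priori $b_0 \in \mathrm{GL}(\cB)$ with $\|1 - b_0^{-1}b\|$ below the Neumann threshold: producing such a $b_0$ from a C*-norm approximation of $b$ by elements of $\cA$ would require knowing those approximants are invertible in $\cB$, which is exactly what is being proved. So the argument is circular as stated.

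The paper closes this gap with a spectral argument that your proposal is missing. First it shows $\rmr_A(a) = \rmr_{\cB}(a)$: if $\rmr_A(a) < 1$, the Beurling formula gives $\|a^k\| \to 0$, so some power $a^k$ lands in the small-norm regime where axiom (iv) applies, whence $1 - z^{-k}a^k$ is invertible in $\cB$ for $|z| = 1$ and therefore $z \notin \sigma_{\cB}(a)$. Passing to a power is the key move that gets an \emph{arbitrary} element (with small spectral radius, after rescaling) into the regime your base case can handle. Second, equality of spectral radii is upgraded to $\sigma_A(a) = \sigma_{\cB}(a)$ by a separate argument using a polynomial approximation of a reflection map together with the spectral mapping theorem in $A$; inverse-closedness then follows by looking at the point $0$. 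Without some substitute for these two steps (for instance, the standard C*-trick of passing to $b^*b$ and a power of $1 - b^*b/M$), your proof of the inverse-closedness clause of Definition~\ref{def:smooth-subalgebra} does not go through.
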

\begin{proof}
  Let $C > 1$ and $N \in \NN_0$ be chosen to satisfy $\|a\| \leq C \rD_N(a)$ and $\rD_0(a) \leq C \|a\|$ for all $a \in \cA$.  The first bound implies that every Cauchy sequence for the uniformity defined by $\rD$ is a Cauchy sequence for the C*-norm on $\cA$.  Hence, we obtain a continuous inclusion $\cB \lra A$.  We may thus assume that $\cA$ is complete in the uniformity defined by $\rD$.  We have to show that under this identification $\cA$ is a smooth subalgebra of $A$.  It is clear that $\cA$ is a unital, dense Fr{\'e}chet subalgebra of $A$.  So it remains to show that $\mathrm{GL}(\cA) \subseteq \cA$ is open, that inversion is continuous and that $\cA$ is inverse-closed in $A$.

 Let us first show that the subset of invertible elements $\mathrm{GL}(\cA) \subseteq \cA$ is open.  We may increase $C$ so that $C^{-1}$ and polynomials $g_k \in \RR[(X_k)_{k \in \NN_0}, Y]$ witness the last part of Definition~\ref{def:empowered-differential-seminorm} of an empowered differential seminorm.  Let $a \in \cA$ be invertible and assume that $b \in \cA$ satisfies $\|a - b\| < (C\|a^{-1}\|)^{-1}$.  Let us show that $b \in \mathrm{GL}(\cA)$.  We put $c = 1 - ba^{-1}$.  Then we have $\|c\| = \|1 - ba^{-1}\| \leq \|a - b\| \|a^{-1}\| < C^{-1}$.  By the choice of $C$, it follows that $\sum_{l = 1}^\infty c^l$ is summable in every seminorm $\rD_n$.  So $(1 -c)^{-1} = \sum_{l = 0}^\infty c^l$ exists in $\cA$.  As $1 - c = ba^{-1}$, it follows that $b$ is right-invertible in $\cA$.  Arguing the same way for $1 - a^{-1}b$, we infer that $b$ is also left-invertible in $\cA$.  It hence follows that $b \in \mathrm{GL}(\cA)$.  We now conclude that
  \begin{gather*}
    \{b \in \cA \mid \rD_n(b - a) \leq (C^2\|a^{-1}\|)^{-1} \}
    \subseteq
    \{b \in \cA \mid \|b - a\| \leq (C\|a^{-1}\|)^{-1} \}
    \subseteq
    \mathrm{GL}(\cA)
  \end{gather*}
  is an open neighbourhood of $a$.  So $\mathrm{GL}(\cA) \subseteq \cA$ is open.

  Let us now show that inversion is continuous on $\mathrm{GL}(\cA)$.  For a convergent sequence $a_k\to a$ in $\mathrm{GL}(\cA)$, we have $a_k^{-1} = a^{-1} (a_ka^{-1})^{-1}$.  As $a_ka^{-1} \to 1$, it suffices to prove continuity of inversion at $1$.  Let $b \in \cA$ satisfy $\|1 - b\| < C^{-1}$.  For $c = 1 - b$, we have seen that $b^{-1} = \sum_{l = 0}^\infty c^l \in \cA$.  So $b^{-1} - 1 = \sum_{l = 1}^\infty c^l$.  Furthermore, since $\rD$ is an empowered differential seminorm, we find that $\rD_n(\sum_{l = 1}^\infty c^l) \leq g_n((\rD_k(c))_k)$ for some polynomials $g_n$ without constant coefficient.  In particular, we infer that $b^{-1} \to 1$ as $b \to 1$.

  It remains to show that $\cA \subseteq A$ is inverse-closed.  We proceed along the lines of standard arguments for Banach algebras.  We first show that for every $a \in \cA$ the spectral radius satisfies $\rmr_A(a) = \rmr_{\cA}(a)$.  Since every element that is invertible in $\cA$ is also invertible in $A$, it follows that $\sigma_A(a) \subseteq \sigma_\cA(a)$ and hence $\rmr_A(a) \leq \rmr_{\cA}(a)$.  To prove the converse implication, we may rescale $a$, so that it suffices to prove the following property: whenever $\rmr_A(a) < 1$, then $\sigma_{\cA}(a) \cap \rS^1 = \emptyset$ holds.  Fix $z \in \rS^1$.  The Beurling formula for the spectral radius in $A$ shows that $\|a^k\| \to 0$ as $k \to \infty$.  So there are $k \in \NN$ such that $\|a^k\| < C^{-1}$.  Put $b = \frac{1}{z^k}a^k$.  Then $1 - b$ is invertible, as $\|1 - (1 - b)\| = \|b\| < C^{-1}$ and $D$ is an empowered differential seminorm.  So $1 \notin \sigma_{\cA}(b)$, which implies that $z^k \notin \sigma_{\cA}(a^k)$.  It then follows that $z \notin \sigma_{\cA}(a)$ holds. 
So we have shown that $\rmr_A(a) = \rmr_\cA(a)$ for all $a \in \cA$.

Next, we will show that $\sigma_A(a) = \sigma_\cA(a)$ for all $a \in \cA$.  Assume for a contradiction that $\sigma_A(a) \neq \sigma_{\cA}(a)$, that is $\sigma_A(a) \subsetneq \sigma_{\cA}(a)$.  Considering a small ball in $\CC$ that intersects $\sigma_{\cA}(a)$ but not $\sigma_A(a)$, we can find a function $f\colon \CC \lra \CC$ which is a composition of a reflection along the boundary of this ball with a rigid motion, such that $f(\sigma_A(a)) \subseteq \rB_1(0)$ but there is some $\lambda \in f(\sigma_\cA(a))$ satisfying $\Re \lambda < -1$.  We may replace $f$ by some sufficiently good polynomial approximation satisfying the same properties.  Then we have $f(\sigma_{\cA}(a)) \subseteq \sigma_\cA(f(a))$ 
and by the spectral mapping theorem for C*-algebras we have $f(\sigma_A(a)) = \sigma_A(f(a))$.  We infer that $\rmr_A(f(a)) < 1 < \rmr_{\cA}(f(a))$, contradicting what we have already shown.  It follows that the spectrum of $a \in \cA$ agrees with its spectrum in $A$.  In particular $\cA \subseteq A$ is inverse-closed.
\end{proof}

In \cite{blackadarcuntz1991,puschnigg2010} the concept of quasi-derivations for norms was employed.  A map $\Delta\colon \cA \lra \cA \otimes \cA$ on a normed algebra $\cA$ with fixed cross-norm on $\cA \otimes \cA$ was called a quasi-derivation if there is $C > 0$ such that for all $a,b \in \cA$ the inequality
\begin{gather*}
  \|\Delta(ab)\| \leq C (\|a\| \|\Delta(b)\| + \|\Delta(a)\|\|b\|)
\end{gather*}
holds.  We adapt this notion to the setup of empowered differential seminorms.
\begin{definition}
  \label{def:quasi-derivation}
  Let $\rD$ be a empowered differential seminorm on a unital, C*-normed *-algebra $\cA$.  A quasi-derivation with constant $C > 0$ on $(\cA, \rD)$ is a continuous, linear map $\delta\colon \cA \lra \cM$ into a Fr{\'e}chet space with a generating family of seminorms $(\rD^\cM_n)_n$ such that there is $C > 0$ satisfying
  \begin{gather*}
    \rD_n^\cM(\delta(ab)) \leq C(\rD_0(a)\rD_n^\cM(\delta(b)) + \rD_n^\cM(\delta(a))\rD_0(b))
    \qquad
    \text{ for all } a,b \in \cA, n \in \NN_0
    \eqstop
  \end{gather*}
\end{definition}

We now have an analogue of \cite[Proposition 5.4]{blackadarcuntz1991} and \cite[Theorem 3.8 (c)]{puschnigg2010}.
\begin{proposition}
  \label{prop:quasi-derivation-gives-differential-seminorm}
  Let $\cA$ be a unital, C*-normed *-algebra with empowered differential norm $\rD$.  Let $C \geq 1$ and assume that $\Delta\colon \cA \lra \cM$ is a quasi-derivation with constant $C$ for every component of $\rD$.  Then the formulae
  \begin{gather*}
    \rD'_0(a) = C \rD_0(a)
    \eqcomma \qquad
    \rD_n'(a) = \rD_n(a) + \rD^\cM_{n-1}(\Delta(a))
    \quad \text{for all } a \in \cA, n \geq 1
  \end{gather*}
  define an empowered differential norm on $\cA$.  Writing $\cB$ and $\cB'$ for the completions of $\cA$ with respect to $\rD$ and $\rD'$, the identity on $\cA$ extends to a continuous inclusion $\cB' \hra \cB$.
\end{proposition}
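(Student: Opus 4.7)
The strategy is to invoke Proposition~\ref{prop:empowered-differential-seminorm-equivalence} with the roles of the two seminorms swapped: place the target $\rD'$ in the role of ``$\rD$'' from that proposition and the given $\rD$ in the role of ``$\rD'$''. This reduces the task to verifying conditions (i)--(iii) for $\rD'$ together with the equivalence condition (v), since condition (iv$'$) for $\rD$ is granted for free by the hypothesis that $\rD$ is an empowered differential seminorm.

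Conditions (i) and (ii) for $\rD'$ will be immediate from the construction, using linearity of $\Delta$ and the identity $\rD'_0 = C\rD_0$. For condition (iii), I would split
$$\rD'_k(ab) = \rD_k(ab) + \rD^\cM_{k-1}(\Delta(ab)).$$
The first summand is bounded by the polynomial coming from $\rD$ being empowered differential, and the second summand is bounded via the quasi-derivation inequality by $C(\rD_0(a)\rD^\cM_{k-1}(\Delta(b)) + \rD^\cM_{k-1}(\Delta(a))\rD_0(b))$, which in turn is dominated by $C(\rD'_0(a)\rD'_k(b) + \rD'_k(a)\rD'_0(b))$ after invoking $\rD_0 \le \rD'_0$ (valid because $C \ge 1$) and $\rD^\cM_{k-1}(\Delta(\cdot)) \le \rD'_k(\cdot)$. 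Replacing the $\rD_n$ arguments on the right by $\rD'_n$ (which is legitimate since we may assume nonnegative coefficients and $\rD_n \le \rD'_n$ throughout) produces a polynomial bound in the $\rD'_n$ variables with zero constant coefficient, as required.

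For the equivalence (v), the direction $\rD_k(a) \le \rD'_k(a)$ is built into the definition of $\rD'$. The other direction amounts to estimating $\rD^\cM_{k-1}(\Delta(a))$ by a polynomial in the $\rD_n(a)$; this is precisely what is furnished by the assumed continuity of the quasi-derivation $\Delta \colon (\cA, (\rD_n)_n) \lra (\cM, (\rD^\cM_n)_n)$, which, for each $n$, yields a constant $C_n > 0$ and finitely many indices $m_1, \dotsc, m_r$ with $\rD^\cM_n(\Delta(a)) \le C_n \sum_j \rD_{m_j}(a)$, evidently a polynomial with zero constant coefficient. Proposition~\ref{prop:empowered-differential-seminorm-equivalence} then delivers the conclusion that $\rD'$ is an empowered differential seminorm.

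The continuous inclusion $\cB' \hra \cB$ will follow directly from the pointwise inequalities $\rD_n \le \rD'_n$ (valid for all $n \ge 0$, using $C \ge 1$ for $n=0$): a sequence Cauchy with respect to the $\rD'$-family is automatically Cauchy with respect to the $\rD$-family, so the identity on $\cA$ extends to a continuous linear map $\cB' \lra \cB$, and injectivity is inherited from the common continuous map of both completions into the ambient $\Cstar$-algebra closure afforded by the bound $\rD_0 \le C\|\cdot\|$. I expect the only delicate step to be the polynomial bookkeeping in (iii), where one must confirm that the sum of the two bounds can be rewritten as a single polynomial in the $\rD'_n$ variables with zero constant coefficient; this however is purely a matter of monotonicity together with $C \ge 1$, and no hard estimate is hidden there.
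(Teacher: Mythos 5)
Your reduction to Proposition~\ref{prop:empowered-differential-seminorm-equivalence} (with the roles of the two families swapped) stands or falls with condition (v), and that is where the gap lies. You need the bound $\rD'_k(a) \leq q_k((\rD_n(a))_{n}, \|a\|)$, i.e.\ a polynomial estimate of $\rD^\cM_{k-1}(\Delta(a))$ in terms of the seminorms $\rD_n(a)$ and $\|a\|$, and you extract it from the word ``continuous'' in Definition~\ref{def:quasi-derivation}, read as continuity of $\Delta\colon (\cA,(\rD_n)_n) \lra (\cM,(\rD^\cM_n)_n)$. That reading cannot be the intended one: if such a bound held, the families $\rD$ and $\rD'$ would generate the same topology, $\cB' = \cB$, the proposition would be vacuous, and the whole point of Corollary~\ref{cor:hecke-schwartz-algebra-continuous-quasi-derivation} --- namely that $\Delta$ only extends continuously after passing to the \emph{smaller} completion $\cS'(W,S,q)$, on which continuity is forced tautologically by adding $\rD^\cM_{n-1}\circ\Delta$ to the seminorms --- would evaporate. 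The hypothesis actually used is only closability of $\Delta$ with respect to each component of $\rD$ (this is how the paper argues, following Puschnigg's Theorem~3.8), and closability does not yield any quantitative domination of $\rD^\cM_{k-1}\circ\Delta$ by the $\rD_n$. Without condition (v) in that direction, Proposition~\ref{prop:empowered-differential-seminorm-equivalence} does not apply, and the substantive part of the statement --- property (iv) of Definition~\ref{def:empowered-differential-seminorm} for $\rD'$, i.e.\ the estimate on $\rD'_k(\sum_{l=M}^N a^l)$ --- is left unproved. The paper establishes it directly by iterating the quasi-Leibniz inequality to get $\rD^\cM_{k-1}(\Delta(a^l)) \leq l\,C^{l}\rD_0(a)^{l-1}\rD^\cM_{k-1}(\Delta(a))$ and summing the resulting geometric series using $\|a\| \leq \tfrac{1}{2}C^{-1}$; this computation is the content your proposal skips. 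Your verification of properties (i)--(iii) for $\rD'$, by contrast, is correct and coincides with the paper's.

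A second, smaller problem: your justification of injectivity of $\cB' \hra \cB$ via ``the common continuous map of both completions into the ambient C*-closure afforded by $\rD_0 \leq C\|\cdot\|$'' does not work, because that inequality produces a continuous map \emph{from} the C*-completion to the $\rD_0$-completion, not maps from $\cB$ or $\cB'$ into the C*-algebra (the reverse bound $\|\cdot\| \leq C\rD_N(\cdot)$ is assumed only in Theorem~\ref{thm:smooth-subalgebra-differential-seminorm}, not here). Injectivity is precisely the point at which closability of $\Delta$ enters: a sequence $(a_n)$ that is $\rD'$-Cauchy and $\rD$-converges to $0$ has $(\Delta(a_n))$ Cauchy in $\cM$, and closability forces the limit to be $0$, whence $\rD'_k(a_n) \to 0$.
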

\begin{proof}
  We shall first verify that the objects defined in the proposition indeed satisfy all properties of an empowered differential seminorm.  Given $C_0 > 0$ which satisfies $\rD_0(a) \leq C_0 \|a\|$ for all $a \in \cA$, we have $\rD'_0(a) \leq CC_0\|a\|$.  It is clear that $\rD'$ is positive homogeneous and subadditive.  For $a,b \in \cA$ we have 
  \begin{gather*}
    \rD'_0(ab) = C \rD_0(ab) \leq C \rD_0(a)\rD_0(b) \leq \rD'_0(a)\rD'_0(b)
    \eqstop
  \end{gather*}
  For $k \geq 1$ let $f_k$ be as in Definition~\ref{def:empowered-differential-seminorm} of an empowered differential seminorm.  We obtain
  \begin{align*}
    \rD'_k(ab)
    & =
      \rD_k(ab) + \rD^\cM_{k-1}(\Delta(ab)) \\
    & \leq
      f_k((\rD_n(a)), (\rD_n(b)), \|a\|, \|b\|) + C \left ( \rD_0(a)\rD^\cM_{k-1}(\Delta(b)) + \rD^\cM_{k-1}(\Delta(a))\rD_0(b) \right ) \\
    & \leq
      f_k((\rD'_n(a)), (\rD'_n(b)), \|a\|, \|b\|) + C \left ( \rD'_0(a)\rD'_k(b) + \rD'_k(a)\rD'_0(b) \right )
      \eqstop
  \end{align*}
  The last expression is bounded by a polynomial with zero constant coefficient in $(\rD'_n(a))_n$, $(\rD'_n(b))_n$, $\|a\|$ and $\|b\|$.

  Now let $C' > 0$ and $g_k \in \RR[(X_n)_n , Y]$ be as in the last item of Definition~\ref{def:empowered-differential-seminorm}.  We may assume that $C' < 1$ and $C' \leq \frac{1}{2}C^{-1}$.  If $a \in \cA$ satisfies $\|a\| \leq C'$, then
  \begin{align*}
    \rD'_k(\sum_{l = M}^N a^l)
    & =
      \rD_k(\sum_{l = M}^N a^l) + \rD^\cM_{k-1}(\Delta(\sum_{l = M}^N a^l)) \\
    & \leq
      h(M) g_k(\rD_n(a), \|a\|) + C \sum_{l = M}^N C^{l-1} \|a\|^{l-1} \rD^\cM_{k-1}(\Delta(a)) \\
    & \leq
      h(M) g_k(\rD_n(a), \|a\|) + C (C \|a\|)^{M} \sum_{l = 0}^\infty (C \|a\|)^{l-1}  \rD^\cM_{k-1}(\Delta(a))
      \eqstop
  \end{align*}
  Since $C \|a\| \leq \frac{1}{2}$, the last term is a product of a function in $M$ that vanishes at infinity and a polynomial with zero constant coefficient in $(\rD_n(a))_n$, $\|a\|$ and $\rD^\cM_{k-1}(\Delta(a))$.  Since $\rD_n(a) \leq \rD'_n(a)$ and $\rD^\cM_{k-1}(\Delta(a)) \leq \rD'_{k-1}(a)$ for all $a\in \cA$ and for all $n\in \NN_0$, the term is bounded by a polynomial with zero constant coefficient in $(\rD'_n(a))_n$ and $\|a\|$.
   
  We have verified all defining properties of an empowered differential seminorm.  The fact that the identity on $\cA$ extends to a continuous map $\cB' \lra \cB$ follows from the inequality $\rD_n(a) \leq \rD'_n(a)$, which holds for all $n \in \NN_0$ and for all $a \in \cA$.  Indeed, it implies that every Cauchy sequence for the uniformity defined by $\rD'$ is a Cauchy sequence for the uniformity defined by $\rD$.  The map $\cB' \lra \cB$ obtained so is injective, since $\Delta$ is closable with respect to any component of $\rD$, as follow precisely by the same arguments as \cite[Theorem 3.8 (a)]{puschnigg2010}.  Let us give some details.  Write $\vphi\colon \cB' \lra \cB$ for the continuous map which satisfies $\vphi|_\cA = \id_\cA$.  Let $b \in \ker \vphi$.  We have to show $b = 0$.  Let $(a_n)_n$ be a sequence in $\cA$ converging to $b$ in $\rD'$.  Then $(a_n)_n$ is Cauchy in $(\cA, \rD)$ and its limit is $0 \in \cB$, as $0 = \vphi(a) = \lim_\rD \vphi(a_n) = \lim_\rD a_n$.  Further, as $(a_n)_n$ converges in $\rD'$, the sequence $(\Delta(a_n))_n$ is Cauchy in $\rD^\cM$. Let $m \in \cM$ be its limit.  Then $(a_n, \Delta(a_n)) \lra (0,m)$ in the graph uniformity of $\Delta$.  As $\Delta$ is closable, we infer that $m = 0$.  This shows that $\rD'_k(b) = \lim_n \rD'_k(a_n) = \lim_n \rD_k(a_n) + \rD^\cM_k(\Delta(a_n)) = 0$.  So $b = 0$ follow from the fact that $\rD'$ is a differential norm.
\end{proof}

\section{Smooth subalgebras of Hecke algebras}
\label{sec:smooth-subalgebras-hecke}

In this section we combine results on the combinatorics of multiplication in right-angled Hecke algebras from Section~\ref{sec:combinatorics-hecke-algebra} with the notion of empowered differential seminorms developed in Section~\ref{sec:smooth-subalgebras-revisited}.  This way, we will find specific smooth subalgebras of reduced Hecke C*-algebras.

The following definition applies Bost-Lafforgue's notion of unconditional seminorms for group algebras (see \cite[Definition 2.1]{puschnigg2010}) to the setup of Hecke algebras. 
\begin{definition}
  \label{def:unconditional-seminorm-hecke}
  Let $(W, S)$ be a Coxeter system and $q \in \RR_{> 0}^{(W,S)}$ a deformation parameter.  Then an empowered differential seminorm on $\CC(W, S, q)$ is called unconditional if its components are unconditional with respect to the basis $(T_w)_{w \in W}$.  A Fr{\'e}chet algebra completion of $\CC(W, S, q)$ is called unconditional, if its topology is induced by a family of seminorms on $\CC(W, S, q)$ which are unconditional with respect to $(T_w)_{w \in W}$.
\end{definition}

Let us define the natural analogue of the Sobolev norms on the group algebra of a marked group (i.e. a group with a fixed generating set) in the setting of Hecke algebras.
\begin{definition}
  \label{def:sobolev-norms}
  Let $(W, S)$ be a Coxeter system and $q \in \RR_{>0}^{(W, S)}$ a deformation parameter.  For $s \in \RR_{\geq 0}$ we define as usual the associated (2,$s$)-Sobolev norm $\|\cdot\|_{2,s}$: for $a = \sum_w a_w T_w \in \CC(W, S, q)$ set
  \begin{gather*}
    \|a\|_{2,s} = \bigl ( \sum_w |a_w|^2 (1 + |w|)^{2s} \bigr )^{1/2}
    \eqstop
  \end{gather*}
  The completion of $\CC(W, S, q)$ with respect to the family of seminorms $(\| \cdot \|_{2,s})_{s \geq 0}$ is denoted by $\ltwo_{\mathrm{RD}}(W, S, q)$.
\end{definition}

\begin{remark}
  \label{rem:sobolev-norm-monotone}
  The Sobolev norms are monotone in the sense the $\|a\|_{2,s} \leq \|a\|_{2,t}$ holds for all $s \leq t$ and all $a \in \CC(W, S, q)$.  As a consequence, $\ltwo_{\mathrm{RD}}(W, S, q)$ is also the completion of $\CC(W, S, q)$ with respect to the family $(\| \cdot \|_{2,s})_{s \in \NN_0}$.
\end{remark}

\begin{remark}
  \label{rem:rd-algebra-cstar-algebra}
  As the (2,0)-Sobolev norm is just the 2-norm on $\CC(W, S, q)$, it follows that there is an injective map $\ltwo_{\mathrm{RD}}(W, S, q) \hra \ltwo(W)$.  We identify $\ltwo_{\mathrm{RD}}(W, S, q)$ with its image under this map.  Likewise, we consider $\Cstarred(W, S, q)$ a subset of $\ltwo(W)$ by the map $a \mapsto a \delta_e$.  This allows us to compare $\ltwo_{\mathrm{RD}}(W, S, q)$ and $\Cstarred(W, S, q)$ which is frequently done in the setup of groups.  See e.g.\ \cite{jolissaint1989-k-theory}.
\end{remark}

Let us record a consequence of the work of Caspers-Klisse-Larsen \cite{caspersklisselarsen2021} on the rapid decay property for right-angled Hecke algebras (called a Haagerup inequality there).  It is the analogue of classical results of Jolissaint's in the setting of reduced group C*-algebras \cite{jolissaint1986} and  the proof follows the arguments in this reference, using \cite{caspersklisselarsen2021} in lieu of \cite{haagerup1979}.
\begin{proposition}
  \label{prop:inclusion-rd-algebra}
  Let $(W, S)$ be a right-angled Coxeter system and $q \in \RR_{>0}^S$ a deformation parameter.  Then the following statements hold true.
  \begin{enumerate}
  \item There is $C > 0$ such that $\|a\| \leq C \|a\|_{2,2}$ for all $a \in \CC(W, S, q)$.
  \item There is $t \geq 0$ such that for all $s \geq 0$ and all $a, b \in \CC(W, S, q)$ we have
    \begin{gather*}
      \|ab\|_{2,s} \leq \|a\|_{2,s} \|b\|_{2, s + t}
      \eqstop
    \end{gather*}
    In particular, there is a unique *-algebra structure on $\ltwo_{\mathrm{RD}}(W, S, q)$ which continuously extends the *-algebra structure on $\CC(W, S, q)$.
  \item We have $\ltwo_{\mathrm{RD}}(W, S, q) \subseteq \Cstarred(W, S, q)$.
  \end{enumerate}
\end{proposition}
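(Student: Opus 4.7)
The plan is to adapt the classical arguments of Jolissaint \cite{jolissaint1986} for rapid-decay group algebras to the Hecke algebra setting, substituting the Haagerup inequality for hyperbolic groups with its analogue for right-angled Hecke algebras from \cite{caspersklisselarsen2021}.  The common tool throughout is the length-filtration decomposition $a = \sum_{n \geq 0} a_n$ of any $a \in \CC(W, S, q)$, where $a_n$ is the projection of $a$ onto $\cspan\{T_w \mid |w| = n\}$, together with the rapid decay estimate in the form $\|a_n\| \leq C(1+n)^r \|a_n\|_2$ for some fixed polynomial degree $r$.  For part (i), the triangle inequality and this bound yield $\|a\| \leq C \sum_n (1+n)^r \|a_n\|_2$; inserting a factor $(1+n)^{-1-\delta}(1+n)^{1+\delta}$ and applying Cauchy-Schwarz gives $\|a\| \leq C' \|a\|_{2,s}$ for every $s > r + \tfrac{1}{2}$.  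The specific value $s = 2$ in the statement is then justified by the explicit polynomial degree in the Haagerup inequality of \cite{caspersklisselarsen2021}.

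For part (ii), I would decompose $a = \sum_n a_n$ and $b = \sum_m b_m$ similarly, and use Corollary~\ref{cor:hecke-multiplication-combinatorics} to observe that the product $a_n b_m$ is supported on elements of length at most $n + m$.  Writing $(ab)_k$ for the length-$k$ component of $ab$, this gives
\begin{gather*}
  \|(ab)_k\|_2 \leq \sum_{n+m \geq k} \|a_n b_m\|_2 \leq C \sum_{n+m \geq k} (1+m)^r \|a_n\|_2 \|b_m\|_2
\end{gather*}
upon applying the rapid decay bound to $b_m$.  Multiplying by $(1+k)^{2s}$, summing over $k$, and carefully applying Cauchy-Schwarz, using $(1+k) \leq (1+n)(1+m)$ for $k \leq n+m$ and reserving a weight $(1+m)^{-1-\delta}$ to secure summability in $m$, produces the asymmetric inequality $\|ab\|_{2,s} \leq C'' \|a\|_{2,s} \|b\|_{2, s+t}$ for $t = r + \tfrac{1}{2} + \delta$.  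The ``in particular'' assertion then follows by continuity: multiplication on $\CC(W, S, q)$ extends to a continuous bilinear map on the completion $\ltwo_{\mathrm{RD}}(W, S, q)$, and the involution $T_w \mapsto T_{w^{-1}}$ preserves each Sobolev seminorm since the length function is inverse-invariant.

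For part (iii), given $a \in \ltwo_{\mathrm{RD}}(W, S, q)$, I would choose a sequence $(a_k)_k \subseteq \CC(W, S, q)$ converging to $a$ in every Sobolev seminorm; by part (i) this sequence is Cauchy in the operator norm, hence converges to some $b \in \Cstarred(W, S, q)$, and the two limits must coincide as elements of $\ltwo(W)$ by Remark~\ref{rem:rd-algebra-cstar-algebra}, yielding $a = b$.  The principal obstacle lies in the bookkeeping for part (ii): unlike in group algebras, the product $T_v T_w$ is supported on the whole set $\mathrm{Prod}(v, w)$ rather than a single element, so its geometry must be controlled via Corollary~\ref{cor:hecke-multiplication-combinatorics}, and propagating the polynomial degree $r$ through a double Cauchy-Schwarz argument requires care to land on the asymmetric rather than symmetric form of the submultiplicativity bound.
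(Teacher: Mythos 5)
Your proposal is correct and follows essentially the same route as the paper: the paper likewise derives part (i) from the sphere-wise Haagerup inequality of Caspers--Klisse--Larsen (which gives polynomial degree $r=1$, hence $s=2$) via Haagerup's Lemma~1.5 summation trick, and then defers parts (ii) and (iii) verbatim to Jolissaint's arguments, which are exactly the length-decomposition and Cauchy--Schwarz bookkeeping you spell out. The only substantive detail you leave implicit is that bounding $\|a_n b_m\|_2$ by $(1+m)^r\|a_n\|_2\|b_m\|_2$ requires the rapid-decay estimate for \emph{right} multiplication by $b_m$, which follows from the left-hand version via the involution $T_w \mapsto T_{w^{-1}}$ and traciality of the canonical trace.
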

\begin{proof}
  We prove the first item by translating work of Caspers-Klisse-Larsen.  Indeed, Caspers-Klisse-Larsen's \cite[Theorem 3.4]{caspersklisselarsen2021} says that every  $a \in \CC(W, S, q)$ supported on the sphere of radius $d$ satisfies
  \begin{gather*}
    \|a\| \leq d |\mathrm{Cliq}(\Gamma_W)|^3 (\prod_{s \in S} p_s) \|a\|_2
    \eqcomma
  \end{gather*}
  where $\Gamma_W$ is the commutation graph of $(W,S)$, and $p_s$ was defined in Subsection \ref{sec:hecke-algebras}.  Arguing exactly as in Haagerup's \cite[Lemma 1.5]{haagerup1979}, we thus obtain
  \begin{gather*}
    \|a\| \leq 2 |\mathrm{Cliq}(\Gamma_W)|^3 (\prod_{s \in S} p_s) \|a\|_{2,2}
    \eqcomma
  \end{gather*}
  for all $a \in \CC(W, S, q)$.  This proves the first item with $C = 2 |\mathrm{Cliq}(\Gamma_W)|^3 (\prod_{s \in S} p_s)$.  The remainder of the proof makes use of this estimate combined with the linear structure of $\ltwo_{\mathrm{RD}}(W, S, q)$ and as such follows verbatim as in \cite[in particular Lemma 1.2.4 and Proposition 1.2.6]{jolissaint1986}.
\end{proof}

The Sobolev norms are clearly unconditional and provide a candidate for the structure of an empowered differential seminorm on right-angled Hecke algebras.  Verifying the remainder of the definition directly in terms of Sobolev norms turns out to be difficult, whence we introduce the following notion, based on Connes' \cite{connes1986} and Jolissaint's \cite{jolissaint1989-k-theory} work.
\begin{definition}
  \label{def:propagation-seminorm}
  Let $(W, S)$ be a right-angled Coxeter systems and $q \in \RR_{>0}^S$ a deformation parameter.  For real numbers $n \geq 0$, we denote by $P_n \in \bo(\ltwo(W))$ the orthogonal projection onto the closed subspace $\cspan\{ \delta_w \mid |w| \leq n\}$; note that in fact $P_n = P\,_{\lfloor n \rfloor}$.  For $r \geq 1$ and $\alpha \in (0,1)$, we define the $(r,\alpha)$-propagation seminorm by 
  \begin{gather*}
    \nu_{r,\alpha}(a) = \sup_{n \geq 1} \Bigl ( n^r \bigl (\|(1 - P_n) a (P_{n - n^\alpha})\| + \|(P_{n - n^\alpha}) a (1 - P_n)\| \bigr ) \Bigr ) 
  \end{gather*}
  for $a \in \Cstarred(W, S, q)$.
\end{definition}

\begin{remark}
  \label{rem:sobolev-propagation-seminorm}
  On their domain of definition, the propagation seminorms are just pseudo-seminorms, as they can take the value $\infty$.  For the purpose of comparing Sobolev norms and propagation seminorms, it is practical to consider this framework, as it allows us to compare both on arbitrary elements of a Hecke C*-algebras $\Cstarred(W, S, q) \subseteq \ltwo(W)$.
\end{remark}

The next proof follows the arguments of \cite[Proposition 1.3]{jolissaint1989-k-theory} adapted to the setup of Hecke C*-algebras.
\begin{proposition}
  \label{prop:sobolev-norm-comparison}
  Let $(W, S)$ be a right-angled Coxeter system and $q \in \RR_{>0}^S$ a deformation parameter.
  \begin{enumerate}
  \item For every $n \geq 0$ there is $C > 0$ such that
    \begin{gather*}
      \| a \|_{2,n} \leq C (\nu_{n+1,1/2}(a) + \|a\|) \qquad \text{ for all } a \in \Cstarred(W, S, q)
      \eqstop
    \end{gather*}
  \item For every $r \geq 1$, $\alpha \in (0,1)$ there is $n \geq 0$ and $C > 0$ such that
    \begin{gather*}
      \nu_{r,\alpha}(a) \leq C \| a \|_{2,n}  \qquad \text{ for all } a \in \ltwo_{\mathrm{RD}}(W, S, q)
      \eqstop
    \end{gather*}
  \end{enumerate}
\end{proposition}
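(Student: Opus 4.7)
The plan is to follow Jolissaint's strategy from \cite[Proposition 1.3]{jolissaint1989-k-theory}, replacing the inputs for the free group setting with the rapid decay estimate of Proposition~\ref{prop:inclusion-rd-algebra}(1) and the length bounds extracted from the combinatorics of Section~\ref{sec:combinatorics-hecke-algebra}.

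For (1), I would fix $N \geq 0$ and set $Q_j := P_j - P_{j-1}$, with $Q_0 := P_0$.  Since $a \delta_e \in \ltwo(W)$ decomposes orthogonally as $\sum_j Q_j a \delta_e$, one obtains
\begin{gather*}
  \|a\|_{2, N}^2 = \sum_{j \geq 0} (1 + j)^{2N} \|Q_j a \delta_e\|^2 \eqstop
\end{gather*}
The contributions $j \in \{0, 1\}$ are absorbed in a multiple of $\|a\|^2$ using $\|Q_j a \delta_e\| \leq \|a\|$.  For $j \geq 2$ the chain
\begin{gather*}
  \|Q_j a \delta_e\| \leq \|(1 - P_{j-1}) a P_0\| \leq \|(1 - P_{j-1}) a P_{(j-1) - (j-1)^{1/2}}\| \leq \frac{\nu_{N+1, 1/2}(a)}{(j-1)^{N+1}}
\end{gather*}
holds: the first step uses $Q_j \leq 1 - P_{j-1}$ and $\delta_e = P_0 \delta_e$; the second uses $P_0 \leq P_{(j-1) - (j-1)^{1/2}}$, which is valid for $j \geq 2$; and the third applies the definition of the propagation seminorm at $n = j - 1$.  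The remaining series $\sum_{j \geq 2} (1 + j)^{2N} / (j - 1)^{2N + 2}$ converges, which yields the first inequality.

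For (2) I would decompose $a = \sum_j a_j$ with $a_j$ supported on the sphere of radius $j$.  The key combinatorial input, supplied by Theorem~\ref{thm:skipping-cancellation-normal-form} and Corollary~\ref{cor:hecke-multiplication-combinatorics}, is the length bound $||y| - |x|| \leq |u| \leq |y| + |x|$ for every $u \in \mathrm{Prod}(y, x)$.  This forces $(1 - P_k) a_j P_{k - k^\alpha} = 0$ whenever $j \leq k^\alpha$, and similarly $P_{k - k^\alpha} a_j (1 - P_k) = 0$ whenever $j < k^\alpha$, so both summands in $\nu_{r, \alpha}(a)$ are dominated by $\sum_{j > k^\alpha} \|a_j\|$.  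The rapid decay estimate $\|a_j\| \leq C_1 (1 + j) \|a_j\|_2$ combined with a Cauchy--Schwarz application with weight $(1 + j)^{-s}$ gives
\begin{gather*}
  \sum_{j > k^\alpha} \|a_j\| \leq C_1 \Bigl( \sum_{j > k^\alpha} (1 + j)^{2 - 2s} \Bigr)^{1/2} \|a\|_{2, s} \eqstop
\end{gather*}
For $s > 3/2$ an integral comparison bounds the $\ell^2$ tail by $C_2 k^{\alpha (3 - 2s)}$, so $k^r \|(1 - P_k) a P_{k - k^\alpha}\| \leq C\, k^{r + \alpha(3/2 - s)} \|a\|_{2, s}$, and the same estimate applies to the adjoint-like piece.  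Choosing an integer $n > r/\alpha + 3/2$ makes the prefactor uniformly bounded in $k$ and yields $\nu_{r, \alpha}(a) \leq C' \|a\|_{2, n}$.

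The main technical point is identifying, via the product analysis of Section~\ref{sec:combinatorics-hecke-algebra}, exactly which sphere components $a_j$ contribute to the mixed terms $(1 - P_k) a P_{k - k^\alpha}$ and $P_{k-k^\alpha} a (1 - P_k)$; once this is settled, the remaining work is a Cauchy--Schwarz step together with a standard integral comparison, with the choice of Sobolev exponent $n$ dictated by the linear factor $(1 + j)$ produced by the Haagerup-type inequality and the cutoff $k^\alpha$.
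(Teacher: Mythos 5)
Your argument is correct.  For part (1) it is essentially the paper's proof in different bookkeeping: the paper bounds the tail sums $\sum_{|w|>k}|a_w|^2$ by $k^{-2(n+1)}\nu_{n+1,1/2}(a)^2$ exactly as you do (via $\delta_e = P_{k-k^{1/2}}\delta_e$) and then converts $(1+|w|)^{2n}$ into a telescoping sum over $k$; your cruder sphere-by-tail bound $\|Q_j a\delta_e\|^2 \leq \|(1-P_{j-1})aP_0\|^2$ followed by the convergent series $\sum_j (1+j)^{2N}(j-1)^{-2N-2}$ shortcuts that summation-by-parts step and is perfectly valid.  For part (2) your execution genuinely differs: the paper introduces the weighted element $b = \sum_w a_w(1+|w|)^{r/\alpha}T_w$, dominates the matrix entries of $(1-P_n)aP_{n-n^\alpha}$ entrywise by $(1+n^\alpha)^{-r/\alpha}$ times those of $b$ (using the same combinatorial fact you isolate, that only $w$ with $|w|\geq n^\alpha$ contribute), and then applies the global Haagerup inequality $\|b\|\leq C_0\|b\|_{2,2}$ once, landing on $n = 2+r/\alpha$; you instead decompose over spheres, kill the small spheres with the two-sided length bound for $\mathrm{Prod}(y,x)$, and combine the sphere-wise Haagerup inequality with Cauchy--Schwarz, landing on any $n > r/\alpha + 3/2$.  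Both routes are standard variants of Jolissaint's argument and give the same qualitative conclusion; the paper's weighting trick avoids the triangle inequality over spheres (and hence the extra Cauchy--Schwarz loss), while yours only needs the sphere-wise linear-growth estimate of Caspers--Klisse--Larsen, which, note, appears in the \emph{proof} of Proposition~\ref{prop:inclusion-rd-algebra} rather than in its statement --- if you only invoke the stated inequality $\|a_j\|\leq C\|a_j\|_{2,2} = C(1+j)^2\|a_j\|_2$ your argument still closes with $s > r/\alpha + 5/2$.
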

\begin{proof}
  It is clear that $\|\cdot\|_{2,0} = \|\cdot\|_2 \leq \|\cdot\|$.  So let $n \geq 1$ and $a \in \Cstarred(W, S, q)$.  By Remark~\ref{rem:sobolev-norm-monotone}, we may assume that $n \in \NN_0$.  We write $a = \sum_w a_w T_w$, with the series convergent in $\ltwo(W)$.  For all $k \geq 1$, we have
  \begin{align*}
    \sum_{|w| > k} |a_w|^2
    =
    \|(1 - P_k)a\delta_e\|_2^2
    =
    \|(1 - P_k)a P_{k - k^{1/2}} \delta_e\|_2^2
    \leq
    k^{-2r} \nu_{r, 1/2}(a)^2
    \eqstop
  \end{align*}
  Let us note an auxiliary estimate for arbitrary $m \geq 2$.  We have
  \begin{align*}
    (1 + m)^{2n}
    & =
      2n \int_0^m (1 + t)^{2n - 1} \rmd t + 1\\
    & \leq
      4n \sum_{k = 1}^m (1 + k)^{2n - 1} \\
    & \leq
      4n (\sum_{k = 1}^{m-1} (1 + k)^{2n - 1} + 4n (\sum_{k = 1}^{m-1} (1 + m)^{2n-2} + \dotsc )) \\
    & =
      \sum_{l = 1}^{2n} (4n)^l \sum_{k = 1}^{m-1} (1 + k)^{2n - l} \\
    & \leq
      \sum_{l = 1}^{2n} (4n)^l \sum_{k = 1}^{m-1} (1 + k)^{2n - 1} \\
    & \leq
      (4n)^{2n + 1} \sum_{k = 1}^{m-1} (1 + k)^{2n - 1},
  \end{align*}
 where the second inequality uses iteratively a variant of the first inequality.
  Let us now combine these two estimates.  We first infer that 
  \begin{align*}
    \|a\|_{2,n}^2
    & =
      \sum_w |a_w|^2 (1 + |w|)^{2n} \\
    & \leq
      |a_e|^2 + \sum_{w \colon |w| = 1} |a_w|^2 2^{2n} + \sum_{w \colon |w| \geq 2} |a_w|^2 (4n)^{2n + 1} \sum_{k = 1}^{|w| - 1} (1 + k)^{2n - 1}  \\
    & =
      |a_e|^2 + \sum_{w \colon |w| = 1} |a_w|^2 2^{2n} + (4n)^{2n + 1} \sum_{k = 1}^\infty  (1 + k)^{2n - 1}  \sum_{w \colon |w| > k} |a_w|^2 \\
    & \leq
      2^{2n}(|a_e|^2 + \sum_{w \colon |w| = 1} |a_w|^2) + (4n)^{2n + 1} \sum_{k = 1}^\infty  (1 + k)^{2n - 1}  \sum_{w \colon |w| > k} |a_w|^2 \\
    & \leq
      2^{2n} \|a\|^2 + (4n)^{2n + 1} \sum_{k = 1}^\infty  (1 + k)^{2n - 1}  \sum_{w \colon |w| > k} |a_w|^2
      \eqstop
  \end{align*}
  Continuing to estimate the second summand, find that
  \begin{gather*}
    (4n)^{2n + 1} \sum_{k = 1}^\infty  (1 + k)^{2n - 1}  \sum_{w \colon |w|  > k} |a_w|^2
    \leq
    (4n)^{2n + 1} \sum_{k = 1}^\infty  \frac{(1 + k)^{2n - 1}}{k^{2(n+1)}} \nu_{n+1, 1/2}(a)^2
    \eqstop
  \end{gather*}
  So for $C = (\max\{2^n, (4n)^{2n + 1} \sum_{k = 1}^\infty  \frac{(1 + k)^{2n - 1}}{k^{2(n+1)}})^{1/2} \})^{1/2}$, we obtain the inequality
  \begin{gather*}
    \|a\|_{2, n}^2
    \leq
    C^2 (\|a\|^2 + \nu_{n+1, 1/2}(a)^2)
    \leq
    C^2 (\|a\| + \nu_{n+1, 1/2}(a))^2
  \end{gather*}
  and thus the desired estimate $\|a\|_{2,n}  \leq C (\|a\| + \nu_{n+1,1/2}(a))$.
  
  Let us now consider the second item.  Let $a \in \ltwo_{\mathrm{RD}}(W, S, q)$ and write $a = \sum_w a_w T_w$, convergent in $\ltwo(W)$.  We put $b = \sum_w a_w (1 + |w|)^{\frac{r}{\alpha}} T_w$.  Then $b \in \ltwo_{\mathrm{RD}}(W,S,q)$ and hence $b \in \Cstarred(W, S, q)$ by Proposition~\ref{prop:inclusion-rd-algebra}.  By Proposition~\ref{prop:inclusion-rd-algebra}  there is $C_0 > 0$ satisfying $\|c\| \leq C_0 \|c\|_{2,2}$ for all $c \in \Cstarred(W, S, q)$.  We will use this and the fact that for every $u \in \mathrm{Prod}(w,x)$ we have $|u| \leq |w| + |x|$, implying that if $|u| > n$ and $|x| \leq n - n^\alpha$, then $|w| \geq n^\alpha$ holds.  Let us calculate
  \begin{align*}
    \|(1 - P_n)aP_{n - n^\alpha} \xi\|^2
    & \leq
      \sum_{|u| > n} \bigl ( \sum_{w,x \in W: u \in \mathrm{Prod}(w,x)} p_{u,w,x} |a_{w}| (P_{n - n^\alpha}\xi)(x)  \bigr )^2 \\
    & \leq 
      (1 + n^\alpha)^{-\frac{2r}{\alpha}} \sum_{|u| > n} \bigl ( \sum_{w,x \in W:u \in \mathrm{Prod}(w,x)}  p_{u,w,x} |a_{w}| (1 + |w|)^{\frac{r}{\alpha}} (P_{n - n^\alpha}\xi)(x)  \bigr )^2 \\
    & =
      (1 + n^\alpha)^{-\frac{2r}{\alpha}} \|( 1 - P_n) b P_{n - n^\alpha} \xi\|^2 \\
    & \leq
      (1 + n^\alpha)^{-\frac{2r}{\alpha}} C_0^2 \|b\|_{2,2}^2  \|\xi\|^2 \\
    & =
      (1 + n^\alpha)^{-\frac{2r}{\alpha}} C_0^2 \|a\|_{2,2 + \frac{r}{\alpha}}^2 \|\xi\|^2
      \eqstop
  \end{align*}
  Put $C = C_0 \cdot \sup_n \frac{(1 + n^\alpha)^{\frac{r}{\alpha}}}{(1 + n)^r}$.  Then we find that
  \begin{gather*}
    \nu_{r, \alpha}(a) \leq C \|a\|_{2,2 + \frac{r}{\alpha}}
    \eqstop
  \end{gather*}
  So the second item is proven with the constant $C$ and $n = 2 + \frac{r}{\alpha}$.
\end{proof}

The proof of the following lemma is essentially identical to the first part of the proof of \cite[Theorem 1.4]{jolissaint1989-k-theory}, which treats the case of groups with property RD.  For the reader's convenience, we provide full details.
\begin{lemma}
  \label{lem:propagation-norm-power-closed}
  Let $(W, S)$ be a right-angled Coxeter system and $q \in \RR_{> 0}^S$ a deformation parameter.  Let $r \geq 0$, $\alpha \in (0,1)$ and $C < 1$.  There is $s \geq 0$ and a function $h\colon \NN_0 \lra \RR_{\geq 0}$ vanishing at infinity such that for all $a \in \Cstarred(W, S, q)$ satisfying $\|a\| \leq C$ and all $1 \leq M \leq N$, we have
  \begin{gather*}
    \nu_{r, \alpha}(\sum_{l = M}^N a^l) \leq h(M) (\nu_{s, \alpha/2}(a) + \|a\|)
    \eqstop
  \end{gather*}
\end{lemma}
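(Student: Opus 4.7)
The strategy is to estimate $\|(1-P_n) (\sum_{l=M}^N a^l) P_{n-n^\alpha}\|$ by bounding each summand $\|(1-P_n) a^l P_{n-n^\alpha}\|$ separately, using a telescoping decomposition that reflects the fact that $a^l$ can transport mass a distance $n^\alpha$ only in $l$ discrete steps. The companion estimate $\|P_{n-n^\alpha} (\sum a^l) (1-P_n)\|$ follows by taking adjoints, since $\nu_{r,\alpha/2}(a^*) = \nu_{r,\alpha/2}(a)$ and $\|a^*\| = \|a\|$.

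Fix $n$ large enough that $n - n^\alpha \geq n/2$ and fix $l \geq 1$. Set intermediate radii $m_k = n - n^\alpha + k n^\alpha / l$ for $k = 0, \dotsc, l$. Iterating the identity $(1-P_{m_k}) a = (1-P_{m_k}) a P_{m_{k-1}} + (1-P_{m_k}) a (1-P_{m_{k-1}})$ on the rightmost ``unreduced'' factor in $(1-P_{m_l}) a^l P_{m_0}$ yields a decomposition into $l$ summands $T_1, \dotsc, T_l$, each containing exactly one ``good'' factor $(1-P_{m_k}) a P_{m_{k-1}}$ together with $l - 1$ other factors of norm at most $\|a\|$; in particular, the boundary summand $T_l$ ends with the good factor $(1-P_{m_1}) a P_{m_0}$. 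When $l \leq n^{\alpha/2}$ (Case 1), the spacing $m_k - m_{k-1} = n^\alpha/l \geq n^{\alpha/2} \geq m_k^{\alpha/2}$ together with $m_k \geq n/2$ allows one to bound each good factor via the definition of the propagation seminorm by $\nu_{r,\alpha/2}(a)/m_k^r \leq 2^r \nu_{r,\alpha/2}(a)/n^r$, giving
\begin{gather*}
  \|(1-P_n) a^l P_{n-n^\alpha}\| \leq 2^r l \|a\|^{l-1} \nu_{r,\alpha/2}(a) / n^r
  \eqstop
\end{gather*}
When $l > n^{\alpha/2}$ (Case 2), the crude bound $\|(1-P_n) a^l P_{n-n^\alpha}\| \leq \|a\|^l$ is used instead.

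Multiplying by $n^r$ and summing over $l \in \{M, \dotsc, N\}$ gives two contributions. The Case 1 contribution is majorised by $2^r \nu_{r,\alpha/2}(a) \sum_{l \geq M} l C^{l-1}$, a vanishing tail of a convergent series. The Case 2 contribution is majorised by $n^r \|a\| C^{\max(M, n^{\alpha/2})-1}/(1-C)$; splitting at the threshold $n = M^{2/\alpha}$, for $n \leq M^{2/\alpha}$ one uses $n^r \leq M^{2r/\alpha}$ together with $\max \geq M$ to bound it by $M^{2r/\alpha} \|a\| C^{M-1}/(1-C)$, while for $n > M^{2/\alpha}$ the bound is $\|a\|/(1-C)$ times the supremum of $n^r C^{n^{\alpha/2} - 1}$ over $n \geq M^{2/\alpha}$, which is finite and tends to $0$ as $M \to \infty$. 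The finitely many small $n$ with $n - n^\alpha < n/2$ are handled by a direct norm estimate $n^r \|\sum_l a^l\| \leq \mathrm{const} \cdot \|a\| C^{M-1}/(1-C)$, also absorbed into $h$. Taking $s = r$ and letting $h(M)$ be the sum of the resulting decaying functions of $M$ yields the stated bound.

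The main obstacle is the joint uniform-in-$n$ control of the Case 2 contribution: the competing factors $n^r$ (growing in $n$) and $C^{n^{\alpha/2}}$ (decaying in $n$) must be reconciled, and only the threshold split at $n = M^{2/\alpha}$ produces a majorant depending solely on $M$ and vanishing as $M \to \infty$. The other delicate point is verifying that the telescoping procedure produces exactly $l$ terms of the claimed uniform shape, in particular that the boundary term carries a legitimate good factor with spacing $n^\alpha/l$, so that under the Case 1 constraint it obeys the same bound as the other $l - 1$ summands.
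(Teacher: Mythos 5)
Your proof is correct and follows essentially the same route as the paper's: a telescoping insertion of intermediate projections into $a^l$ producing $l$ summands each controlled by one ``good'' factor bounded via the $(\cdot,\alpha/2)$-propagation seminorm, a split of the $l$-sum at $l \approx n^{\alpha/2}$ with the large-$l$ tail handled by the geometric series, and the same threshold $n = M^{2/\alpha}$ for the uniform-in-$n$ control of that tail. The only (harmless) deviations are that you use $l$-dependent spacing $n^\alpha/l$ and retain the factor $\|a\|^{l-1} \leq C^{l-1}$, which lets you take $s = r$ and extract the decay in $M$ from the convergent series $\sum_l l C^{l-1}$, whereas the paper uses fixed spacing $n^{\alpha/2}$, discards those factors, and instead takes $s \geq 2(r+\alpha)/\alpha$ so that $n^{r+\alpha-\alpha s/2}$ decays over $n \geq M$.
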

\begin{proof}
  Let $C < 1$ and let $a \in \Cstarred(W, S, q)$ satisfy $\|a\| \leq C$.  For every $n \geq 1$ and every $1 \leq l \leq n^{\alpha/2}$, we have  
  \begin{equation}
          \label{eq:propagation-decomposition}
    \begin{split}
      & \hspace{1em}
        (1 - P_n) a^l P_{n - n^\alpha} \\
      & =
        (1 - P_n) a P_{n - n^{\alpha/2}} a^{l-1} P_{n - n^\alpha}
        +
        (1 - P_n) a (1 - P_{n - n^{\alpha/2}}) a^{l-1} P_{n - n^\alpha} \\
      & =
        (1 - P_n) a P_{n - n^{\alpha/2}} a^{l-1} P_{n - n^\alpha}
        +
        (1 - P_n) a (1 - P_{n - n^{\alpha/2}}) a P_{n - 2n^{\alpha/2}} a^{l-2} P_{n - n^\alpha} \\
      & \qquad
        + \dotsm +
        (1 - P_n) \bigl ( \prod_{i = 1}^{l-1} a (1 - P_{n - i n^{\alpha/2}}) \bigr ) a P_{n - n^\alpha}
        \eqstop
    \end{split}
  \end{equation}
  For $1 \leq i \leq l - 1$ write $m = n - (i -1)n^{\alpha/2}$.  Then we have
  \begin{gather*}
    m - m^{\alpha/2} = n - (i -1)n^{\alpha/2} - (n - (i -1)n^{\alpha/2})^{\alpha/2} \geq n - in^{\alpha/2}
  \end{gather*}
  and hence $P_{m - m^{\alpha/2}} \geq P_{n - in^{\alpha/2}}$.

  So for arbitrary $s \geq 0$, we can consider the norm of the $i$-th summand in \eqref{eq:propagation-decomposition} and find that
    \begin{align*}
    \|(1 - P_n) & a (1 - P_{n - n^{\alpha/2}}) a (1 - P_{n - 2n^{\alpha/2}}) a \dotsm (1 - P_{n - (i -1)n^{\alpha/2}}) a P_{n - i n^{\alpha/2}} a^{l-i} P_{n - n^\alpha}\| \\
    & \leq
      \|(1 - P_{n - (i -1)n^{\alpha/2}}) a P_{n - i n^{\alpha/2}}\| \\
    & =
      \|(1 - P_m) a P_{n - i n^{\alpha/2}}a^* (1 - P_m)\|^{1/2} \\
    & \leq
      \|(1 - P_m) a P_{m - m^{\alpha/2}}a^* (1 - P_m)\|^{1/2} \\
    & =
      \|(1 - P_m) a P_{m - m^{\alpha/2}}\| \\
    & \leq
      m^{-s}\nu_{s, \alpha/2}(a)
      \eqstop
  \end{align*}
  We thus obtain for $1 \leq l \leq n^{\alpha/2}$ that
  \begin{gather*}
    \bigl \| (1 - P_n) a^l P_{n - n^\alpha} \bigr \|
    \leq
    \sum_{i = 0}^{l - 1} (n - in^{\alpha/2})^{-s}\nu_{s, \alpha/2}(a)
    \leq
    \sum_{i = 0}^{l - 1} (n^{\alpha/2})^{-s}\nu_{s, \alpha/2}(a)
    =
    l n^{-\frac{\alpha s}{2}}\nu_{s, \alpha/2}(a)
    \eqstop
  \end{gather*}
  We infer that we also have
  \begin{gather*}
    \bigl \| (P_{n - n^\alpha}) a^l (1 - P_n) \bigr \|
    =
    \bigl \| (1 - P_n) (a^*)^l (P_{n - n^\alpha})  \bigr \|
    \leq
    l n^{-\frac{\alpha s}{2}}\nu_{s, \alpha/2}(a^*)
    =
    l n^{-\frac{\alpha s}{2}}\nu_{s, \alpha/2}(a)
    \eqstop
  \end{gather*}
  Let us now estimate the propagation seminorm of $\sum_{l = M}^N a^l$.  By definition, we have
  \begin{gather*}
    \nu_{r, \alpha} (\sum_{l = M}^N a^l)
    =
    \sup_{n \geq 1} \Bigl ( n^r \bigl (\|(1 - P_n)( \sum_{l = M}^N a^l) (P_{n - n^\alpha})\| + \|(P_{n - n^\alpha}) (\sum_{l = M}^N a^l) (1 - P_n)\| \bigr ) \Bigr ) 
  \end{gather*}
  Since $\|(P_{n - n^\alpha}) (\sum_{l = M}^N a^l) (1 - P_n)\| = \|(1 - P_n) (\sum_{l = M}^N (a^*)^l) (P_{n - n^\alpha})\|$, it suffices to consider the first summand in the supremum.  We find  
  \begin{equation}
    \label{eq:propagation-seminorm-estimate}
    \begin{split}
      & \hspace{1em} \sup_{n \geq 1} n^r \| (1 - P_n) \sum_{l = M}^{N} a^l (P_{n - n^\alpha}) \| \\
      & \leq    
        \sup_{n \geq 1} \left ( n^r \| (1 - P_n)\sum_{l = M}^{\lfloor n^{\alpha/2} \rfloor} a^l P_{n - n^\alpha}\| + n^r \sum_{l = \max(\lceil n^{\alpha/2} \rceil,  M)}^\infty \|a^l\| \right ) \\
      & \leq    
        \sup_{n \geq M} \left ( n^r \| (1 - P_n)\sum_{l = M}^{\lfloor n^{\alpha/2} \rfloor} a^l P_{n - n^\alpha}\| \right ) + \sup_{n \geq 1} \left (n^r\|a\|^{\max{(\lceil n^{\alpha/2} \rceil, M)}} \frac{1}{1 - \|a\|} \right )
        \eqstop
    \end{split}
  \end{equation}
  We estimate both summands.  For the first one we find
  \begin{align*}
    \sup_{n \geq M} \left ( n^r \| (1 - P_n)\sum_{l = M}^{n^{\alpha/2}} a^l P_{n - n^\alpha}\| \right )
    & \leq
      \sup_{n \geq M} \left ( n^r \sum_{l = M}^{n^{\alpha/2}} l n^{-\frac{\alpha s}{2}}\nu_{s, \alpha/2}(a) \right ) \\
    & \leq
      \sup_{n \geq M} \left ( n^r \frac{n^\alpha + n^{\alpha/2}}{2}  n^{-\frac{\alpha s}{2}} \right ) \nu_{s, \alpha/2}(a) \\
    & \leq
      \sup_{n \geq M} \left ( n^{r + \alpha - \frac{\alpha s}{2}} \right ) \nu_{s, \alpha/2}(a)
      \eqstop
  \end{align*}
  For $s \geq \frac{2(r + \alpha)}{\alpha}$ we can estimate the last term from above by $M^{r + \alpha - \frac{\alpha s}{2}} \nu_{s , \alpha/2}(a)$.

  The second summand in \eqref{eq:propagation-seminorm-estimate} can be estimated as follows.  For $n^{\alpha/2} \leq M$, we have
  \begin{gather*}
    n^r \|a\|^{\max{(\lceil n^{\alpha/2} \rceil, M)}} \frac{1}{1 - \|a\|}
    =
    (n^{\alpha/2})^{\frac{2r}{\alpha}} \|a\|^{\max{(\lceil n^{\alpha/2} \rceil, M)}} \frac{1}{1 - \|a\|}
    \leq
    M^{\frac{2r}{\alpha}} \frac{1}{1 - C} \|a\|^M 
    \eqstop
  \end{gather*}
  Further, we have
  \begin{align*}
    \sup_{n \geq M^{2/\alpha}} \left ( n^r\|a\|^{\max{(\lceil n^{\alpha/2} \rceil, M)}} \frac{1}{1 - \|a\|} \right )
    & \leq
    \sup_{n \geq M^{2/\alpha}} \left ( n^rC^{\lceil n^{\alpha/2} \rceil - M}  \right ) \|a\|^M \frac{1}{1 - C}
  \end{align*}
  We infer that there is constant $c_{\alpha, r} > 0$ and an integer $d \geq 1$ such that
  \begin{gather*}
    \sup_{n \geq 1} \left ( n^r\|a\|^{\max{(\lceil n^{\alpha/2} \rceil, M)}} \frac{1}{1 - \|a\|} \right )
    \leq
    c_{\alpha, r} M^d \|a\|^M
    \eqstop
  \end{gather*}
  Observing that the analogous estimates hold for $a^*$, we can continue the estimate from \eqref{eq:propagation-seminorm-estimate} and find that for $s > \frac{2(r + \alpha)}{\alpha}$, there is a function $h\colon \NN_0 \lra \RR_{\geq 0}$ vanishing at infinity such that, 
  \begin{gather*}
    \nu_{r, \alpha}(\sum_{l = M}^N a^l)
    \leq
    2 \left (M^{r + \alpha - \frac{\alpha s}{2}} \nu_{s , \alpha/2}(a) + C_{\alpha, r} M^d \|a\|^M \right )
    \leq h(M) (\nu_{s, \alpha/2}(a) + \|a\|)
  \end{gather*}
  holds for all $1 \leq M \leq N$.
\end{proof}

The next result combines the work of Caspers-Klisse-Larsen \cite{caspersklisselarsen2021} reformulated in Proposition \ref{prop:inclusion-rd-algebra} and the estimates on propagation seminorms with the notion of empowered differential seminorms as introduced in Section~\ref{sec:smooth-subalgebras-revisited}.  As a result we show that the Sobolev norms give rise to an unconditional Fr{\'e}chet completion of the Hecke algebra which is a smooth subalgebra of the reduced Hecke-C*-algebra.
\begin{theorem}
  \label{thm:ltwo-rd-smooth}
  Let $(W, S)$ be a right-angled Coxeter system and let $q \in \RR_{> 0}^S$ be a deformation parameter.  Then the Sobolev norms $\rD_n(a) = \|a\|_{2,n}$ define an empowered differential seminorm $\rD$ on $\CC(W, S, q)$.
\end{theorem}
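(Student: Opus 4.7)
The plan is to apply Proposition~\ref{prop:empowered-differential-seminorm-equivalence} with the Sobolev norms $\rD_n(a) = \|a\|_{2,n}$ as the target family and the propagation seminorms $\rD'_n(a) = \nu_{n+1, 1/2}(a)$ as the auxiliary family.  First I would dispose of the parts of Definition~\ref{def:empowered-differential-seminorm} that hold for $\rD$ directly.  Properties (i) and (ii) are immediate, since the Sobolev seminorms are genuine seminorms and $\rD_0(a) = \|a\|_2 \leq \|a\|$ via the canonical embedding $\Cstarred(W,S,q) \hra \ltwo(W)$.  For (iii), Proposition~\ref{prop:inclusion-rd-algebra}(ii) supplies a $t \geq 0$ with $\|ab\|_{2,k} \leq \|a\|_{2,k}\|b\|_{2, k+t}$, so the polynomial $f_k = X_k Y_{k+t}$ (with no constant term) does the job.

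The substantive part is condition (iv), and rather than attack it head-on I would verify conditions (iv') and (v) of Proposition~\ref{prop:empowered-differential-seminorm-equivalence} for the propagation seminorms $\rD'_n$.  The two bounds in (v) are exactly what Proposition~\ref{prop:sobolev-norm-comparison} delivers: item (1) of that proposition gives $\rD_k(a) \leq C(\rD'_k(a) + \|a\|)$, and item (2) gives $\rD'_k(a) \leq C'\rD_{n(k)}(a)$ for some integer $n(k)$.  Both estimates are polynomial with zero constant coefficient in the required variables.

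For (iv'), fix any $C_0 < 1$ and let $a \in \CC(W,S,q)$ satisfy $\|a\| < C_0$.  Lemma~\ref{lem:propagation-norm-power-closed} applied with $r = k+1$, $\alpha = 1/2$ yields $s \geq 0$ and a function $h$ vanishing at infinity such that
\begin{gather*}
  \rD'_k\Bigl(\sum_{l=M}^N a^l\Bigr)
  =
  \nu_{k+1, 1/2}\Bigl(\sum_{l=M}^N a^l\Bigr)
  \leq
  h(M)\bigl(\nu_{s, 1/4}(a) + \|a\|\bigr).
\end{gather*}
The term $\nu_{s, 1/4}(a)$ does not belong to the family $(\rD'_n)_n$, so the final step is to re-express it: Proposition~\ref{prop:sobolev-norm-comparison}(2) applied with parameters $(s, 1/4)$ yields $\nu_{s, 1/4}(a) \leq C_1\|a\|_{2,m}$ for some $m$, and Proposition~\ref{prop:sobolev-norm-comparison}(1) then gives $\|a\|_{2,m} \leq C_2(\rD'_m(a) + \|a\|)$.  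Chaining these inequalities yields a bound of the required shape $h(M) g_k((\rD'_n(a))_n, \|a\|)$ with $g_k$ a polynomial (in fact linear) with zero constant coefficient.

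The main obstacle has really been pushed into the preceding results.  Lemma~\ref{lem:propagation-norm-power-closed} encapsulates the technical core, a telescoping argument distributing the spherical projections across the $l$ factors of $a^l$, and Proposition~\ref{prop:sobolev-norm-comparison} is the bridge between the intrinsic Sobolev scale and the operator-theoretic propagation scale, built on the Caspers-Klisse-Larsen rapid decay estimate reformulated in Proposition~\ref{prop:inclusion-rd-algebra}.  The present theorem's role is to package these analytic inputs into the abstract framework of Section~\ref{sec:smooth-subalgebras-revisited}, exploiting the flexibility of Proposition~\ref{prop:empowered-differential-seminorm-equivalence} to allow an auxiliary family of seminorms indexed by a different $\alpha$ than the one appearing on the right-hand side of Lemma~\ref{lem:propagation-norm-power-closed}.
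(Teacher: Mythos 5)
Your proposal is correct and follows essentially the same route as the paper: both invoke Proposition~\ref{prop:empowered-differential-seminorm-equivalence} with the propagation seminorms $\nu_{\cdot,1/2}$ as the auxiliary family, feeding in Proposition~\ref{prop:inclusion-rd-algebra} for the product estimate, Lemma~\ref{lem:propagation-norm-power-closed} for the geometric-series bound, and Proposition~\ref{prop:sobolev-norm-comparison} for the mutual domination in condition (v). The only difference is cosmetic (your shift to $\nu_{n+1,1/2}$) plus the fact that you spell out the step the paper leaves implicit, namely converting the $\nu_{s,\alpha/2}$ term produced by Lemma~\ref{lem:propagation-norm-power-closed} back into the chosen family by passing through the Sobolev scale -- a detail you handle correctly.
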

\begin{proof}
 We put $\rD'_n(a) = \nu_{n, 1/2}(a)$ for $a \in \CC(W, S, q)$.  As Sobolev norms satisfy $\|a\|_{2,s} \leq \rD_{\lceil s \rceil}(a)$ for all $a \in \CC(W, S, q)$ and $s \geq 0$, Proposition~\ref{prop:empowered-differential-seminorm-equivalence} allows us to combine the estimates for  $\rD_n(ab)$ with $a, b \in \CC(W, S, q)$ proven in Propostion~\ref{prop:inclusion-rd-algebra} with the estimates on power series obtained for $\rD'$ in Lemma~\ref{lem:propagation-norm-power-closed} to infer that $\rD$ is an empowered differential seminorm on $\CC(W, S, q)$.
\end{proof}

\begin{corollary}
  \label{cor:unconditional-differential-seminorm}
  The algebra $\ltwo_{\mathrm{RD}}(W, S, q) \subseteq \Cstarred(W, S, q)$ is smooth.  It is the completion of $\CC(W, S, q)$ with respect to an unconditional empowered differential seminorm.
\end{corollary}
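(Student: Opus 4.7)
The plan is to assemble the corollary as a direct consequence of three results already established in the paper: Theorem~\ref{thm:ltwo-rd-smooth} (which provides the empowered differential seminorm structure), Proposition~\ref{prop:inclusion-rd-algebra} (which supplies the $\Cstar$-norm control by a Sobolev norm), and Theorem~\ref{thm:smooth-subalgebra-differential-seminorm} (which turns this data into a smooth subalgebra).

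First I would invoke Theorem~\ref{thm:ltwo-rd-smooth} to conclude that the family $\rD_n(a) := \|a\|_{2,n}$, $n \in \NN_0$, defines an empowered differential seminorm $\rD$ on the C*-normed *-algebra $\CC(W, S, q)$. By the definition of $\ltwo_{\mathrm{RD}}(W, S, q)$ together with Remark~\ref{rem:sobolev-norm-monotone} (which allows us to restrict to integer indices without changing the completion), the separation-completion of $\CC(W, S, q)$ with respect to $(\rD_n)_{n \in \NN_0}$ is precisely $\ltwo_{\mathrm{RD}}(W, S, q)$.

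Next I would verify the extra hypothesis of Theorem~\ref{thm:smooth-subalgebra-differential-seminorm}, namely the existence of $C > 0$ and $N \in \NN_0$ such that $\|a\| \leq C \rD_N(a)$ for all $a \in \CC(W, S, q)$. This is exactly the content of Proposition~\ref{prop:inclusion-rd-algebra}(i), with $N = 2$. Feeding this into Theorem~\ref{thm:smooth-subalgebra-differential-seminorm} then yields that the identity map on $\CC(W, S, q)$ extends to a continuous injection $\ltwo_{\mathrm{RD}}(W, S, q) \hra \Cstarred(W, S, q)$ whose image is a smooth subalgebra, establishing the first assertion.

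For the unconditionality claim, I would observe directly from the definition that each Sobolev seminorm is unconditional with respect to the basis $(T_w)_{w \in W}$: the identity
\begin{gather*}
  \Bigl \| \sum_w a_w T_w \Bigr \|_{2,n}^2 = \sum_w |a_w|^2 (1 + |w|)^{2n}
\end{gather*}
depends monotonically on the moduli $|a_w|$, so if $|a_w| \leq |b_w|$ for every $w \in W$ then $\|\sum_w a_w T_w\|_{2,n} \leq \|\sum_w b_w T_w\|_{2,n}$. Hence $\rD$ is an unconditional empowered differential seminorm in the sense of Definition~\ref{def:unconditional-seminorm-hecke}, completing the proof. No genuine obstacle remains at this stage: all the substantial work (the rapid decay estimate of Caspers-Klisse-Larsen recorded in Proposition~\ref{prop:inclusion-rd-algebra}, the propagation-seminorm analysis of Lemma~\ref{lem:propagation-norm-power-closed}, and the smooth subalgebra machinery of Theorem~\ref{thm:smooth-subalgebra-differential-seminorm}) has already been carried out, and the corollary is a clean packaging of these results.
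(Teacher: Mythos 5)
Your argument is correct and follows the same route as the paper: the paper's own (very terse) proof likewise cites Theorem~\ref{thm:ltwo-rd-smooth} for the empowered differential seminorm, Remark~\ref{rem:sobolev-norm-monotone} to identify the completion with $\ltwo_{\mathrm{RD}}(W, S, q)$, and notes that the components are plainly unconditional, with the smoothness resting implicitly on Theorem~\ref{thm:smooth-subalgebra-differential-seminorm} and Proposition~\ref{prop:inclusion-rd-algebra}(i) exactly as you spell out. Your version merely makes those implicit invocations explicit, which is a faithful (indeed slightly more careful) rendering of the intended proof.
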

\begin{proof}
  By Theorem~\ref{thm:ltwo-rd-smooth} the Sobolev norms define an empowered differential seminorm on $\CC(W, S, q)$.  The relevant completion is $\ltwo_{\mathrm{RD}}(W, S, q)$ by Remark~\ref{rem:sobolev-norm-monotone}.  Its components are esaily seen to be unconditional.
\end{proof}

\begin{theorem}
  \label{thm:quasi-derivation-hecke-case}
  Let $(W, S)$ be a right-angled, hyperbolic Coxeter system and $q \in \RR_{>0}^{S}$ a deformation parameter.  Then there is a constant $C > 0$ such that the assignment
  \begin{gather*}
    \Delta \colon T_w \mapsto \sum_{\substack{w = w_1 w_2 \\ |w| = |w_1| + |w_2|}} T_{w_1} \otimes T_{w_2}
  \end{gather*}
  defines a quasi-derivation with constant $C$ for every unconditional norm on $\CC(W, S, q)$.
\end{theorem}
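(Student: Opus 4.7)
The plan is to reduce the quasi-derivation inequality to a coefficient-wise domination on basis elements, exploiting the combinatorial analysis of Section~\ref{sec:combinatorics-hecke-algebra} to split the expansion of $\Delta(T_w T_x)$ into a contribution $L(w,x)$ dominated by $\Delta(T_w) \cdot T_x$ and a contribution $R(w,x)$ dominated by $T_w \cdot \Delta(T_x)$. Unconditionality of the seminorm and of the unconditional tensor product norm will then transfer this coefficient-wise bound to the desired seminorm inequality via \cite[Lemma 2.7]{puschnigg2010}.

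Unconditionality gives $\|{\sum_v c_v T_v}\| = \|{\sum_v |c_v| T_v}\|$, and likewise for the unconditional tensor norm, so it suffices to find, for all $w, x \in W$, a splitting $\Delta(T_w T_x) = L(w,x) + R(w,x)$ such that the absolute values of the basis coefficients of $L(w,x)$ are bounded by $C$ times those of $\Delta(T_w) \cdot T_x$ and symmetrically for $R(w,x)$. Expanding
\begin{gather*}
  \Delta(T_w T_x) = \sum_{v \in \mathrm{Prod}(w,x)} p_{v,w,x} \sum_{v = v_1 v_2 \text{ reduced}} T_{v_1} \otimes T_{v_2}\eqcomma
\end{gather*}
I would fix $v \in \mathrm{Prod}(w,x)$ and invoke Theorem~\ref{thm:skipping-cancellation-normal-form} to write $w = w_1 \mathbf{t} x_1^{-1}$, $x = x_1 \mathbf{t} x_2$ as reduced decompositions with $\mathbf{t} = t_1 \cdots t_k$ pairwise commuting, hence pairwise distinct (so $k \leq |S|$), and $v = w_1 \mathbf{t} x_2$ reduced.

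Given any reduced decomposition $v = v_1 v_2$, Tits' commutation-move solution of the word problem in right-angled Coxeter systems produces a reduced expression for $v$ obtained from $w_1 t_1 \cdots t_k x_2$ by commutations, in which the cut $v_1 \mid v_2$ falls at position $|v_1|$. Depending on whether this cut lies in the $w_1$-portion or in the $\mathbf{t} x_2$-portion (positions inside the commuting block $\mathbf{t}$ assigned to one side by convention), the summand $T_{v_1} \otimes T_{v_2}$ is placed in $L(w,x)$ or in $R(w,x)$. In the first case $v_1$ is a reduced prefix of $w$, whose complement $w_2'$ satisfies $v_2 \in \mathrm{Prod}(w_2', x)$ with $p_{v_2, w_2', x} = p_{v,w,x}$, so $T_{v_1} \otimes T_{v_2}$ already appears in $\Delta(T_w) \cdot T_x$ with the matching coefficient; the other case is symmetric.

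The main obstacle is controlling the multiplicity with which a given tensor $T_{v_1} \otimes T_{v_2}$ arises through the above matching. It may come from several $v \in \mathrm{Prod}(w,x)$, from several Theorem~\ref{thm:skipping-cancellation-normal-form} decompositions of the same $v$, and from several commutation-move rearrangements for the same $(v, v_1, v_2)$. This is the single place where hyperbolicity of $(W, S)$ enters the argument. The second bullet of Corollary~\ref{cor:hecke-multiplication-combinatorics} bounds $|\{v \in \mathrm{Prod}(w,x) : |v| = n\}|$ by a constant depending only on $(W, S)$; together with the elementary estimate $k \leq |S|$ (from pairwise distinctness of the commuting $t_i$) controlling the middle block, and the uniform coefficient bound $p_{v,w,x} \leq \prod_{s \in S} p_s$ from the third bullet of the same corollary, this yields a uniform constant $C = C(W, S, q)$ for which the coefficient-wise domination holds. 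The unconditional tensor product estimate \cite[Lemma 2.7]{puschnigg2010} then upgrades this to the quasi-derivation inequality of Definition~\ref{def:quasi-derivation}, uniformly over all unconditional seminorms on $\CC(W, S, q)$.
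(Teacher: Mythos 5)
Your overall architecture --- reduce the quasi-derivation inequality to a coefficient-wise domination of $\Delta(T_wT_x)$ by positive expressions built from $\Delta(T_w)$, $\Delta(T_x)$, $T_w$, $T_x$, and then transfer it to arbitrary unconditional seminorms via the cross-norm property of the unconditional tensor product --- is exactly the paper's. The gap is in the central combinatorial claim. You assert that every reduced decomposition $v = v_1v_2$ of an element $v \in \mathrm{Prod}(w,x)$ can be matched, according to where its cut falls after commutation moves, with a term that \emph{already appears} in $\Delta(T_w)(1 \ot T_x)$ or $(T_w \ot 1)\Delta(T_x)$ with a comparable coefficient. This is false: commutation moves can transport letters originating in $x$ to the left of the cut (and letters of $w$ to the right), so $v_1$ need not be a weak-order prefix of $w$, nor need $v_1$ lie in $\mathrm{Prod}(w,x')$ for any prefix $x'$ of $x$. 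Concretely, take $s \neq t$ with $st = ts$, $w = s$, $x = t$. Then $\Delta(T_sT_t) = \Delta(T_{st})$ contains the summand $T_t \ot T_s$, but $\Delta(T_s)(1\ot T_t) + (T_s\ot 1)\Delta(T_t) = T_e\ot T_{st} + 2\, T_s\ot T_t + T_{st}\ot T_e$ does not; so no splitting $L + R$ of the kind you describe can exist, with any constant.

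The difficulty is therefore not one of multiplicity or overcounting --- since all coefficients involved are nonnegative, overcounting on the right-hand side is harmless for an upper bound, and the coefficient of a fixed $T_{v_1}\ot T_{v_2}$ on the left is just $p_{v,w,x}$ for the single $v = v_1v_2$ --- but one of \emph{missing terms}. Consequently your invocation of hyperbolicity, via the cardinality bound in the second bullet of Corollary~\ref{cor:hecke-multiplication-combinatorics}, is aimed at the wrong obstacle. What the paper actually does, and what your argument is missing, is to insert a bounded ``smearing'' operator $\vphi_r(T_a \ot T_b) = \sum_{c,d \in \rB_r(e)} T_aT_c \ot T_dT_b$ between the two sides: hyperbolicity enters through the stability of geodesics, guaranteeing a uniform $D'$ such that every reduced word for $v$ (equivalently, every $v_1$ occurring in a reduced decomposition of $v$) stays within distance $D'$ of the one chosen representative word $w_1 t_1 \dotsm t_k x_2$, so that after applying $\vphi_{D'}$ (and an additional $\vphi_{|S|}$ to absorb cuts inside the commuting block) every summand of $\Delta(T_v)$ is dominated by the images of the cuts of that single word. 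The quasi-derivation inequality then holds with $C$ controlled by the norm of $\vphi_{D'}\circ\vphi_{|S|}$, which is bounded for every unconditional norm because left and right multiplication by the finitely many $T_a$ with $a \in \rB_{D'+|S|}(e)$ are. Your matching of prefixes of $w_1$ (respectively, of elements containing $w_1 t_1\dotsm t_k$ as an initial piece) with terms of $\Delta(T_w)(1\ot T_x)$ (respectively $(T_w\ot 1)\Delta(T_x)$) is correct as far as it goes; it simply does not cover all reduced decompositions of $v$, and the remaining ones are exactly where the geodesic-stability argument is indispensable.
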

\begin{proof}
  Fix an unconditional norm on $\CC(W, S, q)$, which we denote by $\| \cdot \|$.  The associated unconditional tensor product norm on $\CC(W, S, q) \otimes \CC(W, S, q)$ will likewise be denoted by $\| \cdot \|$.  We will show that there is a bounded map $\vphi$ on $\CC(W, S, q) \ot \CC(W, S, q)$ satisfying
  \begin{gather*}
    \Delta(T_wT_x) \leq \vphi( (T_w \ot 1) \Delta(T_x) + \Delta(T_w) (T_x \ot 1))
  \end{gather*}
  for all $w,x \in W$.  Then linearity of both sides of the inequality shows that for $\sum_i a_i T_{w_i}, \sum_i b_i T_{x_i} \in \CC(W, S, q)$ we have
  \begin{align*}
    \Delta \bigl ((\sum_i a_iT_{w_i}) (\sum_j b_j T_{x_j}) \bigr )
    & =
      \sum_{i,j} a_ib_j \Delta(T_{w_i} T_{x_j}) \\
    & \leq
      \sum_{i,j} a_ib_j  \vphi \bigl ( (T_{w_i} \ot 1) \Delta(T_{x_j}) + \Delta(T_{w_i})( 1 \ot T_{x_j}) \bigr ) \\
    & =
      \vphi \bigl ( (\sum_i a_i T_{w_i} \ot 1)  \Delta(\sum_j b_j T_{x_j}) + \Delta(\sum_i a_i T_{w_i}) (1 \ot \sum_j b_jT_{x_j}) \bigr )
      \eqstop
  \end{align*}
  As the tensor product norm is an unconditional cross-norm, the statement of the theorem follows with $C = \|\vphi\|$.

  For $r > 0$ denote by $\vphi_r \colon \CC(W, S, q) \ot \CC(W, S, q) \lra \CC(W, S, q) \ot \CC(W, S, q)$ the linear map such that for all $w, x \in W$ we have
  \begin{gather*}
    \vphi_r(T_w \otimes T_x)
    =
    \sum_{a,b \in \rB_r(e)} T_w T_a \otimes T_b T_x
    \eqstop
  \end{gather*}
  As left and right multiplication by generators $T_s, s \in S$ is bounded, there is a constant $D > 0$ such that $\|\vphi_r\| \leq |\rB_r(e)|^2 D$ for all $r > 0$.

  Let $w,x \in W$ be fixed elements and write
  \begin{gather*}
    T_wT_x = \sum_{u \in \mathrm{Prod(w,x)}} p_{u, w, x} T_u
    \eqstop
  \end{gather*}
  Let $P \subseteq S^*$ be a set of reduced words which is a complete set of representatives for $\mathrm{Prod}(w,x)$.  For all $u \in \mathrm{Prod}(w,x)$ let $s_1 \dotsm s_k \in P$ be the unique representative.  As $W$ is assumed hyperbolic, there is $D' > 0$ such that any geodesic from $e$ to $u$ lies at distance at most $D'$ from $s_1 \dotsm s_k$.  Hence
  \begin{gather*}
    \Delta(T_u) \leq \vphi_{D'} (\sum_{i = 0}^k T_{s_1\dotsm s_i} \otimes T_{s_{i+1} \dotsm s_k})
    \eqstop
  \end{gather*}
  It follows that
  \begin{gather*}
    \Delta(T_wT_x)
    =
    \sum_{u \in \mathrm{Prod(w,x)}} p_{u, w, x} \Delta(T_u)
    \leq
    \sum_{u \in P}
    p_{u, w, x}
    \vphi_{D'} (\sum_{\substack{u = u_1 u_2 \\ \text{ reduced decomposition}}} T_{u_1} \otimes T_{u_2})
    \eqstop
  \end{gather*}
  By Theorem~\ref{thm:skipping-cancellation-normal-form} we may choose $P$ such that every word in there can be written in the form $w_1 t_1 \dotsm t_k x_2$ for reduced decompositions $w = w_1 t_1 \dotsc t_k x_1^{-1}$ and $x = x_1 t_1 \dotsc t_k x_2$ such that the letters $t_1, \dotsc, t_k$ pairwise commute. Let $u \in P$ and consider corresponding reduced decompositions of $w$ and $x$ as above.  Let $u = u_1 u_2$ be any reduced decomposition.  If $u_1$ is an initial piece of $w_1$, denote by $w_2$ the subword of $w = w_1 t_1 \dotsc t_k x_1^{-1}$ satisfying $w = u_1w_2$.  Since $u = u_1 u_2$ and $w = u_1 w_2$ are reduced decompositions, it follows that $p_{u,w,x} = p_{u_2,w_2,x}$.  So we find
  \begin{gather*}
    p_{u,w,x} T_{u_1} \otimes T_{u_2}
    \leq
    T_{u_1} \otimes T_{w_2}(1 \ot T_x)
    \leq
    \Delta(T_w)(1 \ot T_x)
    \eqstop
  \end{gather*}
  Similarly, if $u_1$ contains $w_1 t_1 \dotsc t_k$ as an initial piece, we find that
  \begin{gather*}
    p_{u,w,x} T_{u_1} \otimes T_{u_2}
    \leq
    (T_w \ot 1)\Delta(T_x)
    \eqstop
  \end{gather*}
  Let us consider the case $u_1 = w_1 t_1 \dotsm t_i$ and $u_2 = t_{i+1} \dotsm t_k x_2$ for some $1 \leq i < k$.  Then
  \begin{align*}
    p_{u,w,x} T_{u_1} \otimes T_{u_2}
    & \leq
    p_{u,w,x} \vphi_k(T_{w_1} \ot T_{t_1 \dotsm t_k x_2}) \\
    & \leq
    \vphi_k((T_{w_1} \ot T_{t_1 \dotsm t_k x_1^{-1}})(1 \ot T_x)) \\
    & \leq
    \vphi_k(\Delta(T_w)(1 \ot T_x))
    \eqstop
  \end{align*}
  Summarising, we find that
  \begin{align*}
    \Delta(T_wT_x)
    & \leq
      \sum_{u \in P} p_{u,w,x} \vphi_{D'} \bigl (\sum_{u = u_1 u_2 \text{ reduced decomposition}} T_{u_1} \otimes T_{u_2} \bigr ) \\
    & \leq
      \vphi_{D'} \circ \vphi_k (\Delta(T_w)(1 \ot T_x) + (T_w \ot 1)(\Delta(T_x)))
      \eqstop
  \end{align*}
  As $k \leq |S|$, we can conclude the proof.
\end{proof}

\begin{corollary}
  \label{cor:hecke-schwartz-algebra-continuous-quasi-derivation}
  Let $(W, S)$ be a right-angled, hyperbolic Coxeter system and let $q \in \RR_{>0}^S$ be a deformation parameter.  Then there are Fr{\'e}chet algebra completions $\cS(W, S, q)$ and $\cS'(W, S, q)$ of $\CC(W, S, q)$ such that
  \begin{enumerate}
  \item the inclusion $\CC(W, S, q) \subseteq \Cstarred(W, S, q)$ continuously extends to an inclusion $\cS'(W, S, q) \subseteq \Cstarred(W, S, q)$, identifying $\cS'(W, S, q)$ as a smooth subalgebra of $\Cstarred(W, S, q)$;
  \item the completion $\CC(W, S, q) \subseteq \cS(W, S, q)$ is unconditional and can be identified with a subalgebra of $\cS'(W, S, q)$, and
  \item the natural quasi-derivation $\Delta\colon \CC(W, S, q) \lra \CC(W, S, q) \ot \CC(W, S, q)$ continuously extends to a map $\cS'(W, S, q) \lra \cS(W, S, q) \ot \cS(W, S,q)$, where the tensor product is the unconditional one.
  \end{enumerate}
 We thus obtain a chain of continuous inclusions of Fr\'echet algebras
 \begin{gather*}
   \cS'(W, S, q) \subseteq \cS(W, S, q) \subseteq \Cstarred(W, S, q),
 \end{gather*}
 where both $\cS'(W, S, q)$ and $\cS(W, S, q)$ are smooth. 
\end{corollary}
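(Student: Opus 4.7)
The plan is to set $\cS(W, S, q) := \ltwo_{\mathrm{RD}}(W, S, q)$ and build $\cS'(W, S, q)$ by feeding the quasi-derivation of Theorem~\ref{thm:quasi-derivation-hecke-case} into the abstract machinery of Section~\ref{sec:smooth-subalgebras-revisited}. By Corollary~\ref{cor:unconditional-differential-seminorm}, the Sobolev norms $\rD_n(a) = \|a\|_{2,n}$ form an unconditional empowered differential seminorm $\rD$ on $\CC(W, S, q)$ whose completion is the smooth subalgebra $\cS(W, S, q) \subseteq \Cstarred(W, S, q)$, which immediately establishes~(ii).

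Let $\cM := \cS(W, S, q) \ot \cS(W, S, q)$ be the unconditional Fr\'echet tensor product, equipped with the family $(\rD^\cM_n)_n$ of unconditional cross-norms induced by the Sobolev seminorms. Inside the proof of Theorem~\ref{thm:quasi-derivation-hecke-case} one obtains the pointwise estimate
\begin{gather*}
\Delta(T_w T_x) \leq \vphi\bigl( \Delta(T_w)(1 \ot T_x) + (T_w \ot 1)\Delta(T_x) \bigr)
\end{gather*}
for a bounded operator $\vphi$. Combining this with the boundedness of the left- and right-multiplication operators $\xi \mapsto (a \ot 1)\xi$ and $\xi \mapsto \xi(1 \ot b)$ on the unconditional tensor product, one verifies that $\Delta\colon \CC(W, S, q) \lra \cM$ is a quasi-derivation in the sense of Definition~\ref{def:quasi-derivation} with a constant $C$ independent of $n$.

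Now apply Proposition~\ref{prop:quasi-derivation-gives-differential-seminorm} to obtain an empowered differential seminorm $\rD'$ on $\CC(W, S, q)$ satisfying $\rD'_n(a) = \rD_n(a) + \rD^\cM_{n-1}(\Delta(a))$ for $n \geq 1$, and let $\cS'(W, S, q)$ be its completion. The same proposition supplies a continuous inclusion $\cS'(W, S, q) \hra \cS(W, S, q) \subseteq \Cstarred(W, S, q)$ extending the identity on $\CC(W, S, q)$. The bound $\|a\| \leq C_1\|a\|_{2,2} \leq C_1 \rD'_2(a)$ from Proposition~\ref{prop:inclusion-rd-algebra} supplies the hypothesis of Theorem~\ref{thm:smooth-subalgebra-differential-seminorm}, which then identifies $\cS'(W, S, q)$ as a smooth subalgebra of $\Cstarred(W, S, q)$, completing~(i). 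By construction $\rD^\cM_{n-1}(\Delta(a)) \leq \rD'_n(a)$, whence $\Delta$ is continuous from $(\CC(W, S, q), \rD')$ to $(\cM, \rD^\cM)$ and extends continuously to $\cS'(W, S, q) \lra \cM$, yielding~(iii).

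The main technical step is the middle paragraph: passing from the pointwise inequality of Theorem~\ref{thm:quasi-derivation-hecke-case} (or its immediate symmetric Blackadar--Cuntz consequence) to the $\rD_0$-asymmetric quasi-derivation bound of Definition~\ref{def:quasi-derivation}. The separation of $T_w \otimes 1$ and $1 \otimes T_x$ already visible in that pointwise bound, together with the continuity of left and right multiplication on $\cM$ with respect to any unconditional cross-norm, is what allows the scalar factor on each side to be absorbed into $\rD_0 = \|\cdot\|_2$.
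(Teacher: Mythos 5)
Your proposal follows the paper's proof essentially verbatim: take $\cS(W,S,q)=\ltwo_{\mathrm{RD}}(W,S,q)$ with the unconditional empowered differential seminorm of Corollary~\ref{cor:unconditional-differential-seminorm}, invoke Theorem~\ref{thm:quasi-derivation-hecke-case} for the quasi-derivation constant, and feed both into Proposition~\ref{prop:quasi-derivation-gives-differential-seminorm} to produce $\rD'$ and $\cS'(W,S,q)$, with continuity of $\Delta$ built into $\rD'$ by definition. Your explicit appeal to Theorem~\ref{thm:smooth-subalgebra-differential-seminorm} via the bound $\|a\|\leq C_1\|a\|_{2,2}\leq C_1\rD_2'(a)$ for the smoothness of $\cS'(W,S,q)$ is a slightly cleaner citation than the paper's, but the argument is the same.
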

\begin{proof}
  By Corollary~\ref{cor:unconditional-differential-seminorm} there is an unconditional empowered differential seminorm $\rD$ on $\CC(W, S, q)$ such that the inclusion $\CC(W, S, q) \hra \Cstarred(W, S, q)$ continuously extends to an inclusion of the completion $\cS(W, S, q)$ with respect to $\rD$.  By Theorem~\ref{thm:smooth-subalgebra-differential-seminorm}, we infer that $\cS(W, S, q) \subseteq \Cstarred(W, S, q)$ is smooth.

  By Theorem~\ref{thm:quasi-derivation-hecke-case} there is a constant $C > 0$ such that the natural quasi-derivation on $\CC(W, S, q)$ is a quasi-derivation with constant $C$ for every component of $\rD$.  Hence, by Proposition \ref{prop:quasi-derivation-gives-differential-seminorm} there is an empowered differential seminorm $\rD'$ defined by 
  \begin{gather*}
    \rD'_0(a) = C \rD_0(a)
    \eqcomma \qquad
    \rD_n'(a) = \rD_n(a) + \rD^\cM_{n-1}(\delta(a))
    \quad \text{for all } a \in \CC(W, S, q), n \geq 1
    \eqstop
  \end{gather*}
  Denoting by $\cS'(W, S, q)$ the corresponding completion, we obtain a continuous extension of the identity map to an inclusion $\cS'(W, S, q) \lra \cS(W, S, q)$.  We hence have a continuous inclusion $\cS'(W, S ,q) \subseteq \Cstarred(W, S, q)$.  Since $\cS'(W, S, q)$ is the completion with respect to an unconditional empowered differential seminorm, it becomes a smooth subalgebra of $\Cstarred(W, S, q)$ by Theorem~\ref{thm:smooth-subalgebra-differential-seminorm}.

  By definition of $\rD'$ the natural quasi-derivation continuously extends to a map $\cS'(W, S, q) \lra \cS(W, S, q) \ot \cS(W, S, q)$.
\end{proof}

\section{Traces on right-angled Hecke algebras}
\label{sec:traces-ra-hecke-algebras}

The aim of this section is to describe the space of tracial functionals on a right-angled Hecke algebra.  To this end, we recall the following approach, employed by He--Nie in \cite{henie2014}.  The vector space of tracial functionals on an algebra $\cA$ is isomorphic to the dual space of the cocentre $\cA / [\cA, \cA]$, where $[\cA, \cA]$ denotes the subspace generated by additive commutators $ab - ba$ for $a,b \in \cA$. It is shown in \cite{henie2014} that for an extended affine Hecke algebra $\cH$ associated with extended Weyl group $\tilde W$ and defined over $\ZZ[v,v^{-1}]$, any two elements of minimal length from a conjugacy class in $\tilde W$ define the same element in the cocentre and that the elements so obtained form a basis.  In the more general setup of generic Hecke algebras of arbitrary Coxeter groups, we will use Marquis' work to adapt this result.

The proof of the following proposition is a direct adaption of the one of \cite[Lemma 5.1]{henie2014}.
\begin{proposition}
  \label{prop:conjucacy-classes-to-cocentre}
  Let $\cH$ be a generic Hecke algebra of type $(W, S)$.  Let $w,w' \in W$ be elements of minimal length in the same conjugacy class of $W$.  Then $T_w + [\cH, \cH] = T_{w'} + [\cH, \cH]$.
\end{proposition}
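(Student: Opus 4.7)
The plan is to combine Marquis' structural result (Theorem~\ref{thm:cyclic-reduction-shifts-right-angled}) with a direct commutator computation showing that a single cyclic shift preserves the class in the cocentre. Since Marquis' theorem asserts that any two minimal length elements in a common conjugacy class are cyclic shifts of each other, it will suffice, by iterating the elementary cyclic shifts $s_1 \dotsm s_n \mapsto s_2 \dotsm s_n s_1$ (and its mirror), to handle the case where $w' = s_1 w s_1$ with $s_1$ an initial letter of a reduced expression of $w$.

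First I would write $w = s_1 s_2 \dotsm s_n$ as a reduced expression with $n = |w|$, and put $x = s_2 \dotsm s_n$, so that $w' = s_1 w s_1 = s_2 \dotsm s_n s_1 = x s_1$. Since $w'$ lies in the conjugacy class of $w$ and $|w|$ is minimal in this class, the expression $w = x s_1$ forces $|w'| = n$, and hence both $s_1 x$ and $x s_1$ are reduced expressions of length $n$. The multiplication rule in $\cH$ therefore gives
\begin{gather*}
  T_{s_1} T_x = T_{s_1 x} = T_w
  \quad \text{ and } \quad
  T_x T_{s_1} = T_{x s_1} = T_{w'}
  \eqstop
\end{gather*}
Subtracting, we obtain
\begin{gather*}
  T_w - T_{w'} = T_{s_1} T_x - T_x T_{s_1} = [T_{s_1}, T_x] \in [\cH, \cH]
  \eqcomma
\end{gather*}
so $T_w \equiv T_{w'} \pmod{[\cH, \cH]}$.

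To finish, I would argue by induction on the number of elementary cyclic shifts needed to pass from $w$ to $w'$. By Theorem~\ref{thm:cyclic-reduction-shifts-right-angled}, any minimal length element $w'$ in the conjugacy class of $w$ is obtained from $w$ by finitely many cyclic shifts, each of which lies again in the same conjugacy class and has the same minimal length (so the induction hypothesis applies at every intermediate step). The case of shifting a terminal letter to the front is handled symmetrically, yielding a commutator $[T_y, T_{s_n}]$.

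I do not expect a substantial obstacle here: once Theorem~\ref{thm:cyclic-reduction-shifts-right-angled} is invoked, the argument is a one-line commutator identity. The only point that deserves care is the verification that the minimality hypothesis really does propagate through cyclic shifts so that each shift $s_i \dotsm s_n s_1 \dotsm s_{i-1}$ is again represented by a reduced expression of length $n$; but this is exactly the statement that minimal length representatives are cyclically reduced, which is inherent in Marquis' theorem.
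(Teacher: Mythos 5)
There is a genuine gap, and it is one of scope rather than of computation. The proposition is stated for a generic Hecke algebra of an \emph{arbitrary} Coxeter system $(W,S)$, but the key input you invoke, Theorem~\ref{thm:cyclic-reduction-shifts-right-angled}, is a statement about \emph{right-angled} systems only, and its conclusion is simply false in general: already in $W = I_2(3) \cong S_3$ the generators $s$ and $t$ are conjugate minimal length elements of their common class, yet neither is a cyclic shift of a reduced expression of the other (each has the one-letter reduced word). In general Coxeter groups two minimal length elements of a conjugacy class are related not by rotations of reduced words but by the weaker relation of elementary \emph{tight} conjugations: $w = x^{-1}w'x$ with $|w'x| = |w'| + |x|$ or $|x^{-1}w'| = |x| + |w'|$ for some $x \in W$ that need not be a simple reflection. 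This is exactly what the paper's proof uses, citing Marquis' structure theory of conjugacy classes, and the commutator identity there is the natural generalisation of yours: $T_w = T_x^{-1}T_{xw} \equiv T_{xw}T_x^{-1} = T_{w'x}T_x^{-1} = T_{w'}$ modulo $[\cH,\cH]$, where the length-additivity hypotheses guarantee that $T_xT_w = T_{xw}$ and $T_{w'}T_x = T_{w'x}$. So to repair your argument you should replace the cyclic-shift reduction by the tight-conjugation reduction; the commutator step then goes through verbatim with $T_x$ in place of $T_{s_1}$.

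Restricted to the right-angled case, your argument is correct and complete: the single-shift computation $T_w - T_{w'} = [T_{s_1}, T_x]$ is sound, the verification that $xs_1$ is reduced via minimality is the right point to check, and the observation that every intermediate rotation again has minimal length (being conjugate to $w$ and of word length at most $n$) lets the induction close. Since the right-angled case is what the paper's main theorems ultimately require, your proof would suffice for those applications; it just does not prove the proposition as stated.
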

\begin{proof}
  Let $w, w'$ be two different elements of minimal length in the same conjugacy class of $W$.  By \cite{marquis2020-structure-conjugacy-class} there is a sequence of elementary tight conjugations that conjugate $w'$ to $w$.  It hence suffices to prove the statement for the case where $w$ is obtained from $w'$ by elementary tight conjugation (for the meaning of the latter see \cite{marquis2020-structure-conjugacy-class}).  So assume that there is an element $x \in W$ satisfying $x^{-1}w'x = w$ and $|w'x| = |w'| + |x|$ or $|x^{-1}w'| = |x| + |w'|$.  By symmetry, we may assume the former.  Then $|xw| = |xx^{-1}w'x| = |w| + |x|$ follows.

  Calculating modulo the commutator subspace, we find that
  \begin{gather*}
    T_w
    =
    T_x^{-1} T_x T_w
    =
    T_x^{-1} T_{xw}
    \equiv
    T_{xw} T_x^{-1}
    =
    T_{w'x}T_x^{-1}
    =
    T_{w'}T_xT_x^{-1}
    =
    T_{w'} \pmod{[\cH,\cH]}
    \eqstop
  \end{gather*}
  This finishes the proof.
\end{proof}

\begin{notation}
  \label{not:conjugacy-classes-cocentre}
  Let $\cH$ be a generic Hecke algebra of type $(W,S)$.  For a conjugacy class $\cO \subseteq W$ we denote by $T_\cO$ the unique element of the cocentre of $\cH$ obtained as the image of $T_w$ for a minimal length element in $\cO$.
\end{notation}

The proof of \cite[Theorem 5.3]{henie2014} applies verbatim to show the following result: the collection $(T_\cO)_{\cO}$ linearly generates the cocentre of any generic Hecke algebra.  We will use the following description of the coefficients appearing in linear combinations of these elements.
\begin{lemma}
  \label{lem:non-minimal-element-cocentre}
  Let $\cH$ be a generic Hecke algebra of type $(W,S)$.  Let $\cO \subseteq W$ be a conjugacy class and let $w \in \cO$.  Then
  \begin{gather*}
    T_w + [\cH, \cH]
    =
    \sum_\cK c_{\cK, w} T_\cK
  \end{gather*}
  for certain coefficients $c_{\cK, w}$, where $\cK$ runs over all conjugacy classes of elements that are Bruhat subordinate to $w$.
\end{lemma}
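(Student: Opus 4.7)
The plan is to induct on $|w|$. The base case is immediate: if $w$ is of minimal length in its conjugacy class $\cO$, then by Notation~\ref{not:conjugacy-classes-cocentre} (following Proposition~\ref{prop:conjucacy-classes-to-cocentre}) we have $T_w + [\cH, \cH] = T_\cO$, and $w$ itself serves as a Bruhat-subordinate representative of $\cO$, so only $\cK = \cO$ contributes.

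For the inductive step, suppose $w$ is not of minimal length. The first task is to find $s \in S$ with $|sws| < |w|$, so that $T_w$ can be rewritten modulo $[\cH,\cH]$ in terms of lower-length basis elements. In the right-angled setting emphasised in this section, Marquis' theorem (Theorem~\ref{thm:cyclic-reduction-shifts-right-angled}) guarantees that non-minimal elements admit a cyclic reduction; its first step is precisely such an $s$. Length considerations then force $|sw| = |w|-1$ and $|sws| = |sw|-1 = |w|-2$ (the symmetric case $|ws|<|w|$ is handled identically).

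Next I would carry out the Hecke multiplication. Since $|sw|<|w|$, the rule $T_s T_y = T_{sy}$ when $|sy|>|y|$ gives $T_s T_{sw} = T_w$; passing to the cocentre yields $T_w = T_s T_{sw} \equiv T_{sw} T_s \pmod{[\cH,\cH]}$. Applying the multiplication rule to $T_{sw}T_s$, using $|sws|<|sw|$, produces
\begin{gather*}
  T_{sw}T_s = a_s T_{sw} + b_s T_{sws},
\end{gather*}
hence
\begin{gather*}
  T_w \equiv a_s T_{sw} + b_s T_{sws} \pmod{[\cH, \cH]}.
\end{gather*}
Both $sw$ and $sws$ have length strictly less than $|w|$, and both satisfy $sw \leq w$ and $sws \leq w$ in the Bruhat order by standard subword-deletion properties. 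The inductive hypothesis applied to each expresses $T_{sw}$ and $T_{sws}$ modulo $[\cH,\cH]$ as combinations $\sum c\, T_\cK$ whose classes $\cK$ have representatives $\leq sw$, respectively $\leq sws$, and thus $\leq w$. Substituting delivers the required expansion of $T_w$.

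The principal obstacle is the availability of a strict cyclic reduction: if no single $s$ gave $|sws|<|w|$, the argument would require length-preserving cyclic shifts along the lines of Proposition~\ref{prop:conjucacy-classes-to-cocentre}, and such shifts may disturb Bruhat-subordination, threatening the constraint on the classes~$\cK$. In the right-angled regime this concern evaporates thanks to Marquis' strict-reduction theorem, so the single-step reduction above suffices and the induction closes; for fully general Coxeter systems one would interleave Proposition~\ref{prop:conjucacy-classes-to-cocentre}-style length-preserving moves with strict reductions, using that length-preserving cyclic shifts keep $w$ inside its own class (which trivially has representative $w \leq w$) while only strict reductions introduce new classes, whose representatives then land under the Bruhat bound via the same inductive mechanism.
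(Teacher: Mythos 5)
Your proof is correct and follows essentially the same route as the paper: induct, use Marquis' cyclic reduction to find $s$ with $|sws|=|w|-2$, move $T_s$ across the commutator subspace to obtain $T_w \equiv a_s T_{sw} + b_s T_{sws} \pmod{[\cH,\cH]}$ (the paper writes this as $T_{w'}T_s^2$ with $w'=sws$, which expands to the same two terms), and close the induction via Bruhat subordination of $sw$ and $sws$ to $w$. The only cosmetic difference is that the paper inducts on $|w|-\ell$ (distance to the minimal length in the class) rather than on $|w|$, and your closing remark about the general (non-right-angled) case mirrors the same reliance on Marquis' cyclic-reduction result that the paper's proof makes.
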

\begin{proof}
  Let $\ell$ be the minimal length of an element in $\cO$.  We prove the claim by induction on $|w| - \ell = k \in \NN_0$.  For $k = 0$ there is nothing to prove.

  If $|w| - \ell \geq 1$ and the statement is proven for all $k < |w| - \ell$, we allude to Marquis' Theorem~\ref{thm:cyclic-reduction-shifts-right-angled} and infer that $w$ is not cyclically reduced.  So there is $s \in S$ such that for $w' = sws$ we have $|w'| = |w| - 2$.  Then also $|w's| > |w'|$ and hence denoting by $(a_s)_s$ and $(b_s)_s$ the deformation parameters of $\cH$, we find that
  \begin{gather*}
    T_w
    =
    T_s T_{w'}T_s
    \equiv
    T_{w'}T_s^2
    =
    T_{w'}(a_sT_s + b_s)
    =
    a_s T_{w's} + b_sT_{w'}
    \pmod{[\cH,\cH]}
    \eqstop
  \end{gather*}
  So the induction hypothesis applies to $T_{w's}$ and $T_{w'}$.  As both $w'$ and $w's$ are Bruhat subordinate to $w$, this finishes the proof.
\end{proof}

For the following lemma we have to specialise to the case of right-angled Coxeter systems.
\begin{lemma}
  \label{lem:bruhat-order-conjugacy-classes}
  Let $(W,S)$ be a right-angled Coxeter system and define a relation on conjugacy classes of $W$ by
  \begin{gather*}
    \cO \leq \cK
    \quad \text{ if and only if } \quad
    \exists \, g \in \cO \,\exists \, \text{minimal length element } k \in \cK\colon g \text{ is Bruhat subordinate to } k
    \eqstop
  \end{gather*}
  Then $\leq$ is a partial order.
\end{lemma}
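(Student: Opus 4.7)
Reflexivity is immediate: any minimal length $g = k \in \cO$ witnesses $\cO \leq \cO$. For antisymmetry, I will argue by length: $\cO \leq \cK$ forces $\min_{g \in \cO} |g| \leq \min_{k \in \cK} |k|$, since in any witness one has $\min_{g \in \cO} |g| \leq |g| \leq |k| = \min_{k \in \cK} |k|$. Combining this with $\cK \leq \cO$ yields equality of the two minimal lengths, so the witness $g \leq k$ must itself satisfy $|g| = |k|$; Bruhat equality at equal lengths then forces $g = k$, and hence $\cO = \cK$. Transitivity is the substantive part, which I will address in two stages.

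The first stage is a normalisation: $\cO \leq \cK$ admits a witness $(g, k)$ in which $g$ is itself of minimal length in $\cO$. Given any witness $g \leq k$, Theorem~\ref{thm:cyclic-reduction-shifts-right-angled} provides a cyclic reduction of $g$ to some minimal $g_0 \in \cO$ through steps $g^{(i+1)} = s^{(i)} g^{(i)} s^{(i)}$ with $|g^{(i+1)}| = |g^{(i)}| - 2$, each of which exhibits $g^{(i+1)}$ as a reduced subexpression of $g^{(i)}$, so $g_0 \leq g \leq k$ in Bruhat order. The second and central stage is the one-step transfer lemma: if $g \in \cO$ and $k \in \cK$ are both of minimal length with $g \leq k$, and $k' = s k s$ for some $s \in S$ that is an initial letter of $k$, then there exists a minimal $g' \in \cO$ with $g' \leq k'$. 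Note first that $|k'| = |k|$, since cyclic reducedness of $k$ together with Proposition~\ref{prop:racs-cancellation} precludes $s$ from being simultaneously initial and terminal of $k$. Writing $k = s r$ reduced (so $k' = r s$), any reduced subexpression of $k$ realising $g$ has one of two shapes: either the initial $s$ is not used, in which case $g \leq r \leq rs = k'$ and one takes $g' = g$; or $g = s g_0$ reduced with $g_0 \leq r$, in which case $s$ is an initial letter of $g$, and cyclic reducedness of $g$---again via Proposition~\ref{prop:racs-cancellation}---prevents $s$ from being a terminal letter of $g$, so $s$ is not terminal of $g_0$ either. Then $g' := g_0 s = sgs$ is reduced of length $|g|$, hence minimal in $\cO$, and the standard right-multiplication monotonicity of Bruhat order (applicable since $g_0 s > g_0$ and $rs > r$) yields $g' = g_0 s \leq rs = k'$.

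The case of $s$ a terminal letter of $k$ follows by applying the same argument to inverses. By Theorem~\ref{thm:cyclic-reduction-shifts-right-angled}, any two minimal length elements of $\cK$ are connected by a chain of one-step cyclic shifts, so iterating the transfer lemma carries any witness $(g, k)$ for $\cO \leq \cK$ into a witness $(g^*, k^*)$ with $k^*$ any prescribed minimal length element of $\cK$. Transitivity now falls out at once: given witnesses $(g, k)$ for $\cO \leq \cK$ and $(\tilde k, l)$ for $\cK \leq \cL$, transport the first to align with $\tilde k$, producing $(g^*, \tilde k)$, and then chain $g^* \leq \tilde k \leq l$ by Bruhat transitivity to obtain a witness for $\cO \leq \cL$. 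The hard part of the whole program---and the step where the right-angled assumption enters essentially, via Proposition~\ref{prop:racs-cancellation}---is the length preservation $|g'| = |g|$ in the transfer lemma.
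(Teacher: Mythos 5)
Your overall strategy is sound and close in spirit to the paper's own proof (which likewise reduces transitivity to transporting Bruhat subordination along cyclic shifts between minimal length elements), and your one-step transfer lemma is a nice way of making that transport precise. But the final paragraph has a genuine gap: in the witness $(\tilde k, l)$ for $\cK \leq \cL$, the element $\tilde k \in \cK$ is \emph{not} required to be of minimal length in $\cK$ --- the definition of $\leq$ only demands minimality of the dominating element $l \in \cL$. Your transfer lemma, and the chain of one-step cyclic shifts supplied by Theorem~\ref{thm:cyclic-reduction-shifts-right-angled}, only move between \emph{minimal length} elements of $\cK$, so ``transport the first witness to align with $\tilde k$'' is simply not available when $\tilde k$ is not minimal. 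The repair is short and uses a tool you already built: apply your Stage~1 normalisation to the witness $(\tilde k, l)$ to produce a minimal length element $\tilde k_0 \in \cK$ with $\tilde k_0 \leq \tilde k \leq l$, then transport $(g,k)$ to $(g^*, \tilde k_0)$ and conclude $g^* \leq \tilde k_0 \leq l$. This is essentially what the paper does: it cyclically reduces $k' = \tilde k$ to a minimal length element $k_1$ with $k_1 \leq k'$, and then moves $g$ along the cyclic shift connecting $k$ to $k_1$.

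A secondary point: the justification ``cyclic reducedness of $k$ together with Proposition~\ref{prop:racs-cancellation} precludes $s$ from being simultaneously initial and terminal of $k$'' is false as stated --- if $s$ commutes with every letter of $k$ (for instance $k = st$ with $m_{st} = 2$), then $s$ is both an initial and a terminal letter of the cyclically reduced element $k$, and $sks = k$. The same objection applies to the corresponding claim for $g$. The conclusions you actually need, namely $|sks| = |k|$ and $|g_0 s| = |g_0| + 1$, are nevertheless correct and follow more directly from minimality: $|sks| \leq |k|$ because $s$ is an initial letter of $k$, while $|sks| \geq |k|$ because $sks$ is conjugate to the minimal length element $k$; likewise $g_0 s = sgs$ is conjugate to the minimal length element $g$ and has length at most $|g_0| + 1 = |g|$, forcing equality. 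With these two repairs your argument goes through; the reflexivity and antisymmetry parts, and the Bruhat lifting step $g_0 s \leq rs$, are fine as written.
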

\begin{proof}
  It is clear that $\leq$ is reflexive.  We have to show that it is antisymmetric and transitive.  Assume that $\cO \leq \cK$ and $\cK \leq \cO$.  Let $g_1, g_2 \in \cO$, $k_1, k_2 \in \cK$ be such that $g_2$ and $k_1$ are of minimal length in $\cO$ and $\cK$, respectively, and the Bruhat subordination relations $g_1 \leq k_1$ and $k_2 \leq g_2$ hold.  Then
  \begin{gather*}
    |g_1| \leq |k_1| \leq |k_2| \leq |g_2| \leq |g_1|
  \end{gather*}
  showing that $|g_1| = |k_1|$.  Since $g_1$ is Bruhat subordinate to $k_1$, we conclude that $g_1 = k_1$ and hence $\cO = \cK$ holds.

  Assume now that $\cO \leq \cK \leq \cL$.  Let $g \in \cO$, $k,k' \in \cK$, $l \in \cL$ such that $k$ and $l$ have minimal length in $\cK$ and $\cL$, respectively, and $g \leq k$ as well as $k' \leq l$ holds.  Since $(W, S)$ is right-angled, Marquis' Theorem~\ref{thm:cyclic-reduction-shifts-right-angled} says that there exists $n \in \NN_0$ and   sequences of elements $ k_1, \dotsc, k_n$ and $s_1, \dotsc, s_{n-1}$ such that $k_n = k'$, $k_i = s_i k_{i+1}s_i$ with $|k_i| < |k_{i+1}|$ for all $i \in \{1, \dots, n-1\}$ and $k$ is a cyclic shift of $k_1$.  In particular, $k_1 \leq k'$.  Since $k$ is a cyclic shift of $k_1$ and $g \leq k$, there is a cyclic shift $g_1$ of $g$ such that $g_1 \leq k_1$.  It follows now that $g_1 \leq k_1 \leq k' \leq l$.  Since $g_1 \in \cO$, this shows $\cO \leq \cL$ and finishes the proof.
\end{proof}

\begin{remark}
  \label{rem:bruhat-order-conjugacy-classes-minimal-length}
  We remark that every element in a Coxeter system $(W, S)$ Bruhat dominates a minimal length element from its conjugacy class, since it can be cyclically reduced to such by Marquis' result \cite[Theorem A]{marquis2021-cyclically-reduced}.  As a consequences, in the context of Lemma~\ref{lem:bruhat-order-conjugacy-classes} we also have that
  \begin{gather*}
    \cO \leq \cK
    \quad \text{ if and only if } \quad
    \exists g \in \cO, k \in \cK \text{minimal length elements} \colon g \text{ Bruhat subordinate to } k
    \eqstop
  \end{gather*}
\end{remark}

\begin{theorem}
  \label{thm:right-angled-cocentre-basis}
  Let $\cH$ be a generic Hecke algebra of right-angled type $(W,S)$ over an algebraically closed field $K$ of characteristic different from $2$. Let $(a_s)_{s\in S}$, $(b_s)_{s\in S}$ be the deformation parameters of $\cH$ and assume that $b_s \neq -\frac{a_s^2}{4}$ for each $s \in S$.  Then $(T_{\cO})_{\cO}$ where $\cO$ runs through the conjugacy classes of $W$ is a basis of the cocentre of $\cH$.
\end{theorem}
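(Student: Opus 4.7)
The plan is to reduce the theorem to the known basis of the cocentre of the group algebra $KW$ via the isomorphism of Proposition~\ref{prop:isomorphism-ra-hecke-algebra}, and then convert the transported basis into $(T_\cO)_\cO$ by a triangular change-of-basis argument. Under the hypotheses of the theorem, $K$ is an algebraically closed field of characteristic different from $2$ and each $b_s + a_s^2/4$ lies in $K^\times$ and admits a square root in $K$, so Proposition~\ref{prop:isomorphism-ra-hecke-algebra} provides an algebra isomorphism $\vphi\colon KW \to \cH$, inducing an isomorphism $\bar\vphi$ on cocentres. The cocentre of $KW$ carries the standard basis indexed by conjugacy classes: the linear map $\rho\colon KW \to K^{(\mathrm{conj}(W))}$ sending $u_w$ to the basis vector $e_{\cO_w}$ has kernel exactly $[KW, KW]$, since every generator $u_gu_h - u_hu_g = u_{gh} - u_{hg}$ maps to zero (as $gh$ and $hg$ are conjugate) and, conversely, each difference $u_{gwg^{-1}} - u_w$ lies in $[KW, KW]$ (being equal to $u_g u_{wg^{-1}} - u_{wg^{-1}} u_g$). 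Composing, I obtain an isomorphism $\beta\colon K^{(\mathrm{conj}(W))} \to \cH/[\cH, \cH]$ determined by $\beta(e_\cO) = [\vphi(u_w)]$ for any $w \in \cO$.

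Fix a conjugacy class $\cO$ with minimal-length representative $w$. By Proposition~\ref{prop:isomorphism-ra-hecke-algebra}, $\vphi(u_w) = \sum_{w' \leq w} c_{w', w} T_{w'}$ with $c_{w, w} \in K^\times$. Expanding each $[T_{w'}]$ via Lemma~\ref{lem:non-minimal-element-cocentre}, and noting that $[T_w] = T_\cO$ because $w$ is of minimal length in $\cO$, I arrive at
\begin{gather*}
\beta(e_\cO) \;=\; c_{w, w}\, T_\cO \;+\; \sum_{\cK < \cO} d_{\cK, \cO}\, T_\cK,
\end{gather*}
where the partial order on conjugacy classes is as in Lemma~\ref{lem:bruhat-order-conjugacy-classes}. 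Indeed, every $\cK$ occurring in the expansion contains an element Bruhat-subordinate to $w$, so by the definition of the partial order (with $w$ of minimal length in $\cO$) one has $\cK \leq \cO$; and a term with $\cK = \cO$ can only come from $[T_w]$ itself, because for $w' < w$ one has $|w'| < |w|$, so no element of $\cO$ can be Bruhat-subordinate to $w'$. Finiteness of the Bruhat interval below $w$ ensures the sum is finite.

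Because the partial order restricted to the finite set $\{\cK : \cK \leq \cO\}$ is well-founded, a finite induction inverts the triangular relation to give $T_\cO = c_{w,w}^{-1}\,\beta(e_\cO) + \sum_{\cK < \cO} f_{\cK, \cO}\, \beta(e_\cK)$. Since $(\beta(e_\cK))_\cK$ is a basis of $\cH/[\cH, \cH]$, Lemma~\ref{lem:order-preserving-base-change} applied to these triangular expressions gives the linear independence of $(T_\cO)_\cO$. Spanning is immediate from Lemma~\ref{lem:non-minimal-element-cocentre} together with the fact that $(T_w)_{w \in W}$ spans $\cH$. The main delicate point will be establishing the triangular shape of $\beta(e_\cO)$, which requires carefully tracking which conjugacy classes can arise from Bruhat-subordinate elements of $w$ and relating Bruhat order to the partial order on conjugacy classes via the choice of minimal-length representatives.
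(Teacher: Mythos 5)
Your proposal is correct and follows essentially the same route as the paper: transport the standard conjugacy-class basis of the cocentre of $KW$ through the isomorphism of Proposition~\ref{prop:isomorphism-ra-hecke-algebra}, establish the triangular expansion $\psi(u_\cO) = c_{w,w}\,T_\cO + \sum_{\cK < \cO} d_{\cK,\cO}\,T_\cK$ via Lemma~\ref{lem:non-minimal-element-cocentre} and the partial order of Lemma~\ref{lem:bruhat-order-conjugacy-classes}, and conclude with Lemma~\ref{lem:order-preserving-base-change}. Your explicit inversion of the triangular system (using finiteness of the down-sets $\{\cK : \cK \leq \cO\}$) is a welcome clarification, since the paper invokes Lemma~\ref{lem:order-preserving-base-change} in the converse direction to how it is stated, but the substance of the argument is identical.
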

\begin{proof}
  Using the assumptions on $K$, Proposition~\ref{prop:isomorphism-ra-hecke-algebra} applies to show that there is an isomorphism of unital $K$-algebras $\vphi\colon KW \lra \cH$ satisfying $\vphi(u_s) = c_sT_s + d_s$ for certain $c_s \in K^\times$ and $d_s \in K$.  Then $\vphi$ induces an isomorphism of cocentres $\psi\colon KW/[KW,KW] \lra \cH/[\cH, \cH]$.  For a conjugacy class $\cO$ in $W$ and an element of minimal length $w \in \cO$, we denote by $u_{\cO}$ the image of $u_w$ in the cocentre of $KW$.  Using Proposition~\ref{prop:isomorphism-ra-hecke-algebra}, we then find
  \begin{gather*}
    \psi(u_{\cO})
    =
    \psi(u_w + [KW,KW])
    =
    \vphi(u_w) + [\cH,\cH]
    =
    \sum_{w' \leq w} c_{w',w} T_w' + [\cH, \cH]
  \end{gather*}
  for certain coefficients $c_{w',w} \in K$ with $c_{w,w} = c_{s_1} \dotsm c_{s_n} \in K^\times$ for any reduced word $s_1\dotsm s_n$ representing $w$.  By Lemma~\ref{lem:non-minimal-element-cocentre}, we can now write
  \begin{gather*}
    \psi(u_\cO)
    =
    \sum_{\cK \leq \cO} c_{\cK,\cO} T_\cK
  \end{gather*}
  for certain coefficients $c_{\cK,\cO}$ with $c_{\cO, \cO} = c_{w,w} \in K^\times$.  Since the family $(\psi(u_{\cO}))_{\cO}$ is linearly independent, by Lemma~\ref{lem:order-preserving-base-change} also $(T_\cO)_\cO$ is linearly independent.  Combined with Lemma~\ref{lem:non-minimal-element-cocentre} we conclude that $(T_\cO)_\cO$ is a basis of the cocentre of $\cH$.
\end{proof}

We will now give a description of the dual functionals associated with the basis $(T_\cO)_\cO$ of the cocentre of $\cH$.  To be more precise, we consider the associated traces on $\cH$.  The description given here will be suitable for our purpose to extend these traces to Schwartz algebra completions of Hecke algebras considered in Section~\ref{sec:smooth-subalgebras-hecke}.

\begin{notation}
  \label{not:cyclic-reduction-map}
  Fix a total order $\leq_S$ on $S$ and define a linear map $\Phi_0\colon \cH \lra \cH$ by its evaluation on $T_w$, $w \in W$ being
  \begin{gather*}
    \Phi_0(T_w)
    =
    \begin{cases}
      T_w & \text{ if $w$ is of minimal length in its conjugacy class} \\
      T_{sws}T_s^2 & \text{ if $s \in S$ is $\leq_S$-minimal with $|sws| < |w|$}\eqstop
    \end{cases}
  \end{gather*}
  Define further $\Phi(T_w) = \lim_n \Phi_0^{\circ n}(T_w)$ for $w \in W$.
\end{notation}

\begin{remark}
  \label{rem:cyclic-reduction-map-basics}
  We observe that $\Phi$ is well-defined, since for every $w \in W$ either $\Phi_0(T_w) = T_w$ holds or the two elements in the support of $\Phi_0(T_w)$ have length $|w| - 1$ and $|w| - 2$.  Hence, $\Phi^{\circ k}(T_w) = \Phi^{\circ |w|}(T_w)$ for all $w \in W$ and all $k \geq |w|$.  By Marquis' Theorem~\ref{thm:cyclic-reduction-shifts-right-angled}, every element in $W$ is either minimal in its conjugacy class or can be cyclically reduced.  Hence
  \begin{gather*}
    \im \Phi = \lspan \{ T_w \mid w \text{ has minimal length in its conjugacy class}\}
    \eqstop
  \end{gather*}
\end{remark}

\begin{lemma}
  \label{lem:cyclic-reduction-order-irrelevant}
  Let $(W,S)$ be a right-angled Coxeter system and $\Phi$ be the map on a generic Hecke algebra of type $(W,S)$ defined in Notation~\ref{not:cyclic-reduction-map}.  Let $w \in W$ and assume that $w = s w' s$ is a reduced expression for some $w' \in W$, $s \in S$.  Then $\Phi(T_w) = \Phi(T_{w'} T_s^2)$.
\end{lemma}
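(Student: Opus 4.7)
The plan is to argue by induction on $|w|$. The case $|w| \leq 1$ is vacuous, since no generator satisfies $w = sws$ reduced there. For the inductive step, I would let $t \in S$ be the $\leq_S$-minimal generator for which $|twt| < |w|$; such a $t$ exists because $s$ itself has this property. When $s = t$, the definition of $\Phi_0$ gives $\Phi_0(T_w) = T_{w'}T_s^2$, so $\Phi(T_w) = \Phi(\Phi_0(T_w)) = \Phi(T_{w'}T_s^2)$ immediately, and the essential remaining case is $s \neq t$.

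The crucial combinatorial input is that both $s$ and $t$ belong to the intersection of the left and right descent sets of $w$. In a right-angled Coxeter system, distinct left (and analogously right) descents necessarily commute: two reduced expressions for $w$ beginning with different letters are related by Tits' word-problem solution only through commutation moves, forcing $[s,t] = e$. A short case analysis for $|w| \in \{2,3\}$ rules out the coexistence of two distinct valid cyclic reducers in those lengths, so I may assume $|w| \geq 4$. Exploiting $[s,t]=e$ together with Tits' solution, I extract a reduced decomposition
\begin{gather*}
  w \;=\; s\,t\,y\,t\,s \qquad \text{with } y \in W,\ |y| = |w| - 4.
\end{gather*}
Using the identities $tst = s$ and $sts = t$ that follow from $[s,t]=e$, direct computation then yields the group-theoretic equalities $sws = tyt$, $twt = sys$, $sw = tyts$, $tw = syts$, each as a reduced expression.

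Expanding $T_{w'}T_s^2$ and $T_{w''}T_t^2$ in the Hecke basis, the lemma reduces to verifying
\begin{gather*}
  a_s\,\Phi(T_{sw}) + b_s\,\Phi(T_{sws}) \;=\; a_t\,\Phi(T_{tw}) + b_t\,\Phi(T_{twt})
\end{gather*}
together with the definitional identity $\Phi(T_w) = \Phi(T_{w''}T_t^2)$. Each term on either side involves a group element of length strictly less than $|w|$, so the induction hypothesis applies. Reducing $tyts$ and $tyt$ using $t$, and $syts$ and $sys$ using $s$, I obtain
\begin{align*}
  \Phi(T_{sw}) &= a_t \Phi(T_{yts}) + b_t \Phi(T_{ys}), & \Phi(T_{sws}) &= a_t \Phi(T_{yt}) + b_t \Phi(T_y), \\
  \Phi(T_{tw}) &= a_s \Phi(T_{yts}) + b_s \Phi(T_{yt}), & \Phi(T_{twt}) &= a_s \Phi(T_{ys}) + b_s \Phi(T_y).
\end{align*}
Substituting into the desired equality, both sides expand into the same expression $a_sa_t\Phi(T_{yts}) + a_sb_t\Phi(T_{ys}) + a_tb_s\Phi(T_{yt}) + b_sb_t\Phi(T_y)$, which completes the induction.

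The main obstacle I anticipate is the combinatorial setup: extracting the reduced decomposition $w = styts$ and checking that the intermediate subwords $tyts$, $syts$, $tyt$, $sys$ are reduced and indeed admit $t$ or $s$ as a valid cyclic reducer. Both rely on Tits' solution to the word problem and on Proposition~\ref{prop:racs-cancellation}, but deserve careful bookkeeping. Once the combinatorics is in place, the algebraic verification is a symmetric check that the same four monomials in $a_s, a_t, b_s, b_t$ appear on each side with matching coefficients.
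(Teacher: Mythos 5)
Your proposal is correct and follows essentially the same route as the paper: induction on $|w|$, comparison of $s$ with the $\leq_S$-minimal reducer $t$, the observation that distinct descents commute in the right-angled case (the paper gets $[s,t]=e$ from Proposition~\ref{prop:racs-cancellation} applied to the two terminal letters), and an application of the induction hypothesis to shorter elements. The only cosmetic difference is that you expand everything down to the core element $y$ in $w = styts$ and verify the symmetric four-term identity in $a_s,a_t,b_s,b_t$, whereas the paper applies the induction hypothesis one level higher to $T_{w''}$ and $T_{w''t}$ and manipulates $T_s^2T_t^2$ directly; the underlying computation is the same.
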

\begin{proof}
  We prove the statement by induction on the length of $w$, which must be at least three.  If $|w| = 3$, then $w = sts$ for some $w' = t \in S$ not commuting with $s$.  Let $s' \in S$ be $\leq_S$-minimal with the property that $|s'ws'| \leq |w|$.  Then there is $t' \in S$ such that $w = s't's'$ and it follows that $t' = s' s t s s'$.  As $(W, S)$ is right-angled, this implies $t' = t$ and a fortiori $s' = s$.  So we obtain
  \begin{gather*}
    \Phi_0(T_w)
    =
    T_t T_s^2
    =
    a_s T_t + b_s T_{ts}
    \eqstop
  \end{gather*}
  Since $t$ and $ts$ are minimal length elements in their conjugacy classes, it follows that $\Phi(T_w) = a_s T_t + b_s T_{ts} = \Phi(T_{w'} T_s^2)$.
 
  Assume now that $|w| > 3$.  By assumption $w$ cannot be minimal in its conjugacy class, so there is $t \in S$ such that $w = tw''t$ is reduced and $\Phi_0(T_w) = T_{w''}T_t^2$.   If $t = s$, we finish the proof as above.  Otherwise, we observe that since $w$ ends both with $t$ and $s$, we have $[t,s] = e$.  It follows that $w''$ and $w''t$ end with $s$.  So inductively, we find
  \begin{gather*}
    \Phi(T_{w''}) = \Phi(T_{w'''}T_s^2)
    \quad \text{ and } \quad
    \Phi(T_{w''t}) = \Phi(T_{w'''t}T_s^2)
  \end{gather*}
  for $w''' = sw''s$.  This allows us to conclude by the calculation
  \begin{align*}
    \Phi(T_w)
    & =
      \Phi(T_{w''} T_t^2) \\
    & =
      \Phi(a_t T_{w''} + b_t T_{w''t}) \\
    & =
      \Phi(a_t T_{w'''}T_s^2 + b_t T_{w'''t}T_s^2) \\
    & =
      \Phi(T_{w'''}T_s^2T_t^2) \\
    & =
      \Phi(T_{w'}T_s^2)
      \eqcomma
  \end{align*}
  where the induction hypothesis justifies the third and last equalities.  
\end{proof}

\begin{notation}
  \label{not:delocalised-trace}
  Let $(W, S)$ be a Coxeter system of right-angled type and let $\cH$ be a generic Hecke algebra of type $(W, S)$ defined over $R$.  For a conjugacy class $\cO \subseteq W$, we denote by $\Sigma_\cO\colon \cH \lra R$ the sum over all coefficients of minimal length elements in $\cO$, that is
  \begin{gather*}
    \Sigma_\cO(T_w)
    =
    \begin{cases}
      1 & \text{ if $w \in \cO$ and $w$ is of minimal length in $\cO$} \\
      0 & \text{ otherwise}\eqstop
    \end{cases}
  \end{gather*}
  Combining this with Notation~\ref{not:cyclic-reduction-map}, we will write $\vphi_\cO = \Sigma_\cO \circ \Phi$.
\end{notation}

\begin{theorem}
  \label{thm:delocalised-trace-identification}
  Let $\cH$ be a generic Hecke algebra of right-angled type $(W,S)$.  For every conjugacy class $\cO \subseteq W$, the map $\vphi_\cO$ is a trace on $\cH$.  If $(T_{\cU})_{\cU}$ denotes the basis of the cocentre of $\cH$ satisfying $T_{\cU} = T_w + [\cH, \cH]$ for every element of minimal length $w \in \cU$, then $\vphi_\cO(T_\cU) = \delta_{\cO, \cU}$. Moreover, the assignment $(c_\cO)_{\cO} \mapsto \sum_\cO c_\cO \vphi_\cO$ defines an isomorphism between the space of complex sequences on the set of conjugacy classes of $W$ and the space of tracial functionals on $\cH$.  
\end{theorem}
\begin{proof}
  In order to show that $\vphi_\cO$ is a trace, it suffices to show that for all $w \in W$ and all $s \in S$ the equality $\vphi_\cO(T_s T_w) = \vphi_\cO(T_w T_s)$ holds.  We prove this statement by induction on $|w| \in \NN_0$ and observe that the case $|w| = 0$ is clear.  We distinguish several cases.

  If $|sw| = |ws| > |w|$, then we have $T_sT_w = T_{sw}$ and $T_wT_s = T_{ws}$.  Observe that $sw$ is of minimal length in its conjugacy class if and only if so is $ws$.  In this case we find that
  \begin{gather*}
    \vphi_\cO(T_s T_w)
    =
    \Sigma_\cO (T_{sw})
    =
    \Sigma_\cO (T_{ws})
    =
    \vphi_\cO(T_wT_s)
    \eqstop
  \end{gather*}
  If $sw$ is not of minimal length in its conjugacy class, let $t \in S$ such that $|tswt| < |sw|$.  It follows that $[s,t] = e$ and hence also $|twst| < |ws|$ and $|twt| < |w|$.  Writing $w' = twt$, by Lemma~\ref{lem:cyclic-reduction-order-irrelevant} applied to the reduced expression $t(sw')t$ we infer from the induction hypothesis that
  \begin{gather*}
    \vphi_\cO(T_sT_w)
    =
    \vphi_\cO(T_{sw'} T_t^2)
    =
    \vphi_\cO(a_t T_{sw'} + b_t T_{sw't})
    =
    \vphi_\cO(a_t T_{w's} + b_t T_{w'ts})
    =
    \vphi_\cO(T_wT_s)
    \eqstop
  \end{gather*}

  In case $|sw| > |w| > |ws|$, we write $w = w's$ with $|w| = |w'| + 1$.  Then $w'$ does neither start nor end with $s$, so that
  \begin{gather*}
    T_sT_w = T_{sw's}
    \quad \text{ and } \quad
    T_w T_s = T_{w'}T_s^2
    \eqstop
  \end{gather*}
  Lemma~\ref{lem:cyclic-reduction-order-irrelevant} applies and shows that
  \begin{gather*}
    \vphi_\cO(T_s T_w)
    =
    \Sigma_\cO \circ \Phi(T_{sw's})
    =
    \Sigma_\cO \circ \Phi(T_{w'}T_s^2)
    =
    \vphi_\cO(T_wT_s)
    \eqstop
  \end{gather*}

  In case $|ws| > |w| > |sw|$, we argue similarly.

  We consider the final case $|sw| = |ws| < |w|$. Write $w = sw's$ with $|w| = |w'| + 2$.  We have
  \begin{gather*}
    T_w T_s = a_s T_{sw'} + b_sT_w
    \quad \text{ and } \quad
    T_s T_w = a_s T_{w's} + b_sT_w
    \eqstop
  \end{gather*}
  Since $|sw'| = |w's| > |w'|$, the first case applies to show that
  \begin{gather*}
    \vphi_\cO(T_sT_w)
    =
    \vphi_\cO(a_S T_{w's} + b_sT_w)
    =
    \vphi_\cO(a_s T_{sw'} + b_sT_w)
    =
    \vphi_\cO(T_wT_s)
    \eqstop
  \end{gather*}

  We have now shown that $\vphi_\cO$ is a trace and hence factors through the cocentre of $\cH$.  Let $\cU$ be a conjugacy class in $W$ and let $w \in \cU$ be an element of minimal length.  Then
  \begin{gather*}
    \vphi_\cO(T_\cU) = \Sigma_\cO \circ \Phi(T_w)  = \Sigma_\cO (T_w) = \delta_{\cO, \cU}
    \eqstop
  \end{gather*}
  
  The last statement follows from Theorem~\ref{thm:right-angled-cocentre-basis} and the identification of the dual space $(\cc(I))^* \cong \linfty(I)$ for any set $I$.
\end{proof}

Let us finish this section by providing a proof of the simplified version of Theorem~\ref{thm:delocalised-trace-identification} stated in the introduction.
\begin{proof}[Proof of Theorem~\ref{thmintro:delocalised-trace-identification-simplified}]
Existence of the trace $\vphi_\cO$ is an immediate consequence of Theorem~\ref{thm:delocalised-trace-identification}. Uniqueness follows from the fact that $(T_\cU)_\cU$ is a basis of the cocentre of $\cH$ by Theorem~\ref{thm:right-angled-cocentre-basis}. The second statement of the theorem is part of Theorem~\ref{thm:delocalised-trace-identification}.
\end{proof}

\section{Traces on Hecke-Schwartz algebras of right-angled, hyperbolic type and K-theory pairing}
\label{sec:traces-schwartz}

We are now ready to prove that delocalised traces extend to Hecke-Schwartz algebras, combining results about traces on right-angled Hecke algebras from Section~\ref{sec:traces-ra-hecke-algebras} and our study of Schwartz algebra completions of Iwahori-Hecke algebras in Section~\ref{sec:smooth-subalgebras-hecke}.  We retain the notation from these sections, in particular Notation~\ref{not:cyclic-reduction-map} and \ref{not:delocalised-trace}.

The next lemma gives an alternative perspective on the map $\Phi$ introduced in Notation~\ref{not:cyclic-reduction-map}, which cyclically reduces elements in Hecke algebras.  It will allow us to prove boundedness of $\Phi$ when extended to suitable Schwartz algebras.
\begin{lemma}
  \label{lem:cyclic-reduction-hecke-algebra}
  Let $(W, S)$ be a right-angled Coxeter group and $\cH$ a generic Hecke algebra of type $(W,S)$.  There are uniquely defined maps $\gamma, \rho \colon W \lra W$ such that for every $w \in W$ we have
  \begin{enumerate}
  \item $\rho(w)$ is a minimal length element in the conjugacy class of $w$;
  \item $\gamma(w) = e$ if $w$ is of minimal length in its conjugacy class and otherwise $\gamma(w) = s_1 \dotsm s_k$ where for all $1 \leq i \leq k$ the $\leq_S$-minimal element $s \in S$ satisfying $|s s_i \dotsm s_k \rho(w) s_k \dotsm s_i s| < |s_i \dotsm s_k \rho(w) s_k \dotsm s_i|$ is $s_i$;
  \item $w = \gamma(w) \rho(w) \gamma(w)^{-1}$ is a reduced decomposition.
  \end{enumerate}
  Furthermore, $\Phi\colon \cH \lra \cH$ is the unique $R$-linear map satisfying
  \begin{gather*}
    \Phi(T_w) = T_{\gamma(w)^{-1}} T_{\gamma(w)} T_{\rho(w)}
    \quad
    \text{ for all } w \in W
    \eqstop
  \end{gather*}
\end{lemma}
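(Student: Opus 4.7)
The plan is to construct $\gamma$ and $\rho$ by a recursion that mirrors the definition of $\Phi_0$, and then identify the closed form of $\Phi(T_w)$ by induction on $|w|$, using Lemma~\ref{lem:cyclic-reduction-order-irrelevant} as the key algebraic input at the inductive step.

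First I would define $\gamma, \rho$ recursively: if $w$ is of minimal length in its conjugacy class, set $\gamma(w)=e$ and $\rho(w)=w$; otherwise, Theorem~\ref{thm:cyclic-reduction-shifts-right-angled} guarantees the existence of some $s\in S$ with $|sws|<|w|$, so let $s_1$ be the $\leq_S$-minimal such letter, set $w^{(1)}:=s_1 w s_1$, and define $\rho(w):=\rho(w^{(1)})$ and $\gamma(w):=s_1\gamma(w^{(1)})$. Property (i) then follows immediately because $w$ and $w^{(1)}$ are conjugate; property (ii) is just the unwinding of the recursion together with the $\leq_S$-minimal choice made at each step; and property (iii) follows from a length count, because by induction $|w^{(1)}|=2|\gamma(w^{(1)})|+|\rho(w^{(1)})|$ and $|w|=|w^{(1)}|+2$, so the concatenation $w=s_1\gamma(w^{(1)})\rho(w^{(1)})\gamma(w^{(1)})^{-1}s_1$ has length exactly $|w|$ and is therefore reduced. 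Uniqueness of the pair $(\gamma,\rho)$ is forced by the fact that the $\leq_S$-minimal reducing letter is uniquely determined at every stage of the recursion.

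With $(\gamma,\rho)$ in hand, I would establish the identity $\Phi(T_w)=T_{\gamma(w)^{-1}}T_{\gamma(w)}T_{\rho(w)}$ by induction on $|w|$. The base case is immediate, since $\gamma(w)=e$ gives $T_{\gamma(w)^{-1}}T_{\gamma(w)}T_{\rho(w)}=T_w=\Phi(T_w)$. For the inductive step, I would write $w=s_1 w^{(1)}s_1$ reduced and apply Lemma~\ref{lem:cyclic-reduction-order-irrelevant} to obtain $\Phi(T_w)=\Phi(T_{w^{(1)}}T_{s_1}^2)$. Expanding $T_{s_1}^2=a_{s_1}T_{s_1}+b_{s_1}$ by linearity of $\Phi$, and noting that $w^{(1)}$ does not end with $s_1$ (otherwise $s_1 w^{(1)} s_1$ would not be reduced), one controls the cancellation and reduces the computation to the inductive hypothesis applied to $w^{(1)}$. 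The resulting expression is then reassembled using the reduced factorisations $T_{\gamma(w)^{-1}}=T_{\gamma(w^{(1)})^{-1}}T_{s_1}$ and $T_{\gamma(w)}=T_{s_1}T_{\gamma(w^{(1)})}$, which follow from property (iii) applied to $w$. Uniqueness of $\Phi$ as an $R$-linear map is automatic once its values on the basis $(T_w)_{w\in W}$ are prescribed.

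The main obstacle is precisely the reassembly in the inductive step: one must recognise an expression of shape $T_{\gamma(w^{(1)})^{-1}}T_{\gamma(w^{(1)})}T_{\rho(w^{(1)})}T_{s_1}^2$ (or a cousin thereof) as the target $T_{\gamma(w)^{-1}}T_{\gamma(w)}T_{\rho(w)}$, which forces one to track how the extra factors of $T_{s_1}$ interact with the already-reduced factors corresponding to $\gamma(w^{(1)})^{\pm 1}$ and $\rho(w^{(1)})$. This is where the right-angled combinatorics of Section~\ref{sec:combinatorics-hecke-algebra}, and specifically Proposition~\ref{prop:racs-cancellation} together with Theorem~\ref{thm:skipping-cancellation-normal-form}, is doing the essential work: the $\leq_S$-minimal choice of $s_1$ at each stage, combined with the requirement that $w^{(1)}=s_1 w s_1$ be a genuine cyclic reduction, forces enough commutation of $s_1$ with the relevant letters of $\gamma(w^{(1)})$ and $\rho(w^{(1)})$ so that the Hecke-algebraic products in question collapse cleanly into the claimed closed form, without spurious correction terms arising from unintended cancellations in the multiplication of basis elements.
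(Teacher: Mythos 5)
Your construction of $\gamma$ and $\rho$ via the recursion $w \mapsto s_1 w s_1$, the verification of (i)--(iii) by a length count, and the uniqueness argument are all fine. Note, however, that the paper's own proof of the ``furthermore'' part is entirely different from your induction: it does not use Lemma~\ref{lem:cyclic-reduction-order-irrelevant} at all, but observes that $T_{\gamma(w)^{-1}}T_{\gamma(w)}T_{\rho(w)} = T_{\gamma(w)^{-1}}T_{\gamma(w)\rho(w)}$ is supported on $\mathrm{Prod}(\gamma(w)^{-1},\gamma(w)\rho(w))$, which consists of minimal length elements by Proposition~\ref{prop:minimal-representatives-hecke-conjugation}, and then appeals directly to the definition of $\Phi$.

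More importantly, the step you yourself single out as ``the main obstacle'' is a genuine gap: the commutation you claim is ``forced'' by the right-angled combinatorics is not there, and the reassembly cannot be carried out. Your inductive step needs, in effect, to move the factor $T_{s_1}^2$ from the right of $T_{\gamma(w^{(1)})}T_{\rho(w^{(1)})}$ (where Lemma~\ref{lem:cyclic-reduction-order-irrelevant} and the inductive hypothesis place it) to its left (where the target expression $T_{\gamma(w^{(1)})^{-1}}T_{s_1}^2T_{\gamma(w^{(1)})}T_{\rho(w^{(1)})}$ requires it). Already the smallest example defeats this. Take $W = \langle s,t \rangle$ the infinite dihedral group, which is right-angled, and $w = sts$, so that $\gamma(w) = s$, $\rho(w) = t$ and $w^{(1)} = t$. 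Then, directly from Notation~\ref{not:cyclic-reduction-map},
\begin{gather*}
\Phi(T_{sts}) = T_t T_s^2 = a_s T_{ts} + b_s T_t ,
\qquad
T_{\gamma(w)^{-1}}T_{\gamma(w)}T_{\rho(w)} = T_s^2 T_t = a_s T_{st} + b_s T_t ,
\end{gather*}
and these are distinct elements of $\cH$ whenever $a_s \neq 0$, since $st \neq ts$. So the exact identity you are trying to prove fails as an equality in $\cH$ (and your argument could not have closed); $s$ does not commute with $\rho(w)=t$ here, even though $sts \mapsto t$ is a genuine cyclic reduction. What does survive, and what Theorem~\ref{thm:delocalised-trace-existence} actually uses, is that both sides are supported on minimal length elements (Proposition~\ref{prop:minimal-representatives-hecke-conjugation} for the right-hand side, Remark~\ref{rem:cyclic-reduction-map-basics} for the left) and represent the same class $T_w + [\cH,\cH]$ in the cocentre, so that $\Sigma_\cO$ takes the same value on both for every conjugacy class $\cO$. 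If you want to rescue your induction, that weaker statement is the one that propagates through the inductive step, precisely because $\Sigma_\cO \circ \Phi$ is a trace by Theorem~\ref{thm:delocalised-trace-identification} and is therefore insensitive to the order of the factors $T_{s_1}^2$ and $T_{\gamma(w^{(1)})}T_{\rho(w^{(1)})}$.
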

\begin{proof}
  Since every element in $\mathrm{Prod}(\gamma(w)^{-1}, \gamma(w)\rho(w))$ is of minimal length in its conjugacy class by Proposition~\ref{prop:minimal-representatives-hecke-conjugation}, the claim follows from the definition of $\Phi$.
\end{proof}

\begin{theorem}
  \label{thm:delocalised-trace-existence}
  Let $(W,S)$ be a right-angled, hyperbolic Coxeter system and let $\cH$  be the Iwahori-Hecke algebra of type $(W,S)$ with deformation parameter $(q_s)_{s \in S} \in \RR_{> 0}^S$.  Let $\cS'(W, S, q), \cS(W, S, q) \subseteq \Cstarred(W, S, q)$ be smooth subalgebras, such that $\cS'(W, S, q)$ is an unconditional completion of $\CC(W, S, q)$ and the natural quasi-derivation
  \begin{gather*}
    \Delta\colon T_w \mapsto \sum_{\substack{w = w_1w_2 \\ \text{ reduced decompositon}}} T_{w_1} \ot T_{w_2}
  \end{gather*}
  continuously extends to a map into the unconditional tensor product $\cS(W, S, q) \lra \cS'(W, S, q) \ot \cS'(W, S, q)$.  Then for every conjugacy class $\cO \subseteq W$ the trace $\vphi_\cO$ continuously extends to $\cS(W, S, q)$.
\end{theorem}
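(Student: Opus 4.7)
The plan is to factor $\vphi_\cO$ through the quasi-derivation, reducing the continuous extension to the construction of a continuous linear functional on the target of $\Delta$.  Specifically, I would construct a continuous linear functional
\[
  \tilde\vphi_\cO\colon \cS'(W,S,q)\ot\cS'(W,S,q)\lra\CC
\]
on the unconditional tensor product such that $\vphi_\cO = \tilde\vphi_\cO \circ \Delta$ holds on the dense subalgebra $\CC(W,S,q)$.  Since $\Delta$ extends continuously to $\cS(W,S,q)\lra\cS'(W,S,q)\ot\cS'(W,S,q)$ by hypothesis, the composition then yields the desired continuous extension of $\vphi_\cO$ to $\cS(W,S,q)$.

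For the construction, I would set, on basis elements,
\[
  \tilde\vphi_\cO(T_x\ot T_y) = \frac{\vphi_\cO(T_{xy})}{N(xy)} \quad\text{when } |x|+|y|=|xy|,
\]
and $0$ otherwise, where $N(w)$ is the number of pairs $(w_1,w_2) \in W\times W$ with $w_1 w_2 = w$ and $|w_1|+|w_2|=|w|$.  Since $\Delta(T_w)$ is precisely the sum of $T_{w_1}\ot T_{w_2}$ over those $N(w)$ reduced decompositions, each contributing $\vphi_\cO(T_w)/N(w)$, the identity $\tilde\vphi_\cO(\Delta(T_w)) = \vphi_\cO(T_w)$ holds by direct computation and extends by linearity to $\CC(W,S,q)$.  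The essential quantitative input is a uniform bound $|\vphi_\cO(T_w)|\leq M_\cO$, with $M_\cO$ independent of $w$: by Lemma~\ref{lem:cyclic-reduction-hecke-algebra} one has $\Phi(T_w) = T_{\gamma(w)^{-1}} T_{\gamma(w)\rho(w)}$, by Proposition~\ref{prop:minimal-representatives-hecke-conjugation} every term in the product set $\mathrm{Prod}(\gamma(w)^{-1}, \gamma(w)\rho(w))$ is of minimum length in its conjugacy class, and by the hyperbolic estimate in Corollary~\ref{cor:hecke-multiplication-combinatorics} the number of such terms of length equal to the minimum length of $\cO$ is bounded by a constant depending only on $(W,S)$; each Hecke coefficient is further bounded by $\prod_{s\in S} p_s$.

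The main obstacle will be to convert the pointwise estimate $|\tilde\vphi_\cO(T_x\ot T_y)|\leq M_\cO/N(xy)$, supported on reduced pairs, into continuity of $\tilde\vphi_\cO$ against the cross-norm seminorms of the unconditional tensor product $\cS'(W,S,q)\ot\cS'(W,S,q)$.  Unconditionality reduces this to a weighted summability problem in the absolute values of coefficients, and the normalising factor $1/N(w)$ exactly offsets the multiplicity of reduced decompositions of any given $w$, so that the total mass placed by $\tilde\vphi_\cO$ on the fibre above $w$ is uniformly bounded by $M_\cO$.  Combined with the Sobolev-type decay of the seminorms on $\cS'(W,S,q)$ supplied by Corollary~\ref{cor:hecke-schwartz-algebra-continuous-quasi-derivation} and the polynomial bounds on reduced factorisations from Section~\ref{sec:combinatorics-hecke-algebra}, one obtains the required cross-norm estimate.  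Once continuity of $\tilde\vphi_\cO$ is in hand, the identity $\vphi_\cO = \tilde\vphi_\cO\circ\Delta$ on the dense subalgebra $\CC(W,S,q)$ propagates to $\cS(W,S,q)$ and yields the claimed extension.
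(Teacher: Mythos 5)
Your high-level strategy --- factoring $\vphi_\cO$ through the quasi-derivation $\Delta$ and reducing everything to continuity of a linear functional on the unconditional tensor product --- is the same as the paper's, and your uniform bound $|\vphi_\cO(T_w)| \leq M_\cO$ (via Lemma~\ref{lem:cyclic-reduction-hecke-algebra}, Proposition~\ref{prop:minimal-representatives-hecke-conjugation} and the hyperbolicity estimate in Corollary~\ref{cor:hecke-multiplication-combinatorics}) is correct. But the specific functional $\tilde\vphi_\cO$ you define is \emph{not} continuous on $\cS'(W,S,q) \ot \cS'(W,S,q)$, so the step you flag as ``the main obstacle'' genuinely fails. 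The mechanism you propose --- that the factor $1/N(w)$ offsets the multiplicity of reduced decompositions, keeping the mass on each fibre bounded by $M_\cO$ --- only applies to tensors of the special form $\Delta(a)$, whose coefficients are constant along each fibre $\{(w_1,w_2) : w_1w_2 = w \text{ reduced}\}$; a general tensor can concentrate all its mass on one decomposition of each of exponentially many group elements, and bounded mass per fibre does not survive summation over exponentially many fibres. This already fails on elementary tensors: take $W$ a free product of four copies of $\ZZ/2\ZZ$ with generators $s,t_1,t_2,t_3$, let $\cO$ be the conjugacy class of $s$, and let $u_n = \sum_x T_{xsx^{-1}}$ summed over the $3\cdot 2^{n-1}$ reduced words $x$ of length $n$ in $t_1,t_2,t_3$. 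One checks $\vphi_\cO(T_{xsx^{-1}}) = 1$ and $N(xsx^{-1}) = 2n+2$ for each such $x$, so $\tilde\vphi_\cO(u_n \ot T_e) = \frac{3\cdot 2^{n-1}}{2n+2}$, while every cross-seminorm built from the Sobolev-type seminorms of $\cS'(W,S,q)$ gives $\|u_n \ot T_e\| = \|u_n\|_{2,\sigma}\|T_e\|_{2,\sigma} \leq (3 \cdot 2^{n-1})^{1/2}(2n+2)^{\sigma}$. The quotient is unbounded: an $\ell^1$-type sum over exponentially many terms with only polynomial damping cannot be controlled by $\ell^2$-type Sobolev norms. (This is unavoidable for \emph{any} functional vanishing outside reduced pairs and satisfying $\tilde\vphi_\cO \circ \Delta = \vphi_\cO$ whose support meets every fibre in a set of density bounded below, so the defect is structural, not a matter of sharpening estimates.)

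The repair is to concentrate, rather than spread, the value $\vphi_\cO(T_w)$: use only the single distinguished reduced decomposition $w = (\gamma(w)\rho(w))\cdot \gamma(w)^{-1}$ from Lemma~\ref{lem:cyclic-reduction-hecke-algebra}. This is exactly what the paper does, phrased as a map rather than a functional: unconditionality lets one pass from $\Delta$ to the coefficient-wise dominated map $\Delta'(T_w) = T_{\gamma(w)\rho(w)} \ot T_{\gamma(w)^{-1}}$, which is therefore continuous $\cS(W,S,q) \lra \cS'(W,S,q) \ot \cS'(W,S,q)$; composing with the tensor flip, the multiplication of the unconditional tensor product (continuous by Puschnigg's lemma) and the coefficient functional $\Sigma_\cO$ recovers $\Sigma_\cO \circ \Phi = \vphi_\cO$. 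In your language, the corrected functional $\tilde\vphi_\cO(T_x \ot T_y) = \Sigma_\cO(T_yT_x)$ on the distinguished pairs $(x,y) = (\gamma(w)\rho(w), \gamma(w)^{-1})$ and $0$ elsewhere still satisfies $\tilde\vphi_\cO \circ \Delta = \vphi_\cO$ (exactly one summand of $\Delta(T_w)$ survives), and its matrix has at most $|\cO_{\min}|$ nonzero entries in each row and each column, all bounded by $M_\cO$, so Schur's test yields the cross-norm estimate your version lacks.
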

\begin{proof}
  Let us first observe that the map $\Sigma_\cO$ summing over the finitely many coefficients of elements of minimal length in $\cO$ extends to a continuous map on $\cS'(W, S, q)$, as $\cS'(W, S, q)$ is an unconditional completion of $\CC(W, S, q)$.  Consider maps $\gamma, \rho\colon W \lra W$ defined as in Lemma~\ref{lem:cyclic-reduction-hecke-algebra}.  Since $\Delta$ extends continuously to a map $\cS(W, S, q) \lra \cS'(W, S, q) \ot \cS'(W, S, q)$ and since $\cS'(W, S, q)$ is an unconditional completion of $\CC(W, S, q)$, the map $\Delta'\colon \cS(W, S, q) \lra \cS'(W, S, q) \ot \cS'(W, S, q)$ satisfying $\Delta'(T_w) = T_{\gamma(w) \rho(w)} \ot T_{\gamma(w)^{-1}}$ is well-defined and continuous.  Let $\Phi\colon \CC(W, S, q) \lra \CC(W, S, q)$ be the unique linear map satisfying $\Phi(T_w) = T_{\gamma(w)^{-1}} T_{\gamma(w)} T_{\rho(w)}$.  Then we have $\vphi_\cO = \Sigma_\cO \circ \Phi$ as shown in Theorem~\ref{thm:delocalised-trace-identification}.  So a continuous extension of $\vphi_\cO$ to $\cS(W, S, q)$ is obtained by the composition
  \begin{multline*}
    \cS(W, S, q) \stackrel{\Delta'}{\lra}
    \cS'(W, S, q) \ot \cS'(W, S, q) \stackrel{\text{tensor flip}}{\lra} \\
    \cS'(W, S, q) \ot \cS'(W, S, q) \stackrel{\text{multiplication}}{\lra}
    \cS'(W, S, q) \stackrel{\Sigma_\cO}{\lra}
    \CC
    \eqcomma
  \end{multline*}
This finishes the proof.
\end{proof}

We now give a proof of the simplification of Theorem~\ref{thm:delocalised-trace-existence} stated in the introduction.
\begin{proof}[Proof of Theorem~\ref{introthm:Schwartz-algebra-existence}]
  It suffices to note that the assumptions of Theorem~\ref{thm:delocalised-trace-existence} are satisfied thanks to Corollary~\ref{cor:hecke-schwartz-algebra-continuous-quasi-derivation}.
\end{proof}

\begin{definition}
  \label{def:delocalised-traces}
  Let $(W, S)$ be right-angled, hyperbolic Coxeter system and let $q \in \RR_{> 0}^S$ be a deformation parameter.  Given a conjugacy class $\cO \subseteq W$, we call a continuous extension of $\vphi_\cO$ to a smooth subalgebra of $\Cstarred(W, S, q)$ a delocalised trace associated to $\cO$.
\end{definition}

Before proving Theorem~\ref{thmintro:pairing}, let us clarify its context and notation beyond what we have done in the introduction already.  Recall from \cite[Theorem A.2.1]{bost1990-principe-oka} that the inclusion of a smooth subalgebra into a unital C*-algebra induces an isomorphism in K-theory.   We also recall the K-theory calculations for right-angled Hecke C*-algebras from \cite[Section 4]{raumskalski2022}.  To this end we use the description of conjugacy classes of finite order elements from Remark~\ref{rem:finite-order-element-cliques}.  Denote by $\mathrm{Cliq}(\Gamma)$ the set of cliques in the commutation graph $\Gamma$ of a right-angled Coxeter system $(W, S)$, that is subsets $C \subseteq S$ such that $st = ts$ for all $s,t \in C$.  For a clique $C \in \mathrm{Cliq}(\Gamma)$, we write $p_C = \prod_{s \in C} \chi_1^s$.  Here $\chi_1^s = \pi_s(\chi_1)$ denotes the image of the projection $\chi_1 \in \CC[\ZZ/2]$ associated with the trivial representation under the linear map extending the group homomorphism $\pi_s\colon \ZZ/2 \lra \CC(W, S, q)^\times$ which satisfies $\pi_s(1) = \frac{2}{q_s^{1/2} + q_s^{-1/2}}T_s + \frac{q_s^{-1/2} - q_s^{1/2}}{q_s^{1/2} + q_s^{-1/2}}$.  Then for every deformation parameter $q \in \RR_{> 0}^S$ the map $\mathrm{Cliq}(\Gamma) \lra \rK_0(\Cstarred(W, S, q))\colon C \mapsto [p_C]$ induces an isomorphism $\ZZ^{\mathrm{Cliq}(\Gamma)} \lra \rK_0(\Cstarred(W, S, q))$.
\begin{proof}[Proof of Theorem~\ref{thmintro:pairing}]
  As described above, for $D \in \mathrm{Cliq}(\Gamma)$ we have $p_D = \prod_{s \in D} \chi_1^s$ where $\chi_1^s$ is the projection associated with the trivial representation of the unitary representation $\ZZ/2 \mapsto \CC(W, S, q)$ mapping $1$ to $\frac{2}{q_s^{1/2} + q_s^{-1/2}}T_s + \frac{q_s^{-1/2} - q_s^{1/2}}{q_s^{1/2} + q_s^{-1/2}}$.  We have $\chi_1^s  = \mathbb{1}_{(0,\infty)}(T_s) = \frac{1}{q_s^{1/2} + q_s^{-1/2}} (T_s + q_s^{-1/2}) =\frac{1}{q_s^{1/2} + q_s^{-1/2}} T_s + \frac{1}{1 + q_s}$.  Writing $T_D = \prod_{s \in D} T_s$, we find that
  \begin{gather*}
    p_D = \sum_{C \subseteq D} \bigl ( \prod_{s \in C} \frac{1}{q_s^{1/2} + q_s^{-1/2}} \bigr ) \bigl ( \prod_{s \in D \setminus C} \frac{1}{1 + q_s} \bigr )  T_C
    \eqstop
  \end{gather*}
  Since all elements in the support of $p_D$ have minimal length and are pairwise non-conjugate, we find that
  \begin{gather*}
    \vphi_C(p_D)
    =
    \Sigma_C(p_D)
    =
    \begin{cases}
      0 & C \nsubseteq D \eqcomma \\
      \prod_{s \in C} \frac{1}{q_s^{1/2} + q_s^{-1/2}} \prod_{s \in D \setminus C} \frac{1}{1 + q_s} & C \subseteq D \eqstop
    \end{cases}
  \end{gather*}
  Now the claimed formula for $\vphi_C(p_D)$ follows.

  Let us now prove faithfulness of the pairing.  Let $a = \sum_{D \in \mathrm{Cliq}(\Gamma)} a_D [p_D]$ be an element in $\rK_0(\Cstarred(W, S, q))$ such that $\vphi_C(a) = 0$ for all $C \in \mathrm{Cliq}(\Gamma)$.  Assuming for a contradiction that $a \neq 0$, we may take $D \in \mathrm{Cliq}(\Gamma)$ maximal with the property that $a_D \neq 0$.  Then
  \begin{gather*}
    0 = \vphi_D(a) = \vphi_D(a_D p_D) = a_D \prod_{s \in D} \frac{1}{q_s^{1/2} + q_s^{-1/2}}
    \eqcomma
  \end{gather*}
  which is a contradiction.  Conversely, let $\vphi = \sum_{C \in \mathrm{Cliq}(\Gamma)} b_C \vphi_C$ be a finite $\RR$-linear combination of delocalised traces associated with finite order elements and assume that $\vphi(a) = 0$ for all $a \in \rK_0(\Cstarred(W, S, q))$.  Suppose then that $\vphi \neq 0$ and let $C \in \mathrm{Cliq}(\Gamma)$ be minimal with $b_C \neq 0$.  Then, in analogy to the calculation before, we find that
    \begin{gather*}
      0 = \vphi([p_C]) = b_C \vphi_C(p_C) = b_C \prod_{s \in C} \frac{1}{q_s^{1/2} + q_s^{-1/2}}
      \eqstop
  \end{gather*}
  This is a contradiction, finishing the proof of the theorem.
\end{proof}


\bibliography{delocalised-traces-right-angled-hyperbolic-hecke-algebras}
\bibliographystyle{plain}



\vspace{2em}

\noindent \begin{minipage}{1.0\linewidth}
  \small
  Piotr Nowak, Institute of Mathematics of the Polish Academy of Sciences, ul. Sniadeckich 8, 00-656 Warszawa, Poland \\
  E-mail address: pnowak@impan.pl
\end{minipage}

\vspace{1em}

\noindent \begin{minipage}[t]{\linewidth}
  \small
  Sanaz Pooya, Institute of Mathematics, University of Potsdam, Campus Golm, Haus 9, Karl-Liebknecht-Str. 24-25, Germany \\
  E-mail address: sanaz.pooya@uni-potsdam.de
\end{minipage}

\vspace{1em}

\noindent \begin{minipage}[t]{\linewidth}
  \small
  Sven Raum, Institute of Mathematics, University of Potsdam, Campus Golm, Haus 9, Karl-Liebknecht-Str. 24-25, Germany \\
  E-mail address: sven.raum@uni-potsdam.de
\end{minipage}

\vspace{1em}

\noindent \begin{minipage}{1.0\linewidth}
  \small
  Adam Skalski, Institute of Mathematics of the Polish Academy of Sciences, ul. Sniadeckich 8, 00-656 Warszawa, Poland \\
  E-mail address: a.skalski@impan.pl
\end{minipage}

\end{document}